\definecolor{webgreen}{rgb}{0,.5,0}
\definecolor{webbrown}{rgb}{.6,0,0}
\newcommand{\seqnum}[1]{\href{http://oeis.org/#1}{\ul{#1}}}
\newcommand{\ul}{\underline}
\newcommand{\ol}{\overline}
\begin{document}

\theoremstyle{plain}
\newtheorem{theorem}{Theorem}
\newtheorem{corollary}[theorem]{Corollary}
\newtheorem{lemma}[theorem]{Lemma}
\newtheorem{proposition}[theorem]{Proposition}
\newtheorem{obs}[theorem]{Observation}

\theoremstyle{definition}
\newtheorem{definition}[theorem]{Definition}
\newtheorem{example}[theorem]{Example}
\newtheorem{conjecture}[theorem]{Conjecture}

\theoremstyle{definition}
\newtheorem{remark}[theorem]{Remark}
\begin{center}
\vskip 1cm

{\LARGE\bf Castling tree of tight Dyck nests with applications to odd and middle-levels graphs}
\vskip 1cm
\large
Italo J. Dejter\\
University of Puerto Rico\\
Rio Piedras, PR 00936-8377\\
\href{mailto:italo.dejter@gmail.com}{\tt italo.dejter@gmail.com} \\
\end{center}

\begin{abstract}\noindent A subfamily of Dyck words called tight Dyck words is seen to correspond, via a ``castling'' procedure, to the vertex set of an ordered tree $T$. From $T$, a ``blowing'' operation recreates the whole family ol Dyck words. The vertices of $T$ can be elementarily  updated all along $T$. This simplifies an edge-supplementary arc-factorization view of Hamilton cycles of odd and middle-levels graphs found by T. M\"utze et al. This takes into account that the Dyck words represent: {\bf(a)} the cyclic and dihedral vertex classes of odd and middle-levels graphs, respectively, and {\bf(b)} the cycles of their 2-factors, as found by T. M\"utze et al. 
\end{abstract}

\section{Introduction}\label{section1}
 
For $0<k<n\in\mathbb{Z}$, the {\it Kneser graph} $K_{n,k}$ \cite{Hcs,namrata} has as its vertices all $k$-subsets of the set $[0,n-1]=\{0,1,\ldots,n-1\}$, with any two such vertices adjacent if and only if their intersection as $k$-subsets of $[0,n-1]$ is empty. For $n=2k+1$, $K(n,k)$ is the {\it odd graph} $O_k$. We represent the vertices of $ O_k$ by the
{\it characteristic vectors} of those $k$-subsets of $[0,n-1]$, so that each such $k$-subset is the {\it support} of its characteristic vector.

Those vectors represent the members of the $k$-level $L_k$ of the Boolean lattice $B_{2k+1}$ on $[0,2k]$. The complements of the reversed strings of those vectors are taken to represent the $(k+1)$-level $L_{k+1}$ of $B_{2k+1}$. The union of these two levels, $L_k\cup L_{k+1}$, is the vertex set of the {\it middle-levels graph} $M_k$~\cite{Havel}, with adjacency given by the inclusion of $ L_k$ in $L_{k+1}$. This yields a 2-covering graph map $\Psi_k:M_k\rightarrow O_k$ expressible via the {\it reversed complement} bijection $\aleph:V(M_k)\rightarrow V(M_k)$ given by $\aleph(v_0v_1\cdots v_{2k-1}v_{2k})=\bar{v}_{2k}\bar{v}_{2k-1}\cdots\bar{v}_1\bar{v}_0$, where $\bar{0}=1$ and $\bar{1}=0$, so that $\aleph(L_k)=L_{k+1}$ and $\aleph(L_{k+1})=L_k$, and extending to a graph automorphism of $M_k$, again denoted w.l.o.g. by $\aleph$.
In fact, $\Psi:M_k\rightarrow O_k$ is characterized by its restriction to the identity map over $L_k$ and by its restriction to $\aleph$ over $L_{k+1}$.
  
Odd graphs were shown to be Hamiltonian in \cite{Hcs}, as an initial step in proving that sparse Kneser graphs were Hamiltonian. And this result was extended in \cite{namrata} to all Kneser graphs.

 More specifically,
an investigation of relations of Dyck words~\cite{Hcs} controlled by the infinite ordered tree $T$ of {\it restricted-growth strings} (~\cite[p. 325]{Arndt}) that are {\it tight} (see definitions in Section~\ref{section2}), to the graphs $O_k$ and $M_k$
 is undertaken. A definition of $T$ is developed in Section~\ref{s2&half}, while {\it Dyck nests} and {\it tight Dyck nests} are introduced in Section~\ref{section3} associated to corresponding Dyck words and {\it tight} Dyck words via their {\it Dyck path} heights. As said, a {\it castling} procedure assigning tight Dyck nests to the vertices of $T$ is introduced in Section~\ref{sec4}. 

Thus, the main objective of the present work is to apply the tree $T$ (Section~\ref{s2&half}) of tight restricted-growth strings, or TRGSs, (Section~\ref{section2}) lexicographically ordered by the nonnegative integers $n$ to the Dyck words and nests of Section~\ref{section3}, so that to each TRGS $b(n)$ corresponds a specific Dyck nest $f(n)$, and to each child of $T$ corresponds the castling (Section~\ref{sec4}) of its parent node. 
 
 Sections~\ref{section5}-\ref{section6} introduce how to {\it blow} and {\it anchor} these words and nests, adapting them as vertex representatives of the graphs $O_k$ and $M_k$. In Section~\ref{arcf}, an edge-supplementary arc-factorization \cite{DO} of each odd graph based on those representatives is given. 
 
Based on the work of M\"utze et al. \cite{u2f,Hcs}, a permutation is assigned in Section~\ref{section8} to each Dyck nest $F$, leading Section~\ref{1f} to establish uniform 2-factors both in the $O_k$ and the $M_k$, as in \cite{u2f}. 
The said uniform 2-factors yield a partition of $V(O_k)$ (Section~\ref{2f}), and thus a partition of $V(M_k)$, too. However, another partition can be used, instead, based on the action of the cyclic and the dihedral group on $V(O_k)$ and $V(M_k)$, respectively. 

In Section~\ref{sign}, we assign a {\it clone} to each Dyck nest $F$. This clone is equivalent to $F$ (Theorem~\ref{signest}). 
 Moreover, the clone is universally updated along $T$ (Theorem~\ref{id}).
Thus, it generates an infinite sequence of updates; see also \cite{faltaba}. 

A convenient set of strings is established in Section~\ref{conv}, allowing to associate the elements of the graphs $O_k$ and $M_k$,
as arising from the tree $T$ (Theorems~\ref{alfin}-\ref{fidel} and Corollary~\ref{coro}) based on either of the two mentioned partitions.
This allows  
the cited arc-factorization approach to take to the Hamilton cycles of odd graphs and middle-levels graphs of \cite{Hcs} (Section~\ref{Ham}, Theorem~\ref{L6}).

In sum, our approach differs from that of \cite{Hcs} by reversed complementation, so it constitutes an alternative reinterpretation of \cite{Hcs}. That far can be said as for a comparison of this work with that of \cite{Hcs}, as we just have provided a universal  treatment for all odd and middle-levels graph that \cite{Hcs} did not. As our treatment differs from that of \cite{Hcs} just by reversed complementation, adapting \cite{Hcs} to our approach did not offer any additional improvement in time complexity. About the time complexity of castling, see after Example~\ref{catalan}.

\section{Tight restricted-growth strings}\label{section2}

\begin{definition}\label{prec} Given two nonnegative-integer strings $A$ and $B$, of lengths $n>0$ and $\ell\ge n$, respectively, with $B\ne A$ and $B=d_{\ell -1}d_{\ell -2}\cdots d_2d_1$, where $\ell\ge n$, we say that $A$ {\it precedes} $B$ if the concatenated string $0^{\ell -n}|A=c_{\ell -1}c_{\ell -2}\cdots c_2c_1$ has $c_i\le d_i$, for $0<i<\ell $.\end{definition}

\begin{definition} A set of nonnegative-integer strings is said to be {\it lexicographically ordered} if the precedence criterion of Definition~\ref{prec} holds for all its pairs of elements.
\end{definition}

\begin{definition} 
A {\it tight restricted-growth string}, or {\it TRGS} is a nonnegative-integer string of the form
\begin{eqnarray}\label{chris}a_{k-1}a_{k-2}\cdots a_2a_1,\end{eqnarray}
where either $$k>2,\; a_{k-1}=1\mbox{ and }\;a_{j+1}\ge a_j+1,\mbox{ for }k-1>j>0,$$ or $$k=2\mbox{ and }a_1\in\{0,1\}.$$ 
\end{definition}

Let ${\mathcal S}$ be the lexicographically-ordered sequence of TRGS's. Then, ${\mathcal S}$ is the infinite extension of the sequence \cite[\seqnum{A239903}]{oeis}  (finite, as it only uses decimal digit entries $a_j$, while in ${\mathcal S}$ the digit entries $a_j$ grow unbounded). 
 
 \begin{example} ${\mathcal S}$ starts as ${\mathcal S}=(b(0),b(1),b(2),\ldots,b(17),\ldots)=$
\begin{eqnarray}\label{(1)}(0,1,10,11,12,100,101,110,111,112,120,121,122,123,1000,1001,1010,1011,\ldots).\end{eqnarray}
\end{example}

The following definition yields the original notion of restricted-growth string in~\cite[p. 325]{Arndt}.

\begin{definition}\label{def3} A {\it restricted-growth string}, or {\it RGS}, is defined as either a TRGS $b(n)$ or obtained from a TRGS $b(n)$ by prefixing to $b(n)$ a finite string of zeros, e.g. $0^\ell | b(n)$ is an RGS if $0\le\ell\in\mathbb{Z}$ and $b(n)\in{\mathcal S}$.  
\end{definition}

Note that no TRGS starts with 0 if it contains more than one entry. On the other hand, 
01 is an RGS, not an TRGS. 

\section{Tree of tight restricted-growth strings}\label{s2&half}

\begin{definition}\label{mathbbn} Let $\mathbb{N}$ be the set of nonnegative integers.
Let $n\in\mathbb{N}$ and let $\gamma(n)\in[0,k-1]$ be the right-to-left position of the rightmost nonzero entry of $b(n)$ (that is: counting positions from right to left, like the subindices in (\ref{chris})). 
\end{definition} 

\begin{example}
Definition~\ref{mathbbn} yields a sequence $\Gamma=(\gamma(n);0<n\in\mathbb{Z})$ that starts, accompanying ${\mathcal S}\setminus b(0)$, as follows:\\ $\Gamma=(\gamma(1),\ldots,\gamma(17),\ldots)=$ $(1,2,1,1,3,1,2,1,1,2,1,1,1,4,1,2,\ldots)$, or longer, up to $\gamma(43)$:
\begin{eqnarray}\label{(2)}^{\Gamma=(1,2,1,1,3,1,2,1,1,2,1,1,1,4,1,2,1,1,3,1,2,1,1,2,1,1,1,3,1,2,1,1,2,1,1,1,2,1,1,1,1,5,\ldots).}\end{eqnarray}
\end{example}

\begin{definition}\label{treee} For each $0<n\in\mathbb{N}$, let us define a string $c(n)\in{\mathcal S}$ as follows. Express
\begin{eqnarray}\label{yu}\begin{array}{llll}
b(n)=a_{k-1}a_{k-2}\cdots\; a_ja_{j-1}\cdots\!    &\!\cdots a_2a_1,&\mbox{as in (\ref{chris})}, &\mbox {with } j=\gamma(n)\mbox{ as in (\ref{(2)}), and let}\\
c(n)=a_{k-1}a_{k-2}\cdots(a_j-1)a_{j-1}\!&\!\cdots a_2a_1,&&\mbox{unless }j=k-1,\mbox{ in which case}\\
c(n)=b(0)=a_1=0\in\mathcal{S}\mbox{ is the}&\mbox{TRGS}&\mbox{for the}&\mbox{RGS }a_{k\-1}\cdots a_1=0^{k-1}.\end{array}\end{eqnarray}
Let $T$ be the tree obtained by taking $c(n)$ to be the parent of $b(n)$ in $T$, so $T$ is given by
\begin{eqnarray}\label{tree}T=\{{\mathcal S},E(T)\},\mbox{ with }(c(n),b(n))\in E(T),\forall n>0.\end{eqnarray}
\end{definition}

\begin{definition}\label{rho} For each $0<n\in\mathbb{N}$, we have that
$c(n)=b(m)$, for some $m=\rho(n)\in\mathbb{N}$, ($m<n$), where $\rho$ is the {\it  parent} function $\rho:\mathbb{N}\rightarrow \mathbb{N}$ of $T$. This yields the {\it parent sequence} $\rho(1)=0$, $\rho(2)=0$, $\rho(3)=2$, $\rho(4)=3$, $\rho(5)=0$, $\rho(6)=5,$ $\rho(7)=5$, etc., for the edges $(0,1)$, $(0,10)$, $(10,11)$, $(11,12)$, $(0,100)$, $(100,101)$, $(100,110)$, etc.
\end{definition}

\begin{example} Accompanying display (\ref{(2)}), $\rho(\mathbb{N}\setminus\{0\})=\rho(\{1,2,\ldots,43,\ldots\})$ is:
\begin{eqnarray}\label{(3)}_{(0, 0, 2, 3, 0, 5, 5, 7, 8, 7, 10, 11, 12, 0, 14, 16, 17, 14, 19, 19, 20, 21, 19, 24, 25, 26, 14, 28, 28, 30, 31, 30, 33, 34, 35, 33, 37, 38, 39, 40, 0,\ldots).}\end{eqnarray}
Figure~\ref{fig1} further exemplifies Definitions~\ref{treee}--\ref{rho} in the context of Section~\ref{sec4}.
\end{example}

\section{Dyck words, Dyck nests and blowing}\label{section3}

We just have seen that to each $n\in\mathcal{N}$ corresponds a TRGS that constitutes a vertex of the tree $T$, and that $V(T)$ is formed by such TRGSs. In Section~\ref{sec4}, we will see that $V(T)$ is the domain of a {\it castling} correspondence whose image is formed by {\it Dyck nests}, that we introduce in this section.

\begin{definition}\label{def5} By a
 {\it Dyck word} we will understand any binary $2k$-string $f=f_1f_2\cdots f_{2k}$ of weight $k$ with the number of 0-bits at least equal to the number of 1-bits in each prefix of $f$, (which differs from~\cite{gmn,M,u2f,Hcs} just by binary complementation).
 The concept of {\it empty Dyck word} $\epsilon$ also makes sense in this context and is used in Section~\ref{Ham}.
\end{definition}

\begin{definition}\label{dp}  Each Dyck word $f\ne\epsilon$ as in Definition~\ref{def5} determines a {\it Dyck path}, given as a continuous piecewise-linear curve $g_f$ in the Cartesian plane $\mathbb{R}^2$ such that $g_f(0)=g_f(2k)=0$ with $g_f(x)>0$, for $0<x<2k$, and formed by replacing successively from left to right
each 0-bit of $f$ by an {\it up-step} and each 1-bit of $f$ by a {\it down-step}, where {\it up-steps} and {\it down-steps} are segments of the forms $(x,y)(x+1,y+1)$ and $(x,y)(x+1,y-1)$, respectively.
\end{definition}
 
\begin{definition}\label{dq} We assign the integers in the interval $[1,k]$ successively to the up-steps, respectively, down-steps, of $g_f$ in the horizontal unit layers $[y,y+1]\subset\mathbb{R}^2$, for $y=0,1,2,\ldots$, as needed, and {\it from right to left} at each layer. The resulting $2k$-string is said to be the {\it Dyck nest} $F=F_1F_2\cdots F_{2k}$ associated to $f=f_1f_2\cdots f_{2k}$.
\end{definition}

\begin{definition}\label{dr} A Dyck nest $F=\cdots F_{j-1}F_jF_{j+1}F_{j+2}\cdots=\cdots i(i+1)(i+1)i\cdots$, arising from a Dyck word $f=\cdots f_{j-1}f_jf_{j+1}f_{j+2}\cdots=\cdots 0011\cdots$, is said to be obtained by {\it blowing} the shorter Dyck nest $F'=\cdots F'_{j-1}F'_j\cdots=\cdots ii\cdots$, arising from a corresponding Dyck word $f=\cdots f'_{j-1}f'_j\cdots=\cdots 01\cdots$, where $F_{j-1}=F_{j+2}=F'_{j-1}=F'_j=i$, $F_j=F_{j+1}=i+1$, $f_{j-1}=f_j=f'_{j-1}=0$ and $f_{j+1}=f_{j+2}=f'_j=1$.
We also say that $F'$ is obtain by {\it reduction} from $F$, or that the corresponding $f'$ (with $F'=F(f')$) is obtained by {\it reduction} from $f$. In particular, each Dyck word and corresponding Dyck nest are equal or can be reduced to a TRGS and corresponding irreducible Dyck nest. 
\end{definition}

\begin{example} Figure~\ref{fig2} illustrates Definitions~\ref{dp}--\ref{dq}--\ref{dr}. In fact, for each one of the eight cases in the figure, a piecewise-linear curve $g_f$ is constructed iteratively that starts at the shown origin O in the Cartesian plane $\mathbb{R}^2$ by replacing successively the 0-bits and 1-bits of $f$ by {\it up-steps} and {\it down-steps}, namely diagonal segments $(x,y)(x+1,y+1)$ and $(x,y)(x+1,y-1)$, respectively.  We assign the integers in the interval $[0,k]$ in decreasing order (from $k$ to 0) to the up-steps, respectively down-steps, of $g_f$, from the top unit layer of $g_f$ in $\mathbb{R}^2$ to the bottom one and from left to right at each pertaining unit layer between contiguous lines $y,y+1\in\mathbb{Z}$.
Then, by reading and successively writing the number entries assigned to the steps of $g_f$, the $n$-tuple $F(f)$ is obtained. The figure is provided, underneath each instance, with the corresponding $f$ followed by  $F(f)$ and its (underlined) order of presentation via the castling procedure of Section~\ref{sec4}, below. We assume that all elements of $V(O_k)$ are represented by means of such piecewise-linear curves, for each fixed integer $k>0$.

\begin{figure}[htp]
\includegraphics[scale=0.88]{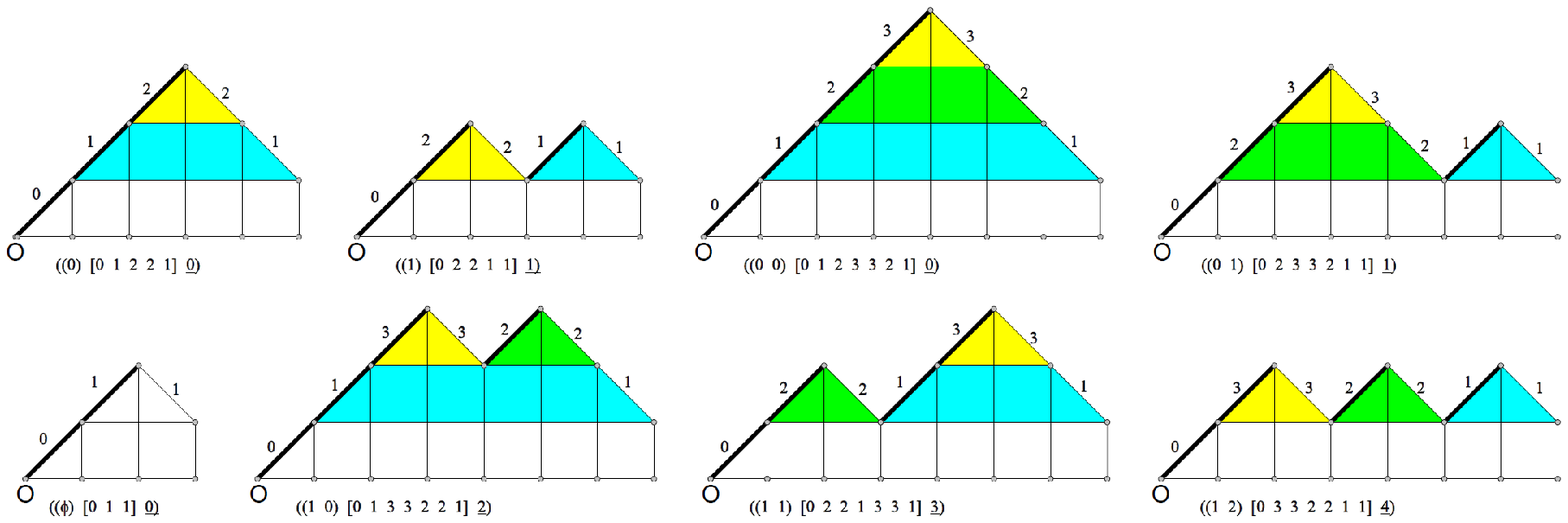}
\caption{Recovering $F(f)$ from $f$: $PLC(f)$ for triples (($f)$ $[F(f)]$, $\underline{ord(f)})$, $k=1,2,3$}
\label{fig2}
\end{figure}

\end{example}

\begin{definition}\label{tight} 
 The inverse operation to blowing, applied initially to a Dyck nest $F$ and iterated successively through $F',(F')',\ldots$ etc,, leads to a {\it tight} Dyck nest, meaning it cannot be further reduced to a shorter Dyck nest. Dyck words $f$ yielding tight Dyck nests $F$ will be said to be {\it tight}.
\end{definition}

Each Dyck word $f$ as in Definition~\ref{def5} determines a Dyck path $g_f$ as in Definition~\ref{dp} which in turn determines a Dyck nest $F(f)$ as in Definition~\ref{dq}.
If $f$ is tight, then $F(f)$ is irreducible, as  in Definition~\ref{dr}, or tight as in Definition~\ref{tight}. If not, then reduction takes $f$ into a tight $f'$, and $F= F(f)$ into an irreducible or tight Dyck nest $F'=F(f')$.

\begin{example} To the Dyck words $f=01,\; f=0011,\; f=000111,\; \ldots,\; f=000\cdots111,$ etc., correspond the respective nests
$F=11,\; F=1221,\; F=123321,\; \ldots,\; F=123\cdots321$, etc.
The first Dyck nest here, $F=11$, is the tight Dyck nest of each of its subsequent ones, and  each of these are obtained by {\it blowing} $F=11$.
\end{example}

\begin{example} Tight Dyck nest $F=2211$ (for tight Dyck word $f=0101$) is the tight Dyck nest of $F=233211$ (for $f=001101$), which is therefore obtained by blowing $F=2211$.
\end{example}

\begin{example} When lexicographically ordered, the tight Dyck nests start as follows, according to (\ref{(1)})-(\ref{(2)}) and (\ref{(3)}):
${\mathcal N}=(F(0),F(1),\ldots,F(13),\ldots)$, where $F(0),F(1),\ldots,F(13),\ldots$ are
\begin{eqnarray}\label{(4)}
_{11,2211,133221, 221331, 332211, 12443321, 24433211, 13324421, 24421331, 33244211, 14433221, 22144331, 33221441, 44332211,\ldots}
\end{eqnarray}
respectively.
The corresponding originating Dyck words here start the sequence of associated {\it tight} Dyck words:
${\mathcal W}=(f(0),f(1),\ldots,f(13),\ldots)$, where $f(0),f(1),\ldots,f(13),\ldots$ are
\begin{eqnarray}\label{(5)}_{01,0101,001011, 010011, 010101, 00010111, 00101101, 00100111, 00110011, 01001101, 00101011, 01001011, 01010011, 01010101,\ldots}\end{eqnarray}
respectively. \end{example}

\section{Castling procedure}\label{sec4}

Modifying Theorem 3.2 \cite{D2} or Theorem 2 \cite{D1} for our present purposes, we assign a Dyck word $f(n)=f_1f_2\cdots f_{2k}$ to each TRGS $b(n)=a_{k-1}\cdots a_1\in{\mathcal S}$  as in (\ref{chris})-(\ref{(1)}) in such a way that all such Dyck words become represented (once each) via uniquely corresponding $F(n)$'s generated via the procedure contained in the subsequent items (a)-(f), each $F(n)$ yielding its $f(n)$ by replacing each first appearance $j_1$ of an integer $j$ by a 0-bit and its second appearance $j_2$ of $j$ by a 1-bit, and using the function $\lambda:\mathbb{N}\rightarrow\mathbb{Z}$ given by $\lambda(n)=k$, ($n\in\mathbb{N}$). 
\begin{enumerate}
\item[(a)] set $n=1$ and set $F(0)=11$, with $\lambda(0)=k=2$.
\item[(b)] Let $\gamma(n)\in\Gamma$ as in (\ref{(2)}), let $(c(n),b(n))\in T$ as in (\ref{tree}), let $0^\ell c(n)=c'(n)=0^\ell b(m)$ as in (\ref{yu})-(\ref{(3)}) ($0\le m<n$) and 
let $j$ be such that $\lambda(q^j(F(m)))=\lambda(F(n))$. 
\item[(c)] Set $q^j(F(m))=W|X|Y|Z$, with $W$ and $Z$ of lengths $\gamma(n)-1$ and $\gamma(n)$, respectively, and with $Y$ starting at entry $x+1$, where $x$ is the leftmost entry of $X$. 
\item[(d)] Set $k=\lambda(n)$ and write $F^k(n)=F(n)$.
\item[(e)] Determine $F^k(n)$ via {\it castling} of $X$ and $Y$, i.e., transposing: $X|Y\rightarrow Y|X$. This yields $F^k(n)=W|Y|X|Z$. 
\item[(f)] Let $n:=n+1$ and repeat the sequence of items (b)-(f).\end{enumerate}

The procedure in items (a)-(f) yields the lexicographically-ordered tight Dyck nests $F(n)=F^k(n)\in{\mathcal N}$ for all $n\ge 0$, and the corresponding Dyck words $f(n)=f^k(n)\in{\mathcal W}$.

\begin{example}\label{catalan} While working with a Dyck word $f$ or a Dyck nest $F$, the order $n\in\mathbb{N}$ of any working tool relating to $f$ or $F$ will be indicated by $ord(\cdot)$.
The RGSs of length $k-1$ conform a tree $T_k$ controlled by the tree $T$ of TRGSs by blowing each such TRGS to an RGS of length $k-1$. Then, $T_k$ can be considered subtree of $T$ corresponding to the prefix $[0,{\mathcal C}_k-1]$ of $\mathbb{N}$, where ${\mathcal C}_k=\frac{(2k)!}{k!(k+1)!}$ is the $k$-th {\it Catalan number} \cite[\seqnum{A000108}]{oeis}, \cite{Stanley}. 
 Fig.~\ref{fig1} contains representations of the effects of the castling operation on the trees $T_k$ ($k=1,2,3,4$, with ${\mathcal C}_k=1,2,5,14$), each such tree with its root $0^{k-1}$ represented in a box containing the order $ord(0^{k-1})=0\in\mathbb{N}$, the root $0^{k-1}$ and $F(0^{k-1})$. 
Each other node $f$ of $T_k$ is represented by a box of two levels: the top level contains the order $ord(\rho(f))\in\mathbb{N}$, the parent $\rho(f)$ and $F(\rho(f))$; the lower level
contains the order $ord(f)\in\mathbb{N}$, $f$ and $F(f)$. 
In these two levels, the entry at which parent $c$ and child $b$ differ in (\ref{yu}), namely at position $\gamma(n)$, is in red in contrast with the remaining entries, in black.   
In all boxes, $F(\rho(f))=``W|X|Y|Z"$ and $F(f)=``W|Y|X|Z"$ have $X$ and $Y$ colored blue and red, respectively, while $W$ and $Z$ are left black. In addition, the edge leading from $\rho(f)$ to $f$ is labeled with its subindex $i$. 

The computational complexity of the castling operation is determined by the value $k=\lambda(n)$ in item (d) in time $O(1)$ and includes: the determination of $\gamma(n)$ and $j$ in item (b), both of time $O(k)$; $W,X,Y,Z$ in item (c), again $O(k)$; the string transposition $XY\rightarrow YX$ of item (e), again $O(k)$; so in sum the time of the castling procedure is $O(k)$, for any particular $n\in\mathbb{N}$, where $k=\lambda(n)$. Considered over all of $T_k$, the complexity amounts to $O(k{\mathcal C}_k)$, where the time complexity of ${\mathcal C}_k$ can be obtained by Dynamic Programming via recursiont ${\mathcal C}_0=1$ and ${\mathcal C}_{k+1}=\sum_{j=0}^{k}{\mathcal C}_j{\mathcal C}_{k-j}$, yielding time $O(k^2)$, a total time $O(k^3)$. Space complexity is $O(k)$.
\end{example}

\begin{figure}[htp]
\includegraphics[scale=0.87]{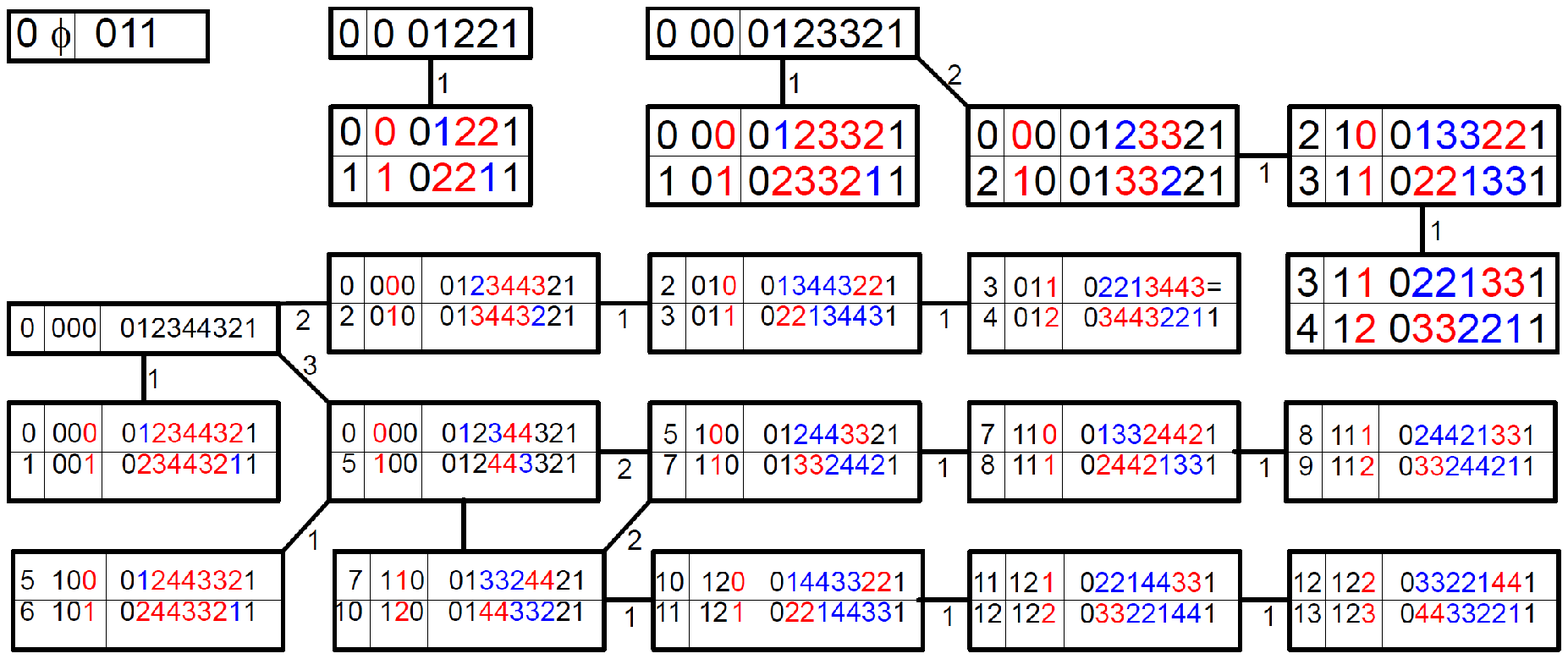}
\caption{A section of $T$ accompanying Definitions~\ref{treee}--\ref{rho} and Section~\ref{sec4}}
\label{fig1}
\end{figure}

\section{Dyck nests blown to a specific length}\label{section5}

\begin{lemma} Every Dyck nest $F=F^k=F(n)=F^k(n)$ of length $2k$ contains a substring $kk=k_1k_2$.\end{lemma}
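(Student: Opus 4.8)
The plan is to read the claim straight off the construction of the Dyck nest in Definitions~\ref{dp}--\ref{dq}. Write $F=F(f)$ for the Dyck word $f=f_1f_2\cdots f_{2k}$ of weight $k$, and let $g_f$ be its Dyck path. The $k$ up-steps and $k$ down-steps of $g_f$ pair off into $k$ matched pairs (matched parentheses along the path), and by Definition~\ref{dq} the two steps of a matched pair receive a common label, the $k$ pairs getting the $k$ distinct values in $[1,k]$ assigned cumulatively in the order: layers $[y,y+1]$ for $y=0,1,2,\dots$ from the bottom up, and within each fixed layer the pairs lying in it from right to left. First I would record the elementary fact that in this order the value $k$ is assigned \emph{last}, hence to a matched pair lying in the topmost nonempty layer, i.e.\ the layer $[M-1,M]$ with $M=\max_{x} g_f(x)$ (this layer is nonempty since $k\ge 1$, and no higher layer contains any step).

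Next I would show that any matched pair lying in the topmost layer occupies two \emph{consecutive} positions of $f$, hence of $F$. Indeed, such a pair consists of an up-step from height $M-1$ to $M$ and its matching down-step from $M$ back to $M-1$; between these two steps the path must stay at height $\ge M$, but $M$ is the maximal height of $g_f$, so the path sits at height exactly $M$ there, which forces there to be no step between them. Thus the up-step and down-step of the pair are adjacent, say at positions $j$ and $j+1$.

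Finally I would combine the two observations: the pair carrying the label $k$ lies in the topmost layer, so its two steps are at consecutive positions $j,j+1$ and both carry $k$; these are precisely the first and second occurrences $k_1,k_2$ of $k$ in $F$, so $F_jF_{j+1}=k_1k_2=kk$ is the desired substring. The degenerate case $k=1$ (where $f=01$ and $F=11$) is subsumed by the same argument.

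I expect the only delicate point to be pinning down the labelling convention of Definition~\ref{dq} sharply enough --- in particular that matched pairs, not individual steps, receive the labels, and that the count $1,2,\dots,k$ runs cumulatively through the layers from the bottom upward --- so that ``the label $k$ sits in the topmost layer'' is fully justified; once that is in place, the consecutiveness of a topmost pair and hence the conclusion are immediate. One could instead try an induction on $k$ via the blowing/reduction operation of Definition~\ref{dr}, but since blowing re-indexes the labels that route is messier, so I would prefer the direct argument above.
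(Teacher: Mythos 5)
Your proof is correct and is the direct argument the paper's one-line proof implicitly relies on: the label $k$ is assigned to the last matched pair in the layer-by-layer, right-to-left ordering, hence to a pair in the topmost layer, and any such pair is forced to occupy consecutive positions since the path cannot exceed its maximum height between the two steps. The paper simply asserts this as ``clear,'' so your write-up supplies exactly the missing details without deviating from the intended approach.
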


\begin{proof}
Clearly, the integer $k$ in the statement appears as the substring $kk=k_1k_2$ in $F$. 
\end{proof}

\begin{lemma}\label{z}  Given a Dyck nest $F^k$ of length $2k$, let $F^{k+1}=q(F^k)$ be the string of length $2(k+1)$ obtained from $F^k$ by inserting the substring $(k+1)(k+1)=(k+1)_1(k+1)_2$ between the two entries $k_1$ and $k_2$ that equal $k$ in $F^k$, so that $q(F^k)$ contains the substring $k(k+1)(k+1)k=k_1(k+1)_1(k+1)_2k_2$. Then, $F^{k+1}=q(F^k)$ is a Dyck nest of length $2k+2$ and its corresponding $f^{k+1}=q(f^k)$ is a Dyck word of length $2k+2$.\end{lemma}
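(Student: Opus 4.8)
The plan is to verify the two claims — that $q(F^k)$ is a Dyck nest and that $q(f^k)$ is a Dyck word — by tracking how the insertion of $(k+1)(k+1)$ affects the associated Dyck path. First I would recall that by the previous lemma the substring $k_1k_2=kk$ actually occurs in $F^k$, so the operation $q$ is well defined. Reading $F^k$ off its Dyck path $g_{f^k}$ as in Definitions~\ref{dp}--\ref{dq}, the two entries equal to $k$ are precisely the unique up-step and unique down-step lying in the topmost horizontal layer $[k-1,k]$ of $g_{f^k}$; since they carry the same label $k$ and the labels in a layer are assigned from right to left, these two steps are consecutive along the path, i.e. the up-step labelled $k$ is immediately followed by the down-step labelled $k$, which is exactly why $kk$ appears as a contiguous substring.

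Next I would describe $q$ geometrically: inserting $(k+1)(k+1)$ between $k_1$ and $k_2$ corresponds to splicing, at the apex of $g_{f^k}$ (the lattice point at height $k$ between the two top steps), one extra up-step followed by one extra down-step. Call the resulting path $g'$. Then $g'$ starts and ends at height $0$, has $2k+2$ steps with $k+1$ up-steps and $k+1$ down-steps, and stays strictly positive on $(0,2k+2)$: on the old portion the heights are unchanged and were already positive there, and the one new lattice point sits at height $k+1>0$. Hence $g'$ is a genuine Dyck path, so the binary string $f^{k+1}$ obtained from it — which is $f^k$ with a $0$ immediately followed by a $1$ inserted at the top — has weight $k+1$, length $2k+2$, and the prefix condition of Definition~\ref{def5}, i.e. it is a Dyck word; this gives the $f$-part of the statement.

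It remains to check that the labelling of $g'$ prescribed by Definition~\ref{dq} reproduces exactly the string $q(F^k)$. The layers $[y,y+1]$ for $y=0,\dots,k-1$ of $g'$ contain the same steps, in the same left-to-right order, as the corresponding layers of $g_{f^k}$, except that the top layer $[k-1,k]$ of $g'$ now contains, in addition to the old pair, ... no: more carefully, the old top layer $[k-1,k]$ still has exactly the same two steps (the up-step previously labelled $k$ and the down-step previously labelled $k$), so each of layers $0,\dots,k-1$ receives the same labels $1,\dots,k$ as before; and the new layer $[k,k+1]$ contains exactly the two new steps, which therefore both get the label $k+1$. Thus every old entry of $F^k$ is preserved and the pair $(k+1)(k+1)$ appears precisely between $k_1$ and $k_2$, so the labelled string is $q(F^k)$, and by construction it is the Dyck nest $F(q(f^k))$ of the Dyck word $q(f^k)$.

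The only genuinely delicate point — the "main obstacle" — is the bookkeeping in this last paragraph: one must be sure that inserting a unit triangle at the apex does not disturb the contents or the right-to-left enumeration of any lower layer, so that the labels $1,\dots,k$ land on exactly the same steps as before and only the fresh pair is labelled $k+1$. Once that is pinned down, both assertions follow, completing the proof.
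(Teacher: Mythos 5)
Your argument is correct in substance and follows the only natural route; the paper's own proof is a one-sentence assertion, so your write-up actually supplies the layer-by-layer verification that the paper omits. One misstatement should be repaired: the topmost layer of $g_{f^k}$ is $[h-1,h]$ where $h$ is the maximum height of the path, which need not equal $k$, and that layer need not contain a unique up-step and a unique down-step (for $f^2=0101$ one has $F^2=2211$, the maximum height is $1$, and the layer $[0,1]$ holds two up-steps and two down-steps). The pair $k_1k_2$ is rather the leftmost up-step and the leftmost down-step of that topmost layer, i.e.\ the leftmost peak at maximum height; these are still adjacent, the spliced pair still constitutes a new layer $[h,h+1]$ all by itself above every old layer, and the lower layers together with their right-to-left enumerations are still undisturbed, so the rest of your bookkeeping goes through verbatim with $h$ in place of $k$.
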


\begin{proof}
Clearly, the substring $k(k+1)(k+1)k=k_1(k+1)_1(k+1)_2k_2$ in the larger Dyck nest $F^{k+1}=q(F^k)$, based in the shorter Dyck nest $F^k$ that contains just the substring $kk=k_1k_2$ and is of length $2k$, makes it a Dyck nest of length $2k+2$.
\end{proof}

\begin{definition}\label{wesay} We say that $F^{k+1}=q(F^k)$ and $f^{k+1}=q(f^k)$ are {\it blown} from $F^k$ and $f^k$, respectively, with $q(\cdot)$ as in Lemma~\ref{z}.
By repeating the $q(\cdot)$-operation, we {\it blow} $F^k$ (resp., $f^k$) successively to Dyck nests $q^2(F^k)$, $q^3(F^k)$, etc. (resp., Dyck words $q^2(f^k)$, $q^3(f^k)$, etc.) We also say that $F^k(n)$ and $f^k(n)$ are {\it $k$-blown}, where necessarily $0\le n<{\mathcal C}_k=\frac{(2k)!}{k!(k+1)!}$, with upper bound ${\mathcal C}_k$. 
\end{definition}

\begin{definition}
In a likewise fashion to that of Definition~\ref{wesay}, given an RGS $b(n)\in{\mathcal S}$, we define $q(b(n))=0|b(n)=0b(n)$, assignment that can be iterated as: 
$$q^2(b(n))=00b(n),\ldots,q^i(b(n))=0^ib(n),\mbox{ etc.}$$ \end{definition}

\begin{obs}
$\lambda(n)=k$,  $\forall n\in[{\mathcal C}_{k-1},{\mathcal C}_k-1]$, where $\lambda$ is defined in Section~\ref{sec4}.
\end{obs}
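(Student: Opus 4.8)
The plan is to pin down $\lambda(n)$ and then read the statement off the structure of the tree $T$. From item~(a) of Section~\ref{sec4} together with the notation $b(n)=a_{k-1}a_{k-2}\cdots a_1$ of (\ref{chris}), the value $\lambda(n)$ is precisely the parameter $k$ of the TRGS $b(n)$, i.e.\ one more than the number of entries of $b(n)$; for $n\ge 1$ this also equals the half-length of the tight Dyck nest $F(n)=F^k(n)$ produced by the castling procedure (cf.\ the output clause of Section~\ref{sec4}, where $k=\lambda(n)$). So it suffices to show that $b(n)$ has exactly $k-1$ entries whenever $\mathcal{C}_{k-1}\le n\le \mathcal{C}_k-1$.

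First I would record an order fact: in the lexicographically ordered sequence $\mathcal{S}$, the TRGSs with a fixed number of entries form a contiguous block of indices, and these blocks are listed by increasing length. Here the remark following Definition~\ref{def3} is used: a TRGS with more than one entry begins with the digit $1$, so completing a shorter TRGS by leading zeros to a common length produces a string beginning with $0$, and such a string precedes (Definition~\ref{prec}) every TRGS with strictly more entries; this forces the block structure. Consequently, for each $k\ge 2$ the TRGSs with at most $k-1$ entries occupy an initial segment $[0,N_k-1]$ of $\mathbb{N}$, where $N_k$ is the number of such TRGSs.

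It remains to identify $N_k$. The map sending a TRGS $b$ with at most $k-1$ entries to the string obtained by prefixing it with enough zeros to reach length $k-1$ is a bijection onto the RGSs of length $k-1$ (its inverse strips leading zeros, by Definition~\ref{def3}), so $N_k$ equals the number of RGSs of length $k-1$. Writing such an RGS as $c_{k-1}c_{k-2}\cdots c_1$ and setting $c_k:=0$, its defining conditions become $c_i\ge 0$ and $c_i\le c_{i+1}+1$ for $1\le i\le k-1$; read in reverse this is a word $c_kc_{k-1}\cdots c_1$ of $k$ nonnegative integers starting at $0$ and rising by at most $1$ at each step, and the number of such words is the $k$th Catalan number $\mathcal{C}_k$. (This is exactly the count behind Example~\ref{catalan}, where the tree $T_k$ of RGSs of length $k-1$ is identified, via blowing, with the prefix $[0,\mathcal{C}_k-1]$ of $\mathbb{N}$; so $N_k=\mathcal{C}_k$ may alternatively just be quoted from there.) Thus the TRGSs with at most $k-1$ entries occupy $[0,\mathcal{C}_k-1]$ and those with at most $k-2$ entries occupy $[0,\mathcal{C}_{k-1}-1]$; subtracting, every index in $[\mathcal{C}_{k-1},\mathcal{C}_k-1]$ belongs to a TRGS with exactly $k-1$ entries, i.e.\ $\lambda(n)=k$ there, as claimed. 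The only points needing real care are the order step (checking that the precedence of Definition~\ref{prec} genuinely sorts shorter TRGSs ahead of longer ones) and the small cases $k=1$ (vacuous) and $k=2$; the arithmetical substance is the Catalan count $N_k=\mathcal{C}_k$, already advertised in Example~\ref{catalan}.
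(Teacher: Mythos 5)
Your argument is correct, and it is worth noting that the paper itself supplies no proof here: the statement is recorded as an unproved Observation, and the two facts you establish are exactly the ones the paper elsewhere takes for granted --- Example~\ref{catalan} asserts that the RGSs of length $k-1$ occupy the prefix $[0,{\mathcal C}_k-1]$ of $\mathbb{N}$, and the Observation following this one in Section~\ref{section5} asserts that ${\mathcal C}_k$ is the smallest $n$ with $b(n)$ of length $k$. Your decomposition (TRGSs of a fixed length form contiguous blocks listed by increasing length; the block of lengths at most $k-1$ is in zero-padding bijection with the RGSs of length $k-1$, counted by the Catalan number ${\mathcal C}_k$ via the ``start at $0$, rise by at most $1$'' encoding) supplies precisely the missing justification, and the inverse of your bijection (stripping leading zeros, Definition~\ref{def3}) is handled correctly, including the junction condition at the first nonzero entry. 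One caveat on the ordering step: the precedence relation of Definition~\ref{prec} is a componentwise partial order under which, for instance, $0|12=012$ and $100$ are incomparable, so the claim that a padded shorter TRGS precedes every longer TRGS does not follow from Definition~\ref{prec} as literally stated; it holds only for the intended standard lexicographic order realized in display~(\ref{(1)}) (the order of the underlying OEIS sequence), where a leading $0$ beats a leading $1$. That is a defect of the paper's definition rather than of your argument, but it is the one place where the ``real care'' you flag is genuinely needed.
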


 \begin{lemma} The castling procedure of Section~\ref{sec4} permutes with the $q(\cdot)$-operation, so the compositions $b(n)\rightarrow F(n)\rightarrow q^i(F(n))$ (resp., $b(n)\rightarrow f(n)\rightarrow q^i(f(n))$) and $b(n)\rightarrow q^i(b(n))\rightarrow q^i(F(n))$  (resp., $b(n)\rightarrow q^i(b(n))\rightarrow q^i(f(n))$) conform a commutative diagram.\end{lemma}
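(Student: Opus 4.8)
The plan is to reduce the claim to the commutativity of a \emph{single} castling step with a \emph{single} blow $q$, and then to iterate along the tree $T$. The starting observation is that blowing a restricted-growth string, $q^i(b(n))=0^i|b(n)$, changes neither the integer $\gamma(n)$ of Definition~\ref{mathbbn} (prefixing zeros does not move the rightmost nonzero entry) nor the parent assignment of Definition~\ref{rho} (the parent of $0^i|b(n)$ is $0^i|b(m)$, obtained by decreasing the same entry, still at right-to-left position $\gamma(n)$). Consequently, when the procedure of Section~\ref{sec4} is run on $q^i(b(n))$, the data produced in items (b)--(e)---the window lengths $|W|=\gamma(n)-1$ and $|Z|=\gamma(n)$, the split of $X|Y$, and (because $q$ raises $\lambda$ by one, both for a restricted-growth string and for a nest) the exponent $j$ of the blow $q^j$ rescaling the parent nest---are exactly those produced for $b(n)$, only now acting on $q^i(F(m))$ in place of $F(m)$, the former being the castled nest of $q^i(b(m))$ by the inductive hypothesis (the parent $m=\rho(n)$ has smaller index). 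So it suffices to prove: if $F(n)$ is obtained from $q^j(F(m))=W|X|Y|Z$ by the transposition $X|Y\to Y|X$ of item (e), then $q\bigl(q^j(F(m))\bigr)$, decomposed with the same window lengths, yields $q(F(n))$ under the same transposition; the base case, $b(n)=b(0)=0$ with $q^i(b(0))=0^{i+1}$, where by construction castling produces the iterated blow $q^i(11)=q^i(F(0))$, is immediate.

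For this one step, recall from Lemma~\ref{z} that $q$ inserts the new top pair $(k{+}1)(k{+}1)$ precisely at the unique substring $kk=k_1k_2$, replacing it by $k_1(k{+}1)_1(k{+}1)_2k_2$ and leaving all other entries fixed; under the dictionary ``first appearance $\mapsto 0$, second appearance $\mapsto 1$'' this is exactly the word replacement $01\to 0011$ of Definition~\ref{dr}. Write $q^j(F(m))=W|X|Y|Z$ as in item (c). The substring $kk$ lies inside one of the four blocks and cannot straddle a boundary: a straddle at $W|X$ or at $X|Y$ would force the leftmost entry $x$ of $X$ to be the maximum $k$, leaving the rule ``$Y$ starts at entry $x+1$'' undefined; a straddle at $Y|Z$ would put the two copies of $k$ in $F(n)=W|Y|X|Z$ on opposite sides of $X$, contradicting that every Dyck nest contains a substring $kk$. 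If $kk$ is inside $W$ or inside $Z$, the transposition does not meet it and the two orders of the operations visibly agree. If $kk$ is inside $X$, then $q(W|X|Y|Z)=W\,|\,q(X)\,|\,Y\,|\,Z$, whose castling is $W\,|\,Y\,|\,q(X)\,|\,Z$; on the other hand $q$ applied to the castled nest $W|Y|X|Z$ inserts at the same $kk$, now sitting in the third block $X$, producing again $W\,|\,Y\,|\,q(X)\,|\,Z$. The case $kk\subset Y$ is identical.

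It then remains to check that iterating $q$ adds nothing new: after the first blow the fresh pair $(k{+}1)(k{+}1)$ lies strictly inside the enlarged block $q(X)$ (resp.\ $q(Y)$), since $kk$ was interior; and since $x<k$, no new occurrence of the value $x+1$ is created and the position at which $Y$ begins is unchanged, so $q^i(W|X|Y|Z)=W\,|\,q^i(X)\,|\,Y\,|\,Z$ (resp.\ with $Y$), and the displayed identity of the previous paragraph holds at every stage. Finally, the word version of the square follows at once: since $f$ is read off $F$ by the same dictionary and, as noted, $q$ on nests corresponds to $q$ on words, commutativity of the nest square transports to commutativity of the word square.

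The step I expect to be the real obstacle is the rigorous exclusion of the straddling configurations, equivalently the proof that the leftmost entry $x$ of $X$ always satisfies $x<k$, i.e.\ that the split in item (c) is genuinely well-defined for every $n$; establishing this will require exploiting the recursive construction of the nests occurring in Section~\ref{sec4} together with the constraints $|W|=\gamma(n)-1$, $|Z|=\gamma(n)$ that tie the window to the length $2\lambda(n)$ of $F(n)$, and it is where the technical work is concentrated.
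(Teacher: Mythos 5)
Your overall strategy --- induct along $T$, reduce everything to the commutation of one castling step with one blow $q$, and track where the insertion locus $k_1k_2$ sits relative to the four blocks of item (c) --- is sound, and it supplies considerably more than the paper does: the paper's entire proof is the sentence ``Direct verifications allow to establish the commutativity of the two diagrams,'' so any real argument here has to be built from scratch, as you attempt. Your treatment of the cases $k_1k_2\subset X$ and $k_1k_2\subset Y$ is correct: the insertion point travels with its block under the transposition, and the string $W\,|\,Y\,|\,q(X)\,|\,Z$ (resp.\ $W\,|\,q(Y)\,|\,X\,|\,Z$) is obtained in either order.

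The gap is the sentence ``If $kk$ is inside $W$ or inside $Z$, the transposition does not meet it and the two orders of the operations visibly agree.'' This is false, because $W$ and $Z$ are not blocks that persist under blowing: item (c) defines them as the prefix of length $\gamma(n)-1$ and the suffix of length $\gamma(n)$, and these lengths depend only on $b(n)$ --- hence are unchanged when the procedure is rerun on $q^i(b(n))$, since prefixing zeros does not move the rightmost nonzero entry (Definition~\ref{mathbbn}). If $k_1k_2$ lay inside the prefix window, blowing would lengthen that prefix by two, the fixed-length window would then cut it short, all four blocks of the blown string would be re-sliced, and the resulting castling would in general differ from $q$ applied to $W|Y|X|Z$; the same applies to the suffix window. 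So these two cases must be shown \emph{not to occur}, not declared harmless --- that is precisely the content of Observation~\ref{obs} (``$k_1k_2$ is present either in $X$ or in $Y$''), which the paper also states without proof and which your argument still owes. A second, smaller slip: in excluding a straddle at the $X|Y$ boundary you claim it forces $x=k$; it actually forces $x=k-1$, since $Y$ begins with the value $x+1$, and the correct exclusion there is that $Y$ begins at the \emph{first} occurrence of the value $x+1$, so $X$ cannot end with an entry of that value. You do flag the localization of $k_1k_2$ as the hard technical point in your closing paragraph, but the case split you build around it assigns the genuinely dangerous cases to the ``visibly agree'' bin.
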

 
\begin{proof}
Direct verifications allow to establish the commutativity of the two diagrams:

$$\begin{array}{cccccccc}
b(n)&\rightarrow&F(n)&&& b(n)&\rightarrow&f(n)\\
\downarrow&&\downarrow&&&\downarrow&&\downarrow\\
q^i(b(n))&\rightarrow&q^i(F(n))&&&q^(b(n))&\rightarrow&q^i(f(n))
\end{array}$$

\end{proof}

\begin{obs} {\bf(A)} Recalling Definition~\ref{rho}, $\rho^{-1}(0)=\{{\mathcal C}_k;0<k\in\mathbb{Z}\}$, where ${\mathcal C}_k$ is the smallest $n$ such that $b(n)$ is of length $k$. {\bf(B)} All vertices $b(n)=a_{k-1}\cdots a_1$ of $T$ with $n\notin\rho^{-1}(0)$ have either $\gamma(n)$ or $\gamma(n)-1$ children, depending on whether $a_{\gamma(n)}\le a_{\gamma(n)+1}$ or not, where $\gamma(n)$ is given in Definition~\ref{mathbbn}. \end{obs}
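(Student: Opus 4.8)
The plan is to treat (A) and (B) separately, in each case by unwinding the parent rule of Definition~\ref{treee} while tracking the position $\gamma(n)$ of the rightmost nonzero entry of $b(n)$. For (A): $\rho(n)=0$ means $c(n)=b(0)$, which by Definition~\ref{treee} happens exactly in its exceptional clause, i.e.\ when $\gamma(n)$ equals the length $\ell$ of $b(n)$. Writing $b(n)=a_{k-1}\cdots a_1$, this amounts to $a_{\ell-1}=\cdots=a_1=0$; since the leading entry of a TRGS of length $\ge2$ is forced to be $1$, and the only nonzero TRGS of length $1$ is $b(1)=1$, it is in turn equivalent to $b(n)=10^{\ell-1}$. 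Among the TRGSs of length $\ell$ the string $10^{\ell-1}$ is lexicographically smallest, so in the lexicographically ordered list ${\mathcal S}$ it carries the least index of any length-$\ell$ TRGS; by Example~\ref{catalan}, where $T_j$ is shown to fill the prefix $[0,{\mathcal C}_j-1]$ and to consist of the (zero-padded) TRGSs of length $<j$, that least index is the Catalan number ${\mathcal C}_\ell$. Conversely $b({\mathcal C}_\ell)=10^{\ell-1}$ lies in the exceptional clause, so $\rho({\mathcal C}_\ell)=0$; letting $\ell$ range over the positive integers yields $\rho^{-1}(0)=\{{\mathcal C}_k:0<k\in\mathbb{Z}\}$.

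For (B): fix $n>0$ with $n\notin\rho^{-1}(0)$, write $b(n)=a_{k-1}\cdots a_1$ and set $\gamma=\gamma(n)$, so $a_\gamma\ne0=a_{\gamma-1}=\cdots=a_1$ and, by (A), $\gamma<k-1$, so that $a_{\gamma+1}$ is an actual entry of $b(n)$. The crux is to identify the children of $b(n)$. For $p\in\{1,\dots,\gamma\}$ let $b(n)^{(p)}$ denote $b(n)$ with the entry $a_p$ replaced by $a_p+1$; I claim the children of $b(n)$ are exactly those $b(n)^{(p)}$, $1\le p\le\gamma$, which are TRGSs. Indeed, in such a $b(n)^{(p)}$ all entries in positions $1,\dots,p-1$ vanish while the entry in position $p$ is $a_p+1\ge1$, so its rightmost nonzero entry sits in position $p<k-1$, and the non-exceptional clause of Definition~\ref{treee} returns $b(n)$ as its parent. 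Conversely, a child $b(n')$ of $b(n)$ satisfies $c(n')=b(n)\ne b(0)$, so the non-exceptional clause applies to $n'$, meaning $b(n')$ is $b(n)$ with the entry in position $q:=\gamma(n')$ raised by $1$; and for $q$ to be the position of the rightmost nonzero entry of the result we must have $a_{q-1}=\cdots=a_1=0$, i.e.\ $q\le\gamma$, so $b(n')=b(n)^{(q)}$ with $q\in\{1,\dots,\gamma\}$.

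It then remains to decide which $b(n)^{(p)}$ are TRGSs and to count them. Since $p\le\gamma<k-1$, raising $a_p$ to $a_p+1$ preserves the leading $1$, the nonnegativity of the entries, and every restricted-growth inequality $a_j\le a_{j+1}+1$ except the two involving position $p$: the one at $j=p-1$ turns into $a_{p-1}\le a_p+2$, still implied by $a_{p-1}\le a_p+1$, and the one at $j=p$ turns into $a_p+1\le a_{p+1}+1$. Hence $b(n)^{(p)}$ is a TRGS if and only if $a_p\le a_{p+1}$. For $p\in\{1,\dots,\gamma-1\}$ we have $a_p=0$, so this holds automatically and contributes $\gamma-1$ children; the value $p=\gamma$ contributes one further child exactly when $a_{\gamma(n)}\le a_{\gamma(n)+1}$. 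Therefore $b(n)$ has $\gamma(n)$ children if $a_{\gamma(n)}\le a_{\gamma(n)+1}$ and $\gamma(n)-1$ children otherwise, which is (B).

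The step I expect to be the main obstacle is this characterization of the children in (B): establishing that the raised position $p$ must lie in $\{1,\dots,\gamma(n)\}$, and that for such $p$ membership in the set of TRGSs collapses to the single inequality $a_p\le a_{p+1}$. Once that is secured, the count in (B) and all of (A) become routine. A secondary, purely bookkeeping, point is the degenerate behaviour at the root $b(0)=0$ and at the length-one vertices, which is precisely why $n=0$ and the indices ${\mathcal C}_k$ are excluded from the statement of (B).
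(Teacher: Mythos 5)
Your proof is correct. The paper offers no proof of this Observation at all, so there is nothing to compare against; your argument---identifying $\rho^{-1}(0)$ with the exceptional clause of Definition~\ref{treee} (which forces $b(n)=10^{\ell-1}$, the first string of its length, hence of index ${\mathcal C}_\ell$), and identifying the children of $b(n)$ with those increments $b(n)^{(p)}$, $1\le p\le\gamma(n)$, that remain TRGSs, which reduces to the single inequality $a_p\le a_{p+1}$---is precisely the routine unwinding of the definitions that the paper leaves implicit, and it is confirmed by the paper's own example ($b(7)=110$, $b(10)=120$, $b(30)=1210$, $b(33)=1220$). One caveat: the restricted-growth inequality you use, $a_j\le a_{j+1}+1$, is not what the paper's definition of a TRGS literally states (it says $a_{j+1}\ge a_j+1$), but the literal condition contradicts the paper's own list ${\mathcal S}$ (e.g.\ the members $11$ and $12$), so you have correctly adopted the evidently intended standard condition; it would be worth flagging that discrepancy explicitly.
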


\begin{example}
 {\bf(A)} $b({\mathcal C}_3)=b(5)=100$. {\bf(B)} The children of $b(7)=110$ are $b(8)=111$ and $b(10)=120$, but $b(8)=111$ and $b(10)=120$ have only respective children $b(9)=112$ and $b(11)=121$; the children of $b(30)=1210$ are $b(31)=1211$ and $b(33)=1220$, these having respective children sets $\{b(32)\}=\{1212\}$ and $\{b(34),b(37)\}=\{1221,1230\}$.
\end{example}

\section{Anchoring Dyck words to odd graphs}\label{section6}

\begin{lemma}\label{anch} Prefixing a 0-bit to every $f^k=f^k(n)$ yields exactly ${\mathcal C}_k$ corresponding representatives $0f^k=0f^k(n)$ of the cyclic classes mod $2k+1$ (or $\mathbb{Z}_{2k+1}$-classes) of $V(O_k)$. \end{lemma}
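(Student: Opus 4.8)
The statement counts two things and claims they match: on one side, the Dyck words $f^k(n)$ of length $2k$ indexed by $0\le n<\mathcal{C}_k$, which by the castling procedure of Section~\ref{sec4} are exactly the $\mathcal{C}_k$ distinct Dyck words of length $2k$ (weight $k$, every prefix has at least as many $0$-bits as $1$-bits); on the other side, the cyclic ($\mathbb{Z}_{2k+1}$-orbit) classes of $V(O_k)$, i.e.\ the $k$-subsets of $[0,2k]$ up to the cyclic rotation $x\mapsto x+1 \pmod{2k+1}$. I would first record that the number of cyclic classes of $k$-subsets of a $(2k+1)$-set is exactly $\mathcal{C}_k$: since $\gcd(k,2k+1)=1$, the cyclic group $\mathbb{Z}_{2k+1}$ acts freely on the $\binom{2k+1}{k}$ such subsets, so the number of orbits is $\binom{2k+1}{k}/(2k+1)=\frac{1}{2k+1}\binom{2k+1}{k}=\mathcal{C}_k$. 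This is the classical ``cycle lemma'' count, and it already matches the number of length-$2k$ Dyck words.

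\textbf{The bijection via a canonical representative.} The substantive part is to show that prefixing a $0$-bit to $f^k(n)$ produces, for each of the $\mathcal{C}_k$ values of $n$, a representative of a distinct cyclic class, and that all classes arise this way. Given a cyclic class of a $k$-subset $S\subseteq[0,2k]$, view its characteristic vector as a cyclic binary string of length $2k+1$ with $k$ ones and $k+1$ zeros. By the cycle lemma, among the $2k+1$ cyclic rotations of this string there is exactly one rotation $0w$ whose suffix $w$ (of length $2k$) is a Dyck word in the sense of Definition~\ref{def5}: namely, rotate so that the string begins at the position realizing the minimum of the height walk (reading a $0$ as $+1$, a $1$ as $-1$), and among positions achieving the minimum pick the one that makes the initial bit a $0$; uniqueness is exactly the freeness/gcd argument. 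Thus $S\mapsto(\text{that unique }w)$ is a well-defined injection from cyclic classes into length-$2k$ Dyck words, and it is surjective because every Dyck word $w$ gives the valid string $0w$. Composing with the castling enumeration $n\mapsto f^k(n)$, which by the final paragraph of Section~\ref{sec4} lists each length-$2k$ Dyck word exactly once as $n$ ranges over $[0,\mathcal{C}_k-1]$, we get that $n\mapsto 0f^k(n)$ hits exactly one representative in each cyclic class, hence exactly $\mathcal{C}_k$ representatives, as claimed.

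\textbf{Main obstacle.} The only real content to nail down carefully is the claim that the $0w$ form is a \emph{canonical} cyclic representative: i.e.\ that among the $2k+1$ rotations of the characteristic vector of $S$, exactly one has the shape $0\cdot(\text{Dyck word})$. I would argue this by the standard cycle-lemma reasoning: the excursion reading $0\mapsto+1$, $1\mapsto-1$ on the infinite periodic extension has slope $+1/(2k+1)$, so among any $2k+1$ consecutive starting positions there is a unique one that is a strict running minimum over the next $2k+1$ steps and whose first step is an up-step ($0$-bit); starting there, the length-$2k$ suffix stays strictly positive until the last step and ends at height $0$, i.e.\ it is a Dyck word, and no other rotation works. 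One must also double-check the boundary bookkeeping—that ``height $>0$ for $0<x<2k$'' in Definition~\ref{dp} matches the strict-minimum condition here, since the reversed-complement convention of the paper flips $0$s and $1$s relative to the usual Dyck-word orientation; this is routine but should be stated explicitly to avoid an off-by-one sign error. Everything else (free action, orbit count, and the fact that castling enumerates all Dyck words once) is already available.
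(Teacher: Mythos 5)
The paper states Lemma~\ref{anch} without any proof, so there is nothing to compare against line by line; your argument is correct and supplies exactly the standard justification one would expect. The two ingredients you use are the right ones: the orbit count (the $\mathbb{Z}_{2k+1}$-action on weight-$k$ binary $(2k+1)$-strings is free because a nontrivial period $p\mid 2k+1$ would force $(2k+1)/p$ to divide $k$, contradicting $\gcd(k,2k+1)=1$, whence $\binom{2k+1}{k}/(2k+1)=\mathcal{C}_k$ classes), and the cycle lemma (a $\pm1$-sequence with $k+1$ up-steps and $k$ down-steps summing to $+1$ has exactly one rotation with all partial sums positive, which necessarily begins with a $0$-bit and whose length-$2k$ suffix satisfies the prefix condition of Definition~\ref{def5}). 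The one external fact you lean on --- that the castling procedure followed by $k$-blowing enumerates each length-$2k$ Dyck word exactly once for $0\le n<\mathcal{C}_k$ --- is asserted, not proved, at the end of Section~\ref{sec4} and in Definition~\ref{wesay}, so your proof is conditional on that claim to the same extent the lemma itself is; it would be worth saying so explicitly. Your side remark about Definition~\ref{dp} is apt: its requirement $g_f(x)>0$ for $0<x<2k$ is inconsistent with Definition~\ref{def5} (e.g., $f=0101$ returns to height $0$ in the interior), but the correct prefix-inequality convention of Definition~\ref{def5} is the one your cycle-lemma bookkeeping matches, so this does not affect your argument.
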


\begin{definition} The representatives in Lemma~\ref{anch} are to be called {\it anchored} $k$-blown Dyck words. The tight versions of such representatives will be said to be {\it tight odd-graph representatives}, or {\it TOGR}'s.\end{definition}

\begin{example} The list of TOGR's corresponding to (\ref{(5)}) starts as follows: 
\begin{eqnarray}\label{-}^{\{001,00101,0001011,0010011,0010101, 000010111, 000101101,000100111,}_{  000110011, 001001101, 000101011, 001001011, 001010011, 001010101\}.}\end{eqnarray}\end{example}

\begin{definition} The tight Dyck nests in (\ref{(4)}) associated to the Dyck words in  (\ref{(5)}) act as the {\it pull-backs} of those Dyck words. That is: each $j_1$ in a Dyck nest $F(n)$ of $\mathcal N$ corresponds to a 0-bit in the corresponding Dyck word $f(n)$ of $\mathcal W$, so $j_2$ in $F(n)$ corresponds to a corresponding 1-bit in $f(n)$. This makes $F(n)$ to be the pull-back of $f(n)$.
\end{definition}

\begin{example}\label{k=3-152} Let $k=3$. Let us begin with the prefix of length ${\mathcal C}_3=5$ of $0{\mathcal W}$ in (\ref{-}):
$$(001, 00101, 0001011, 0010011, 0010101),$$ and transform its TOGR's of length less than $2k+1=7$, namely $0f(0)=0f^1(0)=001$ and $0f(1)=0f^2(1)=00101$, to their pull-back anchored Dyck nests, $0F(0)=0F^1(0)=011$ and $0F(1)=0F^2(1)=02211$, respectively. These two TOGR's are 3-blown to $0F^3(0)=0q^2(011)=0123321$ and $0F^3(1)=0q(02211)=0233211$, that together with the pull-backs of the remaining anchored $3$-blown Dyck words, ($0f^3(2)=0001011$, $0f^3(3)=0010011$ and $0f^3(4)=0010101$), namely the anchored $3$-blown Dyck nests $0F^3(2)=0122331$, $0F^3(3)=0221331$ and $0F^3(4)=0332211$, form the pull-backs of their corresponding anchored $3$-blown Dyck words. Summarizing: $0000111$, $0001101$, $0001011$, $0010011$ and $0010101$; these represent in $V(O_3)$ the ${\mathcal C}_3$ (=5) $\mathbb{Z}_7$-classes $[0f]$, displayed in the upper half of Table~\ref{pull-back}
and correspond to the $\mathbb{Z}_7$-classes $[0F]$ of their pull-back anchored 3-blown Dyck nests $0F$, displayed in the lower half of the table.
\end{example}

\begin{table}[htp]
$$\begin{array}{|clc|}\hline
&[0000111]=\{0000111, 0001110, 0011100, 0111000, 1110000, 1100001,  1000011\}&\\
&[0001101]=\{0001101, 0011010, 0110100, 1101000, 1010001, 0100011, 1000110\}&\\
&[0010110]=\{0001011, 0010110, 0101100, 1011000, 0110001, 1100010, 1000101\}&\\
&[0010011]=\{0010011, 0100110, 1001100, 0011001, 0110010, 1100100, 1001001\}&\\
&[0010101]=\{0010101, 0101010, 1010100, 0101001, 1010010, 0100101, 1001010\}&\\\hline
&[0123321]=\{0123321, 1233210, 2332101, 3321012, 3210123, 2101233, 1012332\}&\\
&[0233211]=\{0233211, 2332110, 3321102, 3211023, 2110233, 1102332, 1023321\}&\\
&[0122331]=\{0122331, 1223310, 2233101, 2331012, 3310122, 3101223, 1012233\}&\\
&[0221331]=\{0221331, 2213310, 2133102, 1331022, 3310221, 3102213, 1022133\}&\\
&[0332211]=\{0332211, 3322110, 3221103, 2211033, 2110332, 1103322, 1033221\}&\\\hline
\end{array}$$
\caption{$\mathbb{Z}_7$-classes of $V(O_3)$ and their pull-back anchored 3-blown Dyck nests}
\label{pull-back}
\end{table}

The characteristic vectors of the vertices of $O_k$ in the upper half of Table~\ref{pull-back} also represent the elements of $L_k\subset V(M_k)$. The complements of their reversed strings represent  $L_{k+1}$ so their union $L_k\cup L_{k+1}$ forms $V(M_K)$, where adjacency is given by inclusion of $ L_k$ into $L_{k+1}$.

\section{Arc factorizations of odd graphs}\label{arcf}

Let\; $^0\!F=0F_1\cdots F_{2k}$ be an anchored Dyck nest representing a $\mathbb{Z}_{2k+1}$-class of  $V(O_k)$ ($F_i\in[1,k]$, for $i\in[1,2k]$). Then, such a class can be expressed (Example~\ref{k=3-152}, Table~\ref{pull-back}) as $[^0\!F]=$
\begin{eqnarray}\label{^0F}\{^0\!F\!=\!0F_1\cdots F_{2k},\;\; ^1\!\!F\!=\!F_1\cdots F_{2k}0,\;\; ^2\!F\!=\! F_2\cdots F_{2k}0F_1,\; \ldots,\;\; ^{2k}\!F\!=\!F_{2k}0F_1\cdots F_{2k-1}\}.\end{eqnarray}
Each entry of value $j\in[0,k]$ in a $(2k+1)$-tuple $\chi=\,^\ell\!F$ as in (\ref{^0F}) ($\ell\in[0,2k]$) is either the {\it first}, $j_1$, or {\it second}, $j_2$, {\it appearance} of $j$ in $\chi$ counting from the entry of value 0, from left to right and cyclically mod $2k+1$, e.g., $^2\!F$ has value 0 in the penultimate entry  (see (\ref{^0F})).

Let  $\ell\in[0,2k]$.
Given the entry of value $j=0$ in $^\ell\!F$ or the first appearance $j_1$ of an integer $j\in[1,k]$ in $^\ell\!F$, there exists just one anchored $k$-blown Dyck nest 
$_{j+\ell}^{\hspace*{4mm}\ell}\!F\ne\,^\ell\!F$ (with $j+\ell$ taken mod $n$) adjacent as a vertex of $O_k$ to $^\ell\!F$ via one of the two arcs forming an edge $e$ of $O_k$,
namely the arc $\overrightarrow{e}=(^\ell\!F,$ $_{j+\ell}^{\hspace*{4mm}\ell}\!F)$,
and having:
\begin{enumerate}\item $(k-j)_1$ as first appearance of $k-j$ in the same position as $j_1$ in $^0\!F$, and
\item the other $j=0$ or first (resp., second) appearance positions of integers $j\in[1,k]$ in $^\ell\!F$ as second  (resp., $j=0$ or first) appearance positions of integers $j\in[1,k]$ in $_{j+\ell}^{\hspace*{4mm}\ell}\!F$.\end{enumerate}

\begin{table}[htp]
$$\begin{array}{|ccccccc|}\hline
0^0_3=|0|123321 && 0^0_3=0|1|23321 && 0^0_3=01|2|3321 && 0^0_3=012|3|321 \\
0^4_3=|3|321012 && 2^4_3=3|2|21013 && 1^4_3=32|1|1023 && 0^3_3=321|0|123 \\
&&&&&&\\
1^0_3=|0|233211 && 1^0_3=0|2|33211 && 1^0_3=02|3|3211 && 1^0_3=02332|1|1 \\
3^3_3=|3|310221 && 4^3_3=2|1|10332 && 2^2_3=21|0|1332 && 0^3_3=32101|2|3 \\
&&&&&&\\
2^0_3=|0|133221 && 2^0_3=0|1|33221 && 2^0_3=01|3|3221 && 2^0_3=0133|2|21 \\
1^0_3=|3|321102 && 4^5_3=3|2|21103 && 3^2_3=21|0|3312 &&  0^3_3=3210|1|23 \\
&&&&&&\\
3^0_3=|0|221331 && 3^0_3=0|2|21331 && 3^0_3=022|1|331 && 3^0_3=0221|3|31 \\
2^5_3=|3|322101 && 3^5_3=2|1|33102 && 3^2_3=310|2|213 && 1^4_3=3211|0|23 \\
&&&&&&\\
4^0_3=|0|332211 && 4^0_3=0|3|32211 && 4^0_3=033|2|211 && 4^0_3=03322|1|1 \\
4^6_3=|3|322110 && 4^1_3=1|0|33221 && 2^2_3=210|1|332 && x^0_3=32110|2|3\\\hline
\end{array}$$
\caption{Twenty adjacencies in $O_3$}
\label{tabla1}
\end{table}

\begin{remark}\label{assign}Assigning color $j$ to the arc $\overrightarrow{e}$ implies assigning color $(k-j)$ to its opposite arc $\overleftarrow{e}=$  $(_{j+\ell}^{\hspace*{4mm}\ell}\!F,$ $^\ell\!F)$. This insures an arc factorization of $O_k$ that we call the {\it edge-supplementary 1-arc-factorization of} $O_k$, since by considering the color set $[0,k]$ as if it were a set of weights with values in $\mathbb{N}$, then the weight of each edge seen as the sum of the weights of its two arcs is constantly equal to $k$ in $O_k$. This justifies our supplementary terminology.\end{remark}

 We use the following notation for the vertices $v\in V(O_k)$: $v=n_k^j$, where $n$ is the order of the TRGS $b(n)\in{\mathcal S}$ yielding the $\mathbb{Z}_{2k+1}$-class of $v$, and $j$ is taken by expressing $v$ as $^j\!F$ in (\ref{^0F}).

Thus, for each $k\in\mathbb{N}$, the arcs of $O_k$ form an {\it arc factorization} $\mathcal F$ composed by the {\it arc factors} $\mathcal F_j$ formed by those arcs $(n^j_k,\ell^{j'}_k)$ with fixed $j\in[0,k]$ and adequate $n$, $\ell$ and $j'$.

\begin{example} Table~\ref{tabla1} shows the twenty adjacency pairs that represent the arcs $(n^0_k,\ell^j_k)$, for $k=3$, in a sandwich fashion, with each of the two shown layers being ``\,$v^j_k=\,^j\!F$\,"; $^j\!F$ shown with concatenation bars delimiting
either $j=0$ or the first appearance $j_1$ of $j\in[1,2k]$ on top (showing $n_k^0$) and the first appearance
$(k-j)_1$ of $k-j$ below (showing $\ell_k^j$).
\end{example}

Lifting $\mathcal F$ via pull-back from $O_k$ to $M_k$ via the 2-covering graph map $\Psi_k:M_k\rightarrow O_k$ yields the so-called {\it modular} arc factorization of $M_k$ \cite{DKS,u2f,Hcs}, again mentioned in relation to item 2 in Section~\ref{2f}.

\section{Permutations associated to Dyck words}\label{section8}

Let us associate a $2k$-permutation $\pi$ to each anchored $k$-blown Dyck nest
$F$, or word $f$, (and to the $\mathbb{Z}_{2k+1}$-class of $V(O_k)$ they represent), via the following procedure (compare \cite{DO,Hcs}):

\begin{enumerate}\item
Set parentheses or commas between each two entries of $f$, so that the four substrings
$$\begin{array}{rrrrrrrrl}
 ``01"&,&``10"&,&``00"&\mbox{and}&``11"&&\mbox{ are transformed into the substrings}\\
 ``0,1"&,&``1)(0"&,&``0(0"&\mbox{and}&``1)1"&,&\mbox{ respectively, resulting in a string }f'.
 \end{array}$$
Add a terminal parenthesis to $f'$, so that the last ``$1$" in $f'$ is transformed into ``$1)$". Denote by $g$ the string resulting from such addition of a closing parenthesis to $f'$.
\item
By proceeding from left to right, replace the bits of $g$ by the successive integers from 1 to $|g|$, keeping all pre-inserted parentheses and commas of $g$ in their position. This yields a version $h_0$ of $g$.
 \item
 Set $h_0$ as a concatenation $(w_1)|(w_2)|\cdots|(w_t)$ of expressions $(w_i)$, ($1\le i\le t$), the terminal ``)" of each $(w_i)$  being the closing ``)" nearest to its opening  ``(".  Let $w'_i$ be the number string obtained from $w_i$ by removing parentheses and commas. For $i=1,\ldots,t$,
perform a recursive step ${\mathcal R}$ consisting in transforming $w'_i$ into its reverse substring $w''_i$ and then resetting $w''_i$ in place of $w'_i$ in $(w_i)$, with the parentheses and commas of $(w_i)$ kept in place. Denote the resulting expression by ${\mathcal R}(w_i)$.
This yields a string $h_1={\mathcal R}(w_1)|{\mathcal R}(w_2)|\cdots|{\mathcal R}(w_t).$

\begin{table}[htp]
$$\begin{array}{|c|c|c|c|c|}\hline
b(0)\rightarrow 00&b(1)\rightarrow 01&b(2)\rightarrow 10&b(3)\rightarrow 11&b(4)\rightarrow 12\\\hline
(1(2(3,3)2)1) & (2(3,3)2)(1,1) & (1(3,3)(2,2)1) & (3,3)(1(2,2)1) & (3,3)(2,2)(1,1)\\\hline
(0(0(0,1)1)1) & (0(0,1)1)(0,1) & (0(0,1)(0,1)1) &  (0,1)(0(0,1)1) & (0,1)(0,1)(0,1)  \\\hline
(1(2(3,4)5)6) & (1(2,3)4)(5,6) & (1(2,3)(4,5)6) &  (1,2)(3(4,5)6) & (1,2)(3,4)(5,6)  \\
(6(5(4,3)2)1) & (4(3,2)1)(6,5) & (6(5,4)(3,2)1) &  (2,1)(6(5,4)3) & (2,1)(4,3)(6,5)  \\
(6(2(3,4)5)1) & (4(2,3)1)(6,5) & (6(4,5)(2,3)1)  &  (2,1)(6(4,5)3) &  \\
 (6(2(4,3)5)1) &                       &                        &                        &\\\hline
p=624351 & p=423165 & p=645231 &  p=216453 & p=214365  \\
\iota=123456&\iota=123456&\iota=123456&\iota=123456&\iota=123456\\
\pi=624351 & \pi=423165 & \pi=645231 & \pi=216453 & \pi=214365\\\hline
\end{array}$$
$$\begin{array}{|c|c|c|c|}\hline
b(1)\rightarrow 001&b(2)\rightarrow 010&b(7)\rightarrow 110&b(10)\rightarrow 120\\\hline
(2(3(4,4)3)2)(1,1)&(1(3(4,4)3)(2,2)1)&(1(3,3)(2(4,4)2)1)&(1(4,4)(3,3)(2,2)1)\\\hline
(0(0(0,1)1)1)(0,1)&(0(0(0,1)1)(0,1)1)&(0(0,1)(0(01)1)1) &(0(0,1)(0,1)(0,1)1)\\\hline
(1(2(3,4)5)6)(7,8)&(1(2(3,4)5)(6,7)8)&(1(2,3)(4(5,6)7)8)&(1(2,3)(4,5)(6,7)8)\\
(6(5(4,3)2)1)(8,7)&(8(7(6,5)4)(3,2)1)&(8(7,6)(5(4,3)2)1)&(8(7,6)(5,4)(3,2)1)\\
(6(2(3,4)5)1)(8,7)&(8(4(5,6)7)(2,3)1)&(8(6,7)(2(3,4)5)1)&(8(6,7)(4,5)(2,3)1)\\
(6(2(4,3)5)1)(8,7)&(8(4(6,5)7)(2,3)1)&(8(6,7)(2(4,3)5)1)&\\\hline
       p=62435187&        p=84657231& p=86724351&p   =86745231\\
\iota=12345678&\iota=12345678 &\iota=12345678&\iota=12345678\\
   \pi=62435187&\pi=86724351& \pi      =84657231&\pi   =86745231\\\hline
\end{array}$$
\caption{Examples of determination of the permutation $\pi$}
\label{exapp}
\end{table}

\item
For $i\in[1,t]$, let ${\mathcal R}(w_i)=(a_{i,1}^1\eta_{i,1}^1b_{i,1}^1)|(a_{i,2}^1\eta_{i,2}^1b_{i,2}^1)|\cdots|(a_{i,t_i}^1\eta_{i,t_i}^1b_{i,t_i}^1)$, where $a_{i,j},b_{i,j}\in\mathbb{N}$ and $\eta_{i,j}^1=(w_{i,j})$ has terminal ``)" being the closing ``)" nearest to its opening ``(". Apply the treatment of the $(w_i)$'s  in item 3 to each $\eta_{i,j}=(w_{i,j})\ne$``(,)", for $j\in[1,t_i]$. Replace the resulting strings ${\mathcal R}(w_{i,j})$ in place of the corresponding $(w_{i,j})$ in ${\mathcal R}(w_i)$, yielding a modified version ${\mathcal R}^2(w_i)$ of ${\mathcal R}(w_i)$. Let $h_2={\mathcal R}^2(w_1)|{\mathcal R}^2(w_2)|\cdots|{\mathcal R}^2(w_t)$.
\item
Each ${\mathcal R}(w_{i,j})$ is a concatenation of terms of the form $a^2_I\eta^2_Ib^2_I$ with $I=\{i,j_1,j_2\}$, where $j_1:=j$. In each such concatenation, the strings $\eta^2_I\ne$``(,)" are of the form $(w_I)$ and must be treated as $(w_{i,j})$ is in item 4 (or $(w_i)$ in item 3), producing a modified string ${\mathcal R}(w_I)$ that forms part of the subsequent string $h_3$. Eventually ahead, to pass from $h_{\ell-1}$ to $h_\ell$ ($\ell>3$), each ${\mathcal R}(w_I)$ in $h_{\ell-1}$ with $I=\{i,j_1,\ldots,j_{\ell-2}\}$ would be a concatenation of terms of the form $a^{\ell-1}_{I'}|\eta^{\ell-1}_{I'}|b^{\ell-1}_{I'}$ with $I'=\{i,j_1,\ldots,j_{\ell-1}\}$. In each such concatenation, those $\eta^{\ell-1}_{I'}\ne$``(,)" would be of the form $(w_{I'})$, to be treated again as in items 3-4.
\item
A sequence $(h_0,\ldots,h_{s+1})$ is eventually obtained for some $s\ge 0$ when all innermost expressions $(w_I)=(a,a\pm 1)$ with $a,a\pm 1\in[1,2k]$ have been processed.
Disregarding parentheses and commas in $h_{s+1}$ yields a $2k$-string $g'$ and an assignment $i\rightarrow p(i)$, ($i\in[1,2k]$), by making correspond the places $i\in[1,2k]$ of $g'$ to the values in the places of $g'$.
Define $\pi=p^{-1}$, the inverse $2k$-permutation of $p=(p(1)p(2)\cdots p(2k))$.
\end{enumerate}

\begin{example} Table~\ref{exapp} illustrates the determination of the permutation $\pi$ for the five cases with $k=3$ and just four of the 14 cases with $k=4$, (exemplifying  that not necessarily $\pi=p$). Each such case is headed by an indication $b(n)\rightarrow b'(n)=0^{k-\ell(n)}|b(n)$, where $\ell(n)$ is the length of $b(n)$ and $k$ is the length of $b'(n)$. The second and third lines of Table~\ref{exapp} show $F'(b'(n))$ and $f'(b'(n))$ with parentheses and commas as in item 1 of the procedure. The rest of each case follows items 2-6 in order to produce $p$ expressed without parentheses or commas, followed by the identity permutation $\iota=12\cdots (2k)$ to ease visualizing $\pi$ as the inverse permutation of $p$, in the final line of each case of the table.
\end{example}

\section{Uniform 2-factors of the odd graphs}\label{1f}

\begin{table}[htp]
$$\begin{array}{|l||l|l||l|l|l|l|l|}\hline
^{k=1}_{n=0}\!&^{k=2}_{n=0}\!&^{k=2}_{n=1}\!&^{k=3}_{n=0}\!&^{k=3}_{n=1}\!&^{k=3}_{n=2}\!&^{k=3}_{n=3}\!&^{k=3}_{n=4}\\
001\!&00011\!&00101\!&0\ol{00011{\ul1}}|\xi_{1^\epsilon}^3 \!&0\ol{0{\ul0}1101}|{\ul{\xi}}_3^2\!&0\ol{00101{\ul1}}|\xi_{1^\epsilon}^3\!&0\ol{0100{\ul1}1}|\xi_3^2\!&0\ol{{\ul0}10101}|\xi_4^3\\
          \!&          \!&                                          \!&0\ol{0001{\ul1}1}|\xi_{1^\epsilon}^1                              \!  &                     \! &                  \! &           \!&\\\hline
0\ul{1}1|_1\!&0\ul{1}221|_1\!&022\ul{1}1|_{3}\!&0\ul{1}233{2}{1}|_{100}\!&02{3}32\ul{1}1|_{510}\!&0\ul{1}3322{1}|_{120}\!&022\ul{1}3{3}1|_{330}\!&0{3}322\ul{1}1|_{540}\\
10\ul{1}|_2\!&211\ul{0}1|_3\!&2101\ul{2}|_{4}\!&32210\ul{1}3|_{524}\!&321012\ul{3}|_{603}\!&322\ul{1}103|_{345}\!&31022\ul{1}3|_{532}\!&321102\ul{3}|_{614}\\
\ul{1}10|_0\!&11\ul{0}22|_2\!&1\ul{2}210|_{1}\!&1\ol{10\ul{3}322}|_{342}^{0\downarrow2}\!&1\ul{2}33210|_{106}\!&11\ul{0}2332|_{212}\!&1\ol{331\ul{0}22}|_{434}^{1\downarrow0}\!&133\ul{2}210|_{326}\\
          \!&1022\ul{1}|_4\!&10\ul{1}22|_{2}\!&2110\ul{3}32|_{443}\!&211\ul{0}233|_{313}\!&21331\ul{0}2|_{535}\!&210133\ul{2}|_{622}\!&2101\ul{2}33|_{402}\\
          \!&\ul{2}1102|_0\!&\ul{2}2110|_{0}\!&22\ul{1}1033|_{244}\!&22\ul{1}3310|_{236}\!&2210\ul{1}33|_{423}\!&2\ul{3}32101|_{105}\!&2\ul{3}32110|_{116}\\
          \!&                     \!&                     \!&102332\ul{1}|_{601}\!&1\ol{022\ul{1}33}|_{431}^{0\uparrow0}\!&103322\ul{1}|_{641}\!&10\ul{1}2332|_{201}\!&1\ol{0\ul{1}3322}|_{221}^{1\uparrow3}\\
          \!&                     \!&                     \!&\ul{3}\ol{321012}|_{{004}}^{1\downarrow4}\!&\ul{3}310221|_{033}\!&\ul{3}\ol{321102}|_{015}^{0\downarrow1}\!&\ul{3}322101\:_{025}&\ul{3}322110|_{006}\\\hline
\end{array}$$
\caption{Uniform 2-factors provided by permutations $\pi$ for $k=1,2,3$.}
\label{uniform}
\end{table}

Departing from each anchored $k$-blown Dyck word taken as a vertex $v$ of $O_k$, an oriented path $\overrightarrow{P^{2k}_{\!v}}$ of length $2k$ is grown in $2k$ stages $v=v_0$, $\overrightarrow{P_v^1}=v_0v_1$, $\overrightarrow{P_v^2}=v_0v_1v_2$, $\ldots$, $\overrightarrow{P^{2k}_{\!v}}=v_0v_1\cdots v_{2k}$ by traversing successively from $v_i$ ($i\in[0,2k-1]$) the edge arc whose assigned color, according to Remark~\ref{assign}, is the corresponding entry of the reversed permutation $rev(\pi)$ of $\pi$. Now, the terminal vertex $v_{2k}$ of $\overrightarrow{P^{2k}_{\!v}}$ is at distance 1 from $v$ by means of an arc $\overrightarrow{e_v}$ in the arc factor $\mathcal F_0$. An oriented $(2k+1)$-cycle $\overrightarrow{C_v^k}$ in $O_k$ is created by adding $\overrightarrow{e_v}$ to $\overrightarrow{P^{2k}_{\!v}}$. Lifting such $\overrightarrow{C_v^k}$ to $M_k$ via $\Psi_k^{-1}$ yields an oriented $2(2k+1)$-cycle $\overrightarrow{_MC_v^k}$ in $M_k$ containing all pairs of opposite vertices $(w,\aleph(w)$) (i.e, at distance $k$ from each other along $\overrightarrow{_MC_v^k}$ via two internally disjoint  paths, one oriented, the other one anti-oriented). This provides $O_k$ (resp., $M_k$) with a 2-factor of ${\mathcal C}_k$ components, all as
oriented cycles of uniform length $2k+1$ (resp., $2(2k+1)$) \cite{DO,u2f,Hcs}.

\begin{table}[htp]
$$\begin{array}{||r|r|r|r||r|r|r|r||r|r|r|r||}\hline\hline
i&n_0^i&s^i&\gamma^i&i&n_0^i&s^i&\gamma^i&i&n_0^i&s^i&\gamma^i\\\hline\hline
0& 0&1&*&14&42&1&5&28&84&1&4\\
1& 2&2&2&15&44&2&2&29&86&2&2\\
2&5&1&3&16&47&1&3&30&89&1&3\\
3& 7&2&2&17&49&2&2&31&91&2&2\\
4&10&3&2&18&52&3&3&32&94&3&2\\
5&14&1&4&19&56&1&4&33&98&1&3\\
6&16&2&2&20&58&2&2&34&100&2&2\\
7&19&1&3&21&61&1&3&35&103&3&2\\
8&21&2&2&22&63&2&2&36&107&4&2\\
9&24&3&2&23&66&3&2&37&112&1&3\\
10&28&1&3&24&70&1&3&38&114&2&2\\
11&30&2&2&25&72&2&2&39&117&3&2\\
12&33&3&2&26&75&3&2&40&121&4&2\\
13&37&4&2&27&79&4&2&41&126&5&2\\\hline\hline
\end{array}
\qquad
\begin{array}{||r|r|r|r|r||r||}\hline\hline
\;\;\;3&\;\;\;2&\;\;\;2&\;\;\;2&\;\;\;2&\;\;\;2\\\hline\hline
0&-3&-4&-5&-6&0\\
0&1&2&3&4&-2\\
&0&1&2&3&-3\\
&&0&1&2&-4\\
&&&0&1&-5\\
&&&&0&-6\\
&&&&&0\\\hline\hline
\end{array}
$$
\caption{Table of initial values of $n_0$, $s$ and $\gamma(n_0)$. Final data of Table~\ref{of}}
\label{quilombo}
\end{table}

\begin{example} Table~\ref{uniform} is headed by  $k=1,2,3$ and $n\in[0,{\mathcal C}_k-1]$ followed by the corresponding anchored Dyck words $0f^k(n)$; for $k=3$ it contains information clarified in Section~\ref{Ham}. 
Below its second horizontal line, the table presents each $\overrightarrow{C_v^k}+=\overrightarrow{P_v^k}+\overrightarrow{e_v^k}
$ in vertical fashion, with each pair of contiguous downward lines, say $\chi,\chi'$, representing an arc whose $j$-th entry is underlined, where $j$, shown as a subindex to the right, is the position containing the
pair of $k$-supplementary entries in $\chi,\chi'$. The vertical column of such subindices $j$ conform the reversed permutation $rev(\pi_v)$ of $\pi_v$ associated to the anchored $k$-blown Dyck word $v$, for each such word $v$ in $V(O_k)$. For each value of $k$, the columns of $(2k+1)$-tuples $\chi$ will be denoted $L(n)$ ($0\le n<{\mathcal C}_k$). For $k=3$, the subindex $j$ of each $(2k+1)$-tuple $\chi$ in a vertical list $L(n)$ is further extended, first with the value $m$ of the TRGS $b(m)$ such that $0F^k(m)$ is the Dyck nest representing the corresponding $\mathbb{Z}_{2k+1}$-class $[0F^k(m)]$ of $\chi$ in $V(O_k)$, and second with the index $j'$ such that $\chi=\,^{j'}\!F^k(m)$, in the notation of (\ref{^0F}). Additional underlined entries and superindices with middle up-or-down vertical arrows are explained in Section~\ref{Ham}.
\end {example}

\section{Partitions of odd-graph vertex sets}\label{2f}

 So far, we have two different partitions of $V(O_k)$ into $(2k+1)$-subsets. In terms of anchored $k$-blown Dyck nests, these partitions are:
\begin{enumerate}
\item the $\mathbb{Z}_{2k+1}$-classes $[0F^k(n)]$ of $V(O_k)$, for $0\le n<{\mathcal C}_k$ in Sections~\ref{section6}-\ref{section8};
\item the vertex sets $V\left(\overrightarrow{C_v^k}\right)$, where $v=0F^k(n)$, for $0\le n<{\mathcal C}_k$ in Section~\ref{1f}, from \cite{u2f}.
\end{enumerate}

Item 1 refers to the {\it algebraic partition} of $V(O_k)$ that leads to the determination of Hamilton cycles in $M_k$ via the {\it lexical} 1-factorization of $M_k$  \cite{D2,D1,gmn,M}. Here, the $\mathbb{Z}_{2k+1}$-classes of $O_k$ correspond via the inverse image $\Psi_k^{-1}$ to the dihedral classes, or ${\mathbb D}_{2k+1}$-classes, of $M_k$, where ${\mathbb D}_{2k+1}$ is the dihedral group with $2(2k+1)$ elements in which the cyclic group ${\mathbb Z}_{2k+1}$ appears as a subgroup of index 2.

\begin{table}[htp]
$$\begin{array}{||r|r|r|r|r||r|r|r|r|r||r|r|r|r|r||}\hline\hline
n^i_j&b()&\rho()&\gamma()&h()&n^i_j&b()&\rho()&\gamma()&h()&n^i_j&b()&\rho()&\gamma()&h()\\\hline\hline
0^0_0&0&-&-&-&14^5_0&1000&0&4&0&28^{10}_0&1200&19&3&0\\
1^0_1&  1&0&1&0&15^5_1&1001&14&1&0&29^{10}_1&1201&28&1&0\\
2^1_0&10&0&2&0&16^6_0&1010&14&2&0&30^{11}_0&1210&28&2&-3\\
3^1_1&11&2&1&-2&17^6_1&1011&16&1&-2&31^{11}_1&1211&30&1&1\\
4^1_2&12&3&1&0&18^6_2&1012&17&1&0&32^{11}_2&1212&31&1&0\\
5^2_0&100&0&3&0&19^7_0&1100&14&3&-4&33^{12}_0&1220&30&2&-4\\
6^2_1&101&5&1&0&20^7_1&1101&19&1&0&34^{12}_1&1221&33&1&2\\
7^3_0&110&5&2&-3&21^8_0&1110&19&2&1&35^{12}_2&1222&34&1&2\\
8^3_1&111&7&1&1&22^8_1&1111&21&1&-3&36^{12}_3&1223&35&1&0\\
9^3_2&112&8&1&0&23^8_2&1112&22&1&0&37^{13}_0&1230&33&2&0\\
10^4_0&120&7&2&0&24^9_0&1120&21&2&0&38^{13}_1&1231&37&1&-2\\
11^4_1&121&10&1&-2&25^9_1&1121&24&1&-2&39^{13}_2&1232&38&1&-3\\
12^4_2&122&11&1&-3&26^9_2&1122&25&1&1&40^{13}_3&1233&39&1&-4\\
13^4_3&123&12&1&0&27^9_3&1123&26&1&0&41^{13}_4&1234&40&1&0\\\hline\hline
\end{array}$$
\caption{The first 42 TRGS's $b(n^i_j)$ of ${\mathcal S}$ with associated data $n^i_j$, $\rho(n^i_j)$, $\gamma(n^i_j)$ and $h(n^i_j)$.}
\label{dia23}
\end{table}

In fact, each anchored $k$-blown Dyck word $0f^k(n)$ is a binary $(2k+1)$-string of weight $k$ whose support is a vertex of $O_k$ as well as an element of $L_k$, while $\aleph(0f(n))$ is an element of $L_{k+1}$. Note that both the pair $\{0f(n),\aleph(0f(n))\}$ and the ${\mathbb Z}_{2k+1}$-class of $0f(n)$ in $L_k$ ($=V(O_k)$) generate together the $\mathbb{D}_{2k+1}$-class of $0f(n)$ of $V(M_k)$. Thus, $0f(n)$ represents both a ${\mathbb Z}_{2k+1}$-class of $V(O_k)$ and a ${\mathbb D}_{2k+1}$-class of $V(M_k)$.

Item 2 refers to the {\it graph-theoretical} partition of $V(O_k)$ that leads to the determination of Hamilton cycles both in $O_k$ and $M_k$ ($k>3$) \cite{Hcs}
via the {\it modular} arc factorization of $M_k$ mentioned at the end of Section~\ref{arcf}. 
Note that the Petersen graph $M_3$ is hypo-hamiltonian, a constraint for Section~\ref{Ham} and its Theorem~\ref{L6} that reformulate those determinations.

Let $(P^0,P^1,P^2,\ldots)$ be a partition of $V(T)$ into {\it threads} $P^i$, each thread inducing a path $T[P^i]$
with an initial vertex $b(n_0^i)$ such that $\gamma(n_0^i)>1$ and its remaining vertices $b(n_j^i)=b(n_0^i+j)$ such that $\gamma(n_j^i)=\gamma(n_0^i+j)=1$, for $0<j\le s^i$,
where the length $s^i$ of $P^i$ is maximal. Here, the indices $n_0^i$ form an integer sequence $(n_0^0,n_0^1,n_0^2,\ldots)$,
as shown vertically on the second, sixth and tenth columns on the triptych left of Table~\ref{quilombo}.
The induced paths $T[P^i]$ have respective lengths $s^i$ and values $\gamma^i=\gamma(n_0^i)$ shown subsequently in the triptych.

Table~\ref{dia23} shows the first 42 TRGS $b(n)=b(n^i_j)$, with the columns (divided again as in a triptych) headed $n^i_j$, $b(n^i_j)$, $\rho(n^i_j)$, $\gamma(n^i_j)$ and $h(n^i_j)$,
where $h(.)$ is introduced in Observation~\ref{obs}.

We reunite the threads $P^i$ into {\it braids}, which are subsets $Q^\ell$ ($0\le \ell\in\mathbb{N}$) of $V(T)$, each inducing a maximal connected ordered subtree
with initial vertex $b(m_0^\ell)$ such that $\gamma(m_0^\ell)>2$ and its remaining vertices $b(m_j^\ell)$ such that $\gamma(m_j^\ell)=\gamma(m_0^\ell+j)\in\{1,2\}$, for $0<j\le \sum_{P^i\subseteq Q^\ell}s^i$. This yields a partition $\{Q^\ell\}$ of $V(T)$ coarser than $\{P^i\}$.

\section{Clones of Dyck nests}\label{sign}

\begin{table}[htp]
$$\begin{array}{||c||r|r||r|r||c||r|r||r|r||r||}\hline\hline
k&\rho(n)&n&b(\rho(n))&b(n)&\gamma(n)&0F(\rho(n))&0F(n)&\sigma(\rho(n))&\sigma(n)&\sigma_{\gamma(n)}(n)\\\hline\hline
1&*&0&*&0&*&0&011&*&1&\\\hline
2&0&1&0&1&1&0{\ul 1}{\it 22}1&0{\it 22}{\ul 1}1&1&0&0\\\hline
3&0&2&00&10&2&01{\ul 2}{\it 33}21&01{\it 33}{\ul 2}21&12&02&0\\
3&2&3&10&11&1&0{\ul{133}}{\it 22}1&0{\it 22}{\ul{133}}1&02&01&k-2\\
3&3&4&11&12&1&0{\ul{221}}{\it 33}1&0{\it 33}{\ul{221}}1&01&00&0\\\hline
4&0&5&000&100&3&012{\ul 3}{\it 44}321&012{\it 44}{\ul 3}321&123&023&0\\
4&5&6&100&101&1&0{\ul 1}{\it 244332}1&0{\it 244332}{\ul 1}1&023&020&0\\
4&5&7&100&110&2&01{\ul{244}}{\it 33}21&01{\it 33}{\ul{244}}21&023&013&k-3\\
4&7&8&110&111&1&0{\ul{133}}{\it 2442}1&0{\it 2442}{\ul{133}}1&013&011&1\\
4&8&9&111&112&1&0{\ul{24421}}{\it 33}1&0{\it 33}{\ul{24421}}1&011&010&0\\
4&7&10&110&120&2&01{\ul{332}}{\it 44}21&01{\it 44}{\ul{332}}21&013&003&0\\
4&10&11&120&121&1&0{\ul{14433}}{\it 22}1&0{\it 22}{\ul{14433}}1&003&002&k-2\\
4&11&12&121&122&1&0{\ul{22144}}{\it 33}1&0{\it 33}{\ul{22144}}1&002&001&k-3\\
4&12&13&122&123&1&0{\ul{33221}}{\it 44}1&0{\it 44}{\ul{33221}}1&001&000&0\\\hline\hline
\end{array}$$
\caption{Development of clone updates $\sigma_{\gamma(n)}(n)$ expressed in terms of $k$}
\label{signa}
\end{table}

Each anchored $k$-blown Dyck nest $0F(n)$ (arising from its anchored $k$-blown Dyck word $0f(n)$) is encoded by its {\it clone} or {\it signarture}, defined as
the vector $\sigma(n)=(\sigma_{k-1}(n),\ldots,\sigma_2(n),$\\ $\sigma_1(n))$  of halfway-distance floors $\sigma_j(n)$ between the first, $j_1$, and second, $j_2$, appearances of each integer $j$ assigned to the respective up- and down-steps of the path $g_f$, where $k>j>0$.

For example, if $j_1k_1k_2j_2$, (or $j_1(k-1)_1k_1k_2(k-1)_2j_2$), is a substring of $0F(n)$, (resp., $0F(n')$), then the halfway-distance floor of $j$ is $\lfloor d(j_1,j_2)\rfloor=\lfloor 3/2\rfloor=1$, (resp. $\lfloor d(j_1,j_2)\rfloor=\lfloor 5/2\rfloor=2$), engaged as the $j$-th entry of $\sigma(n)$, (resp., $\sigma(n')$), where $0\le n<{\mathcal C}_k$, (resp., $0\le n'<{\mathcal C}_k$). We will write $\sigma(n)=\sigma_{k-1}(n)\cdots \sigma_2(n)\sigma_1(n)$.

The growth of the tree $T$ of Dyck nests $F(n)$ will be simplified further from that given in the procedure of Section~\ref{sec4} by recursively updating just one entry of the parent clone $\sigma(\rho(n))$ to obtain the clone $\sigma(n)$.
This uses an equivalence of the set of anchored $k$-blown Dyck nests $F(n)$ and that of their clones $\sigma(n)$, provided in Theorem~\ref{signest}, below.

\begin{table}[htp]
$$
\begin{array}{||r|r||r||}\hline\hline
\;\;\;*&\;\;\;2&\\\hline
*&0&\\
0&-2&\\
&0&\\\hline\hline
3&2&2\\\hline
*&-3&\;\;\;0\\
0&1&-2\\
&0&-3\\
&&0\\\hline\hline
\end{array}
\qquad
\begin{array}{||r|r|r|r|r||r|r|r||r||}\hline\hline
\;\;\;* &2          &\;\;\;3 &\;\;\;2  &\;\;\;2 &       &        &          &\\\hline
\;\;\;*  &\;\;\;0   &\;\;\;0  &-3      &\;\;\;0  &     &          &         &\\
\;\;\;0  &-2       &\;\;\;0  &\;\;\;1 &-2       &      &        &          &\\
          &\;\;\;0  &          &\;\;\;0 &-3      &       &         &        &\\
          &          &          &         &\;\;\;0 &      &         &        &\\\hline\hline
\;\;\;4  &2       &\;\;\;3  &\;\;\;2 &\;\;\;2 &\;\;\;3&\;\;\;2&\;\;\;2&\;\;\;2\\\hline
\;\;\;0  &\;\;\;0 &-4       &\;\;\;1 &\;\;\;0 &\;\;\;0&-3     &-4    &0\\
\;\;\;0  &-2      &\;\;\;0  &-3      &-2      &\;\;\;0&\;\;\;1&\;\;\;2&-2\\
         &\;\;\;0  &           &\;\;\;0 &\;\;\;1 &        &\;\;\;0 &\;\;\;1&-3\\
         &          &           &         &0       &        &         &\;\;\;0&\;\;\;1\\
         &          &           &         &         &       &          &\;\;\;  &\;\;\;0\\\hline\hline
\end{array}$$
$$\begin{array}{|r|r|r|r|r|r|r|r|r|r|r|r|r|r||r|r|r|r||r||}
\hline\hline
*\!&\!2\!&\!3\!&\!2\!&\!2\!&\!4\!&\!2\!&\!3\!&\!2\!&\!2\!&\!3\!&\!2\!&\!2\!&\!2\!&\!\!&\!\!&\!\!&\!\!&\!\\\hline
\;\;\;*\!&\!\;\;\;0\!&\!\;\;\;0\!&\!-3\!&\!0 \!&\!0\!&\!0\!&\!-4\!&\!1\!&\!0\!&\!\;\;\;0\!&\!-3\!&\!-4\!&\!0\!&\!\!&\!\!&\!\!&\!\!&\!\\
\;\;\;0\!&\!-2\!&\!0\!&\!1\!&\!-2\!&\!\;\;\;0\!&\!-2\!&\!0\!&\!-3\!&\!-2\!&\!0\!&\!1\!&\!2\!&\!-2\!&\!\!&\!\!&\!\!&\!\!&\!\\
 \!&\!0  \!&\!  \!&\!0\!&\!-3 \!&\!\!&\!0\!&\! \!&\!0\!&\!1\!&\!\!&\!0\!&\!1\!&\!-3\!&\!\!&\!\!&\!\!&\!\!&\!\\
 \!&\!    \!&\!  \!&\!  \!&\!0 \!&\!\!&\!\!&\! \!&\!\!&\!0\!&\!\!&\!\!&\!0\!&\!-4\!&\!\!&\!\!&\!\!&\!\!&\!\\
\!&\!    \!&\!  \!&\!  \!&\!  \!&\!\!&\!\!&\! \!&\!\!&\!\!&\!\!&\!\!&\!\!&\!0\!&\!\!&\!\!&\!\!&\!\!&\!\\\hline\hline
5\!&\!2\!&\!3\!&\!2\!&\!2\!&\!4\!&\!2\!&\!3\!&\!2\!&\!2\!&\!3\!&\!2\!&\!2\!&\!2\!&\!\!&\!\!&\!\!&\!\!&\!\\\hline
\;\;\;0\!&\!\;\;\;0\!&\!\;\;\;0\!&\!-3\!&\!0 \!&\!     5\!&\! 0\!&\! 1\!&\!-4\!&\!0\!&\!\;\;\;0\!&\!-3\!&\!1\!&\!0\!&\!\!&\!\!&\!\!&\!\!&\!\\
\;\;\;0\!&\!     -2\!&\!      0\!&\!1\!&\!-2\!&\!\;\;\;0\!&\!-2\!&\!0\!&\!2\!&\!-2\!&\!0\!&\!1\!&\!-3\!&\!-2\!&\!\!&\!\!&\!\!&\!\!&\!\\
       \!&\!       0\!&\!        \!&\!0\!&\!-3 \!&\!      \!&\!  0\!&\! \!&\!0\!&\!-4\!&\!\!&\!0\!&\!-4\!&\!-3\!&\!\!&\!\!&\!\!&\!\!&\!\\
       \!&\!         \!&\!        \!&\!  \!&\! 0 \!&\!      \!&\!\!&\! \!&\!\!&\!0\!&\!\!&\!\!&\!0\!&\!1\!&\!\!&\!\!&\!\!&\!\!&\!\\
\!&\!    \!&\!  \!&\!  \!&\!  \!&\!\!&\!\!&\! \!&\!\!&\!\!&\!\!&\!\!&\!\!&\!0\!&\!\!&\!\!&\!\!&\!\!&\!\\\hline\hline
 \!&\! \!&\! \!&\! \!&\! \!&\!4\!&\!2\!&\!3\!&\!2\!&\!2\!&\!3\!&\!2\!&\!2\!&\!2\!&\!3\!&\!2\!&\!2\!&\!2\!&\!2\\\hline
      \!&\!      \!&\!\; \;\; \!&\!  \!&\! \!&\!      0\!&\! 0\!&\!-4\!&\! 1\!&\!0\!&\! -5\!&\! 2\!&\!1\!&\!0\!&\!\;\;\;0\!&\!-3\!&\!-4\!&\!-5\!&\!0\\
      \!&\!       \!&\!       \!&\!  \!&\! \!&\!\;\;\;0\!&\!-2\!&\!0\!&\!-3\!&\!-2\!&\!0 \!&\!-4\!&\!-3\!&\!-2\!&\!0\!&\!1\!&\!2\!&\!3\!&\!-2\\
      \!&\!       \!&\!       \!&\!  \!&\! \!&\!      \!&\!   0\!&\! \!&\!  0\!&\!  1\!&\!  \!&\!0 \!&\! 1\!&\! 2\!&\!\!&\!0\!&\!1\!&\!2\!&\!-3\\
      \!&\!       \!&\!       \!&\!  \!&\!  \!&\!      \!&\!    \!&\! \!&\!    \!&\!  0\!&\!  \!&\!   \!&\!0 \!&\!1\!&\!\!&\!\!&\!0\!&\!1\!&\!-4\\
\!&\!    \!&\!  \!&\!  \!&\!  \!&\!\!&\!\!&\! \!&\!\!&\!\!&\!\!&\!\!&\!\!&\!0\!&\!\!&\!\!&\!\!&\!0\!&\!-5\\
\!&\!    \!&\!  \!&\!  \!&\!  \!&\!\!&\!\!&\! \!&\!\!&\!\!&\!\!&\!\!&\!\!&\!0\!&\!\!&\!\!&\!\!&\!\!&\!0\\\hline\hline
\end{array}$$
\caption{Disposition of threads of $T$}
\label{of}
\end{table}

\begin{obs}\label{obs} In the transformation from $F(\rho(n))$ to $F(n)$ in Section~\ref{sec4}, $k_1k_2$ is present either in $X$ or in $Y$; if $k_1k_2$ is in $X$, then $\sigma_{\gamma(n)}(n)$ depends on $k$ because of blowing, and so $\sigma_{\gamma(n)}(n)=k+h(n)$, for some value $h(n)<0$; if on the contrary $k_1k_2$ is in $Y$, then $\sigma_{\gamma(n)}(n)$ does not depend on $k$, and so $\sigma_{\gamma(n)}(n)=h(n)$, for some $h(n)\ge 0$. In both cases, the remaining entries of $\sigma(n)$ other than $\sigma_{\gamma(n)}(n)$ are kept the same in $F(n)$ as in $F(\rho(n))$.\end{obs}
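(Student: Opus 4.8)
The plan is to trace the single castling step $q^{j}(F(\rho(n)))=W|X|Y|Z\mapsto W|Y|X|Z=F(n)$ of Section~\ref{sec4} through the definition of the clone, using that the $\ell$-th clone entry is $\lfloor d_{\ell}/2\rfloor$, where $d_{\ell}$ is the distance between the two occurrences of $\ell$. Since castling keeps $W$ and $Z$ fixed pointwise, translates the whole block $X$ rightward by $|Y|$, and translates $Y$ leftward by $|X|$, the distance $d_{\ell}$ is unchanged whenever the two occurrences of $\ell$ lie in a single one of the four blocks, or one in $W$ and the other in $Z$; for every such $\ell$ one gets $\sigma_{\ell}(n)=\sigma_{\ell}(\rho(n))$ at once (here and below $\sigma(\rho(n))$ is read off the blown parent $q^{j}(F(\rho(n)))$, matching Section~\ref{sec4}). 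So the whole statement reduces to two things: identifying where $k_{1}k_{2}$ sits, and proving that $\gamma(n)$ is the only value whose two occurrences avoid those ``safe'' configurations.

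For the first point I would invoke the first lemma of Section~\ref{section5}: the value $k=\lambda(n)$ occurs in $q^{j}(F(\rho(n)))$ as exactly one consecutive block $k_{1}k_{2}$, namely the leftmost up/down steps of the topmost occupied layer of the associated Dyck path. Since the prefix $W$ carries only the values accumulated by the initial ascent and the suffix $Z$ only those released by the final descent, all of them $\le\gamma(n)\le\lambda(n)-1<k$, the block $k_{1}k_{2}$ meets neither $W$ nor $Z$; and since the boundary between $X$ and $Y$ separates an entry of value $x$ from one of value $x+1$, two equal entries $k_{1}k_{2}$ cannot straddle it either, so $k_{1}k_{2}\subseteq X$ or $k_{1}k_{2}\subseteq Y$ (this is also forced by the well-definedness of castling, cf.\ \cite{D1,D2}). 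For the second point I would show that $W$ is the initial strict ascent $1\,2\cdots(\gamma(n)-1)$ and $Z$ the final strict descent $\gamma(n)\,(\gamma(n)-1)\cdots1$ of the path, so every value $\ell<\gamma(n)$ forms a $(W,Z)$-pair; that the value $\gamma(n)$ has its down-step at the first entry of $Z$ and its unique up-step somewhere inside $X$, hence is the only $(X,Z)$-pair; and that every value $\ell>\gamma(n)$ has both occurrences inside the middle block $XY$ and, in fact, inside a single one of $X,Y$ (the middle never dips below height $\gamma(n)$, and the $X|Y$ boundary again separates unequal values, so nothing straddles it). Granting this, $\gamma(n)$ is the only value disturbed by castling, and its distance drops by exactly $|Y|$ (its $X$-occurrence moves by $+|Y|$, its $Z$-occurrence is fixed); this proves the final sentence of the Observation.

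To read off the two regimes I would compare the level-$k$ castling with the level-$(k{+}1)$ one: moving up a level inserts the pair $(k{+}1)(k{+}1)$ into the current peak $k_{1}k_{2}$, enlarging by $2$ precisely the one of $X,Y$ that contains $k_{1}k_{2}$ and leaving the other block---and the offset of the up-step of $\gamma(n)$ inside $X$---untouched. If $k_{1}k_{2}\subseteq Y$, then $|X|$ and that offset are independent of $k$, so the new distance of $\gamma(n)$ is $k$-independent and $\sigma_{\gamma(n)}(n)=h(n)$ with $h(n)\ge0$ (it is $\lfloor d/2\rfloor$ for an integer $d\ge1$, the lower bound coming from the previous paragraph). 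If $k_{1}k_{2}\subseteq X$, then the growing peak block lies strictly between the two occurrences of $\gamma(n)$, so the new distance equals $2k+c$ for a constant $c$ independent of $k$, whence $\sigma_{\gamma(n)}(n)=k+\lfloor c/2\rfloor=:k+h(n)$; and $h(n)<0$ because $\sigma_{\gamma(n)}(n)\le\lfloor|X|/2\rfloor\le\lfloor(2k-2\gamma(n)-1)/2\rfloor=k-\gamma(n)-1<k$, using $|X|+|Y|=2k-2\gamma(n)+1$ and $|Y|\ge2$.

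The hard part will be the structural bookkeeping behind the second point above: showing, directly from the layer-by-layer labelling of Definition~\ref{dq}, the construction of $T$ in Definitions~\ref{treee}--\ref{rho}, and the bound $\gamma(n)\le\lambda(n)-1$, that in every $q^{j}(F(\rho(n)))$ arising from a castling step the prefix $W$ really is the strict initial ascent, the suffix $Z$ the strict final descent, the middle $XY$ a single excursion at heights $\ge\gamma(n)$, and that $\gamma(n)$ is the unique value with one occurrence in the middle and one in $Z$. This needs some care with the boundary cases $\gamma(n)=1$ (where $W$ is empty) and $\gamma(n)=\lambda(n)-1$ (where $c(n)=b(0)$ and $q^{j}(F(\rho(n)))$ is fully blown), but once it is in place the two displayed formulas and the claim that only $\sigma_{\gamma(n)}(n)$ changes are mechanical.
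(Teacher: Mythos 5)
The paper states this Observation without any proof (it is supported only by the worked examples in Table~\ref{signa}), so there is no argument of the paper's to measure yours against; the proposal has to stand on its own, and it does not. Your reduction in the first two paragraphs is the right idea: castling translates $X$ and $Y$ rigidly, so only a value whose two appearances are split between $X$ (or $Y$) and another block can have its occurrence-distance changed, and the $k$-dependence of $\sigma_{\gamma(n)}(n)$ under further blowing is governed by whether the peak $k_1k_2$ (where $q(\cdot)$ inserts the new pair) lies strictly between $\gamma(n)_1$ and $\gamma(n)_2$. The error is in your third paragraph, at the step ``if $k_1k_2\subseteq X$, then the growing peak block lies strictly between the two occurrences of $\gamma(n)$.'' This is false, and the paper's own data exhibit the failure: for $n=9$ (so $k=4$, $b(9)=112$, $\gamma(9)=1$), Table~\ref{signa} gives $F(\rho(9))=24421|33|1$ with $X=24421$ and $Y=33$, so $k_1k_2=44$ sits inside $X$; but $\gamma(9)_1=1_1$ is the \emph{last} entry of $X$, so the peak lies to the left of both appearances of $1$, the two $1$'s become adjacent in $F(9)=33244211$, and $\sigma_1(q^i(F(9)))=0$ for every $i\ge 0$. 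Hence $\sigma_{\gamma(9)}(9)=0$ is $k$-independent even though $k_1k_2\subseteq X$ --- consistent with $h(9)=0$ in Table~\ref{dia23} and with Theorem~\ref{thm1} (since $b(9)$ ends a thread), but inconsistent with your derivation and, for that matter, with the literal wording of the Observation. The dichotomy that actually governs the two regimes is whether $k_1k_2$ lies in the portion of $X$ strictly to the right of $\gamma(n)_1$ (equivalently, whether the peak is nested between $\gamma(n)_1$ and $\gamma(n)_2$), not whether it lies in $X$ at all; your argument, and the $h(n)<0$ estimate that follows it, need to be rebuilt around that criterion.

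Separately, everything in the proposal rests on the structural claims you defer to the final paragraph: that $W$ is the initial ascent $12\cdots(\gamma(n)-1)$, that $Z$ is the final descent $\gamma(n)(\gamma(n)-1)\cdots 1$, that every value $\ell<\gamma(n)$ forms a $(W,Z)$-pair, and that every value $\ell>\gamma(n)$ has both appearances inside a single one of $X,Y$. These hold in every tabulated instance, but they are precisely the content that requires an induction along $T$ through the castling procedure of Section~\ref{sec4}, and you explicitly leave them unproved. As it stands the proposal is a plausible plan containing one demonstrably false step and missing its main structural lemma.
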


\begin{theorem}\label{id} For each $n\ne 0$, $\sigma(\rho(n))$ and $\sigma(n)$ differ solely at the $\gamma(n)$-th entry.
 \end{theorem}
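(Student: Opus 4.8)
The plan is to trace carefully what the castling procedure of Section~\ref{sec4} does to the Dyck nest $F(\rho(n))$ and to read off the effect on the clone vector. Recall from item (b) that $F(\rho(n))$ is first $q$-iterated up to the length $2k=2\lambda(n)$ of $F(n)$, and then, writing $q^j(F(m))=W|X|Y|Z$ as in item (c), the castling transposes $X|Y\to Y|X$, producing $F(n)=W|Y|X|Z$. The key observation is that transposing the two contiguous blocks $X$ and $Y$ changes, for each integer $\ell$ appearing in $F(\rho(n))$, the distance $d(\ell_1,\ell_2)$ between its first and second occurrences \emph{only if} exactly one of $\ell_1,\ell_2$ lies in $X\cup Y$ while the other lies outside, or both lie in $X\cup Y$ but in opposite blocks. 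First I would show that, by the way $W$ and $Z$ are sized (lengths $\gamma(n)-1$ and $\gamma(n)$) and by the way $Y$ is chosen to start at the entry $x+1$ where $x$ is the leftmost entry of $X$, the only integer whose pair of occurrences is split by the castling is the integer $\gamma(n)$ itself; every other integer has both occurrences inside a single one of the four blocks $W,X,Y,Z$ (equivalently, inside $W$, inside $Z$, or inside the concatenated middle block $XY=YX$, since a block-transposition of $XY$ preserves membership-in-the-middle and only permutes positions within it cyclically by $|Y|$).

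The heart of the argument, then, is a bookkeeping lemma: in $F(\rho(n))=W|X|Y|Z$, the entry at which parent $b(\rho(n))$ and child $b(n)$ differ is, by Definition~\ref{treee}, the position $\gamma(n)$, and this is precisely the unique integer value whose two occurrences straddle the $X|Y$ boundary. I would make this explicit by matching the block decomposition to the ``$(a_j-1)\mapsto a_j$'' description of the parent-child relation in~(\ref{yu}): the integer $\gamma(n)=j$ is the rightmost-nonzero-entry position, so in the Dyck-nest language its two copies $j_1,j_2$ sit at the two ends of the middle block, with $j_1$ the leftmost entry of $X$ and $j_2$ somewhere in $Y$ or just past it. Under $X|Y\to Y|X$ the relative order of $j_1$ and $j_2$ is preserved but the number of entries strictly between them changes by $\pm(\,|X|-\text{(stuff)}\,)$; either way $d(j_1,j_2)$ and hence its floor $\sigma_{\gamma(n)}$ changes, while for every other integer $\ell\ne j$ the count of entries between $\ell_1$ and $\ell_2$ is literally unchanged because a block swap of the two halves of a region containing both endpoints does not alter the number of positions between them (it only slides the whole pair), and if both endpoints are in $W$ or both in $Z$ nothing moves at all. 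This is the step I expect to be the main obstacle: verifying cleanly, from the precise length bookkeeping of item (c), that no \emph{second} integer value gets its pair split, and that the integer $k=\lambda(n)$ (whose substring $k_1k_2$ always exists, by Lemma on p.~\pageref{z}) interacts with the decomposition exactly as described in Observation~\ref{obs}.

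Concretely, I would organize the proof as follows. (1) Fix $n\ne 0$ and set $k=\lambda(n)$, $m=\rho(n)$, $\gamma=\gamma(n)$; write $q^j(F(m))=W|X|Y|Z$ with the lengths as in item (c). (2) Observe that $F(n)=W|Y|X|Z$ and that, as multisets of positions, $W$, $Z$, and the middle region $M:=XY$ (as a set of $|X|+|Y|$ consecutive positions) are common to $F(m)$ and $F(n)$; only the internal arrangement of $M$ differs, and it differs by a cyclic block-shift. (3) Prove the combinatorial fact that for an integer $\ell$ with both occurrences in $W$, or both in $Z$, the floor-distance $\sigma_\ell$ is identical in $F(m)$ and $F(n)$ (nothing moved); for $\ell$ with both occurrences in $M$ but in the \emph{same} sub-block ($X$ or $Y$), again $\sigma_\ell$ is unchanged because a block swap translates the whole pair without changing their separation; and that the \emph{only} value $\ell$ with one occurrence in $X$ and the other in $Y$ (or split between $M$ and outside) is $\ell=\gamma$, this being forced by how $W,X,Y,Z$ were cut to reflect the single-digit change in~(\ref{yu}). (4) Conclude that $\sigma(m)$ and $\sigma(n)$ agree in every coordinate except the $\gamma$-th, which is Theorem~\ref{id}; the sign/size behaviour of that one changed coordinate is exactly the content of Observation~\ref{obs} (according to whether $k_1k_2\subseteq X$ or $k_1k_2\subseteq Y$), and I would cite that observation to finish, rather than recomputing. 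No new notation beyond what the excerpt introduces is needed.
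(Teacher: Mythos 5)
Your overall strategy coincides with the paper's: both arguments reduce Theorem~\ref{id} to the observation that castling is a transposition of two adjacent blocks in $W|X|Y|Z$, so that any integer $\ell$ whose two occurrences $\ell_1,\ell_2$ lie both in a fixed block, both in one of the transposed blocks, or one in $W$ and one in $Z$, keeps $d(\ell_1,\ell_2)$ and hence $\sigma_\ell$ unchanged, while the single remaining integer, which must be $\gamma(n)$, has its halfway-distance floor altered. The paper's own proof is in fact terser than your plan --- it simply asserts that the castling ``modifies just one of the halfway-distance floors'' --- so your proposal is a more explicit account of what has to be checked. (Incidentally, the paper's assertion that the $\gamma(n)$-th entry of $\sigma$ is ``increased in one unit'' refers to the TRGS entry $a_{\gamma(n)}$ in Definition~\ref{treee}, not to $\sigma_{\gamma(n)}$, which for instance drops from $3$ to $1$ at $n=8$; your version does not repeat that slip.)

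Two substantive points remain. First, the step you yourself flag as ``the main obstacle'' --- that no integer other than $\gamma(n)$ has its occurrence pair separated by the cuts of $W|X|Y|Z$ in a distance-changing way --- is precisely the content of the theorem, and neither you nor the paper supplies it. What is needed is the structure of $F(\rho(n))$ forced by the construction: $W=1_1 2_1\cdots(\gamma(n)-1)_1$, $Z=\gamma(n)_2(\gamma(n)-1)_2\cdots 1_2$, $\gamma(n)_1\in X$, and every integer of value greater than $\gamma(n)$ has both occurrences inside $X$ or both inside $Y$; the last fact uses that in a Dyck nest the two occurrences of $j$ sit at matched positions of the underlying Dyck word (the right-to-left, layer-by-layer assignment of Definition~\ref{dq} labels matched up/down step pairs identically), so the substring strictly between $j_1$ and $j_2$ is a union of complete pairs. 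Without this, your step (3) classifies the possible configurations but does not prove that only the $\gamma(n)$-configuration occurs. Second, a concrete misreading of item (c) of Section~\ref{sec4}: there $x$ is the \emph{value} of the leftmost entry of $X$, and $Y$ begins at the entry of value $x+1$; it is \emph{not} true that $\gamma(n)_1$ is the leftmost entry of $X$. For $n=10$ one has $F(\rho(10))=1|332|44|21$ with $\gamma(10)=2$: the leftmost entry of $X=332$ is $3_1$, while $\gamma(10)_1=2_1$ is the \emph{rightmost} entry of $X$ and $\gamma(10)_2$ is the first entry of $Z$. Carried out literally with ``$j_1$ the leftmost entry of $X$,'' your bookkeeping would misplace the split pair and would not by itself exclude that the integer $3$ (whose first occurrence really is the leftmost entry of $X$) also gets separated; the conclusion survives only because $3_2$ also lies in $X$, which is exactly the unproved structural fact above. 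Observation~\ref{obs} can then be cited, as you propose, only for the size of the one change, not for its uniqueness.
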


 \begin{proof}
 There is a sole difference between the parent $\sigma(\rho(n))$ of $\sigma(n)$ and $\sigma(n)$ itself, occurring at the $\gamma(n)$-th (right-to-left) position; its entry is just increased in one unit from $\sigma(\rho(n))$ to $\sigma(n)$, namely $\sigma_{\gamma(n)}(n)=\sigma_{\gamma(n)}(\rho(n))+1$. The effect of this via the permutation (transposition, or castling) of the inner strings $X$ and $Y$ from $F(\rho(n))=Z|X|Y|W$ to $F(n)=Z|Y|X|W$  (Section~\ref{sec4}) modifies just one of the halfway-distance floors $\sigma_j(n)=\lfloor d(j_1,j_2)/2\rfloor$ between the first appearance, $j_1$,  of the corresponding $j\in[0,k-1]$ in $F(n)$ and its second appearance, $j_2$, namely $\sigma_{\gamma(n)}(n)=\lfloor d(\gamma(n)_1,\gamma(n)_2)/2\rfloor$.
 \end{proof}

Table~\ref{signa} exemplifies how the procedure in Section~\ref{sec4} is solely determined by the integer entries $\sigma_{\gamma(n)}(n)$ of the corresponding permutation $\sigma(n)$ in Theorem~\ref{id}, supplying in the
\begin{enumerate}\item first column: the value of $k$ that depends on the value of $n$ in the third column;
\item second and third columns: the values of the parent index $\rho(n)$ and $n$ itself;
\item fourth and fifth columns: the TRGS $b(n)$ and the value of $\gamma(n)$, respectively;
\item sixth column: $0F(\rho(n))=0|Z|X|Y|W$; with $X$ underlined and $Y$ in Italics;
\item seventh column: $0F(n)=0|Z|Y|X|W$, with $X,Y$ as in the sixth column; 
\item last three columns: the values of $\sigma(\rho(n))$, $\sigma(n)$ and $\sigma_{\gamma(n)}(n)$, this last value given in terms of $k$ via Observation~\ref{obs}.
\end{enumerate}

\begin{theorem}\label{signest}
The correspondence that assigns each $n$-nest to its clone is a bijection.
\end{theorem}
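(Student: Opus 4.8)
The plan is to recast Theorem~\ref{signest} in the language of plane forests and then write down the inverse of the clone map explicitly; the only real content is injectivity (``the clone determines the nest''), since surjectivity onto the set of clones is tautological.

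First I would set up the dictionary implicit in Definition~\ref{dq}: a Dyck nest $F$ of length $2k$ is the same object as a plane forest $\Phi(F)$ on $k$ nodes, one node per arch of the Dyck path $g_f$, with an arch nested inside another exactly when the corresponding node is a descendant of the other; the labelling of Definition~\ref{dq} is then precisely the deterministic rule ``scan the nodes layer by layer in increasing order of depth, and within each layer from right to left, assigning $1,2,3,\dots$ in that order.'' Since the two occurrences of an integer $j$ in $F$ are the up- and down-step of one arch, one has $\sigma_j(n)=\lfloor d(j_1,j_2)/2\rfloor=c_j-1$, where $c_j$ is the number of nodes in the subtree of $\Phi(F)$ rooted at the node labelled $j$; moreover $c_k=1$ because the node labelled $k$ is a leaf (the integer $k$ occurs as $kk$), and every proper descendant of the node labelled $j$ lies in a strictly deeper layer, hence carries a label $>j$. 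As $F\mapsto\Phi(F)$ and the canonical labelling are manifestly bijective, it suffices to prove that a canonically labelled plane forest is determined by $(c_1,\dots,c_{k-1})$.

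The core step is a greedy reconstruction from the outside in. The roots of $\Phi(F)$ are the nodes $1,\dots,r$, where $r$ is the least index with $c_1+\cdots+c_r=k$ (the partial sums never exceed $k$ and first reach it when a complete set of root-subtrees has been listed), and, read left to right, the roots are labelled $r,r-1,\dots,1$. Inductively, suppose layer $d$ has been identified as a consecutive block of labels $a,a+1,\dots,b$ with known subtree sizes. By planarity the children of the layer-$d$ node labelled $a$ (its right-most node, hence the one of smallest label) are exactly the right-most nodes of layer $d+1$, so they occupy the smallest labels $b+1,b+2,\dots$ there, and their number $s_a$ is forced to be the least $m\ge 0$ with $c_{b+1}+\cdots+c_{b+m}=c_a-1$; one then treats the node labelled $a+1$, whose children begin at label $b+1+s_a$, and proceeds through $b$. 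This reveals layer $d+1$ as a consecutive label block together with all parent assignments, and since at each step the choice is the only one compatible with planarity, with the decreasing-label reading of each layer, and with the prescribed subtree sizes, the whole forest is pinned down. Hence the clone map is injective, so it is a bijection from the $n$-nests onto the set of clones; running the recursion backwards one reads off the image as exactly those sequences for which every forced prefix-sum step succeeds and all of $1,\dots,k$ get consumed, which can be packaged as an explicit prefix-sum condition. (An alternative route to injectivity would be to induct along the tree $T$: by Theorem~\ref{id} the clone changes between $\sigma(\rho(n))$ and $\sigma(n)$ only in coordinate $\gamma(n)$, by $+1$, and one argues from the description of $\gamma$ in Definition~\ref{mathbbn} and the thread/braid partition of Section~\ref{2f} that this incrementing pattern never produces a repeated clone along $T$.)

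The step I expect to be the main obstacle is proving rigorously that the reconstruction is \emph{forced}: this is a double induction, on the depth $d$ and, inside a fixed layer, on the node index, whose invariant is that the children of the nodes of layer $d$ occupy consecutive label-blocks arranged in the decreasing-label order of layer $d$. One must check that this invariant is preserved and that the greedy prefix-sums are consistent exactly for the sequences that genuinely arise from a forest — which is also precisely what is needed in order to state the image characterization rather than settle for ``bijection onto the set of clones''.
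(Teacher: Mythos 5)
Your proof is correct and follows essentially the same strategy as the paper's: both establish injectivity by exhibiting an explicit, forced reconstruction of the nest from its clone, the paper by greedily re-inserting the pairs $i_1,i_2$ (for $i=1,\dots,k-1$) into the $2k$-string from the right as outermost pairs at prescribed distances $2\sigma_i(n)+1$, and you by the equivalent layer-by-layer rebuilding of the underlying plane forest from the subtree sizes $c_j=\sigma_j(n)+1$. Your prefix-sum invariant arguably makes the ``each greedy choice is forced'' step more transparent than the paper's terse description of placing each pair ``as an outermost pair, only constrained by the presence of already replaced positions,'' but the underlying argument is the same.
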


\begin{proof} Let $b(n)=a_{k-1}\ldots a_2a_1$ be a TRGS. The anchored $k$-blown (or tight) Dyck nest $^0\!F(n)=0F(n)=0F_1\cdots F_{2k}$ 
has rightmost entry $F_{2k}=1_2$, so $\sigma_1(n)$ determines the position of $1_1$. For example, if $\sigma_1(n)=0$, then
$F_{2k-1}=1_1$, so $a_1$ is a local maximum. To obtain $0F(n)$ from $\sigma(n)$, we initialize a candidate for $F(n)$ as the $2k$-string $F(n,0)=00\cdots 0$.
Setting successively $1_2,1_1,2_2,2_1,\ldots,(k-1)_2,(k-1)_1$ instead of the zeros of $F(n,0)$ from right to left according to the indications $\sigma_i(n)$, for $i=1,2,\ldots,k-1$, is done in stages $F(n,i-1)\rightarrow F(n,i)$ by setting each pair $(i_1,i_2)$ as an outermost pair, only constrained by the presence of already replaced positions; after setting a value $i_1$ in the initial position, we restart if necessary on the right again with the replacement of the remaining zeros by the remaining pairs $(i_1,i_2)$ in ascending order from right to left.
This allows to recover $F(n)$ from the $\sigma(n)$'s by finally replacing the only resulting substring 00 in $F(n,k-1)$ by $k_1k_2$.
\end{proof}

\noindent We introduce a family $\{\Phi_j| j>0\} $ of subsequences of $\mathbb{N}$ defined by the following properties:

\begin{enumerate}
\item $\gamma(\Phi_1)$ is the subsequence of $\gamma(\mathbb{N})$ formed by all indices $\gamma(n)$ larger than 1.
\item The first term of $\Phi_j$ is 1 if j=1; and $n$ such that $b(n)$ is the smallest TRGS having suffix $(j-1)(j-1)$ for $j>1$.
\item Assume $n\in\Phi_j$, $b(n)=a_{k-1}\cdots a_1$ and either $a_1=0$ or $n'\notin\Phi_j\;\forall n'\in\mathbb{N}$ with
$b(n')=a_{k-1}\cdots a_2a'_1$, ($a'_1<a_1$). Then, $b(n)|j=b(n'')$ has $n''\in\Phi_j\; \forall j\in[0,a_1]$.
If so and $b(m)=a_{k-1}\cdots a_2(a_1+j')$ has $m\in\Phi_j\; \forall j'\ge1$, then $b(m)|j=b(m')$ has $m'\in\Phi_j$.
\item Each braid $Q^\ell=\{P^i\}_{i=u_\ell}^{t_\ell}$ has associated subsequence $\Gamma^\ell=(\iota,2,\ldots,2)$ in $\Gamma=\gamma(\mathbb{N})$, where $\iota>2$. If there exist $z$ penultimate terms $\gamma(n)=2$ in $\Gamma^\ell$ yielding maximal prefixes of common fixed length $y>0$ in the threads of $Q^\ell\setminus P^{t_\ell}$ with associated images in $h(\Phi_j)$ ending at $h(m)$, where $b(m)=a_{k-1}\cdots a_3(y+j)y)$\; $\forall j\in[0,z-1]$, then $m'\in\Phi_j$\; $\forall b(m')=a_{k-1}\cdots a_3(y+z)j'$ with
$j'\in(y,y+z]$, which yields a suffix of $P^{t_\ell}$ with $h$-image $\{h(\alpha_{j'})|j'\in(y,y+z]\}$.
\end{enumerate}

\begin{table}[htp]
$$\begin{array}{||cccccccccccccc||cccccccccccccc||ccccccccc||cccc||c||}\hline\hline\!\!\rm{*}\!&\!\!\rm{2}\!&\!\!\rm{3}\!&\!\!\rm{2}\!&\!\!\rm{2}\!&\!\!\rm{4}\!&\!\!\rm{2}\!&\!\!\rm{3}\!&\!\!\rm{2}\!&\!\!\rm{2}\!&\!\!\rm{3}\!&\!\!\rm{2}\!&\!\!\rm{2}\!&\!\!\rm{2}\!&\!\!\rm{5}\!&\!\!\rm{2}\!&\!\!\rm{3}\!&\!\!\rm{2}\!&\!\!\rm{2}\!&\!\!\rm{4}\!&\!\!\rm{2}\!&\!\!\rm{3}\!&\!\!\rm{2}\!&\!\!\rm{2}\!&\!\!\rm{3}\!&\!\!\rm{2}\!&\!\!\rm{2}\!&\!\!\rm{2}\!&\!\!\rm{4}\!&\!\!\rm{2}\!&\!\!\rm{3}
\!&\!\!\rm{2}\!&\!\!\rm{2}\!&\!\!\rm{3}\!&\!\!\rm{2}\!&\!\!\rm{2}\!&\!\!\rm{2}\!&\!\!\rm{3}\!&\!\!\rm{2}\!&\!\!\rm{2}\!&\!\!\rm{2}\!&\!\!\rm{2}\!\\\hline

\!\!^*\!&\!\!\!\!^\bullet\!\!&\!\!\!\!^\bullet\!\!&\!\!^\bullet\!&\!\!^\circ\!&\!\!^\bullet\!&\!\!^\circ\!&\!\!^\bullet\!&\!\!^\bullet\!&\!\!^\circ\!&\!\!^\circ\!&\!\!^\circ\!&\!\!^\bullet\!&\!\!^\circ\!&\!\!^\bullet\!&\!\!^\circ\!&\!\!^\circ\!&\!\!^\circ\!&\!\!^\circ\!&\!\!^\bullet\!&\!\!^\circ\!&\!\!^\bullet\!&\!\!^\bullet\!&\!\!^\circ\!&\!\!^\circ\!&\!\!^\circ\!&\!\!^\bullet\!&\!\!^\circ\!&\!\!^\circ\!&\!\!^\circ\!&\!\!^\circ\!&\!\!^\circ\!&\!\!^\circ\!&\!\!^\bullet\!&\!\!^\bullet\!&\!\!^\bullet\!&\!\!^\circ\!&\!\!^\circ\!&\!\!^\circ\!&\!\!^\circ\!&\!\!^\bullet\!&\!\!^\circ\!\vspace*{-1.5mm}\\
\!\!^\bullet\!&\!\!^\bullet\!&\!\!^\circ\!&\!\!^\bullet\!&\!\!^\circ\!&\!\!^\circ\!&\!\!^\circ\!&\!\!^\circ\!&\!\!^\bullet\!&\!\!^\circ\!&\!\!^\circ\!&\!\!^\circ\!&\!\!^\bullet\!&\!\!^\circ\!&\!\!^\circ\!&\!\!^\circ\!&\!\!^\circ\!&\!\!^\circ\!&\!\!^\circ\!&\!\!^\circ\!&\!\!^\circ\!&\!\!^\circ\!&\!\!^\bullet\!&\!\!^\circ\!&\!\!^\circ\!&\!\!^\circ\!&\!\!^\bullet\!&\!\!^\circ\!&\!\!^\circ\!&\!\!^\circ\!&\!\!^\circ\!&\!\!^\circ\!&\!\!^\circ\!&\!\!^\circ\!&\!\!^\bullet\!&\!\!^\bullet\!&\!\!^\circ\!&\!\!^\circ\!&\!\!^\circ\!&\!\!^\circ\!&\!\!^\bullet\!&\!\!^\circ\!\vspace*{-1.5mm}\\
\!\!\!&\!\!^\circ\!&\!\!\!&\!\!^\circ\!&\!\!^\bullet\!&\!\!\!&\!\!^\circ\!&\!\!\!&\!\!^\circ\!&\!\!^\bullet\!&\!\!\!&\!\!^\circ\!&\!\!^\bullet\!&\!\!^\circ\!&\!\!\!&\!\!^\circ\!&\!\!\!&\!\!^\circ\!&\!\!^\circ\!&\!\!\!&\!\!^\circ\!&\!\!\!&\!\!^\circ\!&\!\!^\bullet\!&\!\!\!&\!\!^\circ\!&\!\!^\bullet\!&\!\!^\circ\!&\!\!\!&\!\!^\circ\!&\!\!\!&\!\!^\circ\!&\!\!^\circ\!&\!\!\!&\!\!^\circ\!&\!\!^\circ\!&\!\!^\bullet\!&\!\!\!&\!\!^\circ\!&\!\!^\circ\!&\!\!^\bullet\!&\!\!^\circ\!\vspace*{-1.5mm}\\
\!\!\!&\!\!\!&\!\!\!&\!\!\!&\!\!^\circ\!&\!\!\!&\!\!\!&\!\!\!&\!\!\!&\!\!^\circ\!&\!\!\!&\!\!\!&\!\!^\circ\!&\!\!^\bullet\!&\!\!\!&\!\!\!&\!\!\!&\!\!\!&\!\!^\circ\!&\!\!\!&\!\!\!&\!\!\!&\!\!\!&\!\!^\circ\!&\!\!\!&\!\!\!&\!\!^\circ\!&\!\!^\bullet\!&\!\!\!&\!\!\!&\!\!\!&\!\!\!&\!\!^\circ\!&\!\!\!&\!\!\!&\!\!^\circ\!&\!\!^\bullet\!&\!\!\!&\!\!\!&\!\!^\circ\!&\!\!^\bullet\!&\!\!^\circ\!\vspace*{-1.5mm}\\
\!\!\!&\!\!\!&\!\!\!&\!\!\!&\!\!\!&\!\!\!&\!\!\!&\!\!\!&\!\!\!&\!\!\!&\!\!\!&\!\!\!&\!\!\!&\!\!^\circ\!&\!\!\!&\!\!\!&\!\!\!&\!\!\!&\!\!\!&\!\!\!&\!\!\!&\!\!\!&\!\!\!&\!\!\!&\!\!\!&\!\!\!&\!\!\!&\!\!^\circ\!&\!\!\!&\!\!\!&\!\!\!&\!\!\!&\!\!\!&\!\!\!&\!\!\!&\!\!\!&\!\!^\circ\!&\!\!\!&\!\!\!&\!\!\!&\!\!^\circ\!&\!\!^\bullet\!\vspace*{-1.5mm}\\
\!\!\!&\!\!\!&\!\!\!&\!\!\!&\!\!\!&\!\!\!&\!\!\!&\!\!\!&\!\!\!&\!\!\!&\!\!\!&\!\!\!&\!\!\!&\!\!\!&\!\!\!&\!\!\!&\!\!\!&\!\!\!&\!\!\!&\!\!\!&\!\!\!&\!\!\!&\!\!\!&\!\!\!&\!\!\!&\!\!\!&\!\!\!&\!\!\!&\!\!\!&\!\!\!&\!\!\!&\!\!\!&\!\!\!&\!\!\!&\!\!\!&\!\!\!&\!\!\!&\!\!\!&\!\!\!&\!\!\!&\!\!\!&\!\!^\circ\!\\
\hline\hline
\!\!\rm{6}\!&\!\!\rm{2}\!&\!\!\rm{3}\!&\!\!\rm{2}\!&\!\!\rm{2}\!&\!\!\rm{4}\!&\!\!\rm{2}\!&\!\!\rm{3}\!&\!\!\rm{2}\!&\!\!\rm{2}\!&\!\!\rm{3}\!&\!\!\rm{2}\!&\!\!\rm{2}\!&\!\!\rm{2}\!&\!\!\rm{5}\!&\!\!\rm{2}\!&\!\!\rm{3}\!&\!\!\rm{2}\!&\!\!\rm{2}\!&\!\!\rm{4}\!&\!\!\rm{2}\!&\!\!\rm{3}\!&\!\!\rm{2}\!&\!\!\rm{2}\!&\!\!\rm{3}\!&\!\!\rm{2}\!&\!\!\rm{2}\!&\!\!\rm{2}\!&\!\!\rm{4}\!&\!\!\rm{2}\!&\!\!\rm{3}
\!&\!\!\rm{2}\!&\!\!\rm{2}\!&\!\!\rm{3}\!&\!\!\rm{2}\!&\!\!\rm{2}\!&\!\!\rm{2}\!&\!\!\rm{3}\!&\!\!\rm{2}\!&\!\!\rm{2}\!&\!\!\rm{2}\!&\!\!\rm{2}\!\\\hline

\!\!^\circ\!&\!\!^\circ\!&\!\!^\circ\!&\!\!^\circ\!&\!\!^\circ\!&\!\!^\circ\!&\!\!^\circ\!&\!\!^\circ\!&\!\!^\circ\!&\!\!^\circ\!&\!\!^\circ\!&\!\!^\circ\!&\!\!^\circ\!&\!\!^\circ\!&\!\!^\bullet\!&\!\!^\circ\!&\!\!^\circ\!&\!\!^\circ\!&\!\!^\circ\!&\!\!^\bullet\!&\!\!^\circ\!&\!\!^\bullet\!&\!\!^\bullet\!&\!\!^\circ\!&\!\!^\circ\!&\!\!^\circ\!&\!\!^\bullet\!&\!\!^\circ\!&\!\!^\circ\!&\!\!^\circ\!&\!\!^\circ\!&\!\!^\circ\!&\!\!^\circ\!&\!\!^\bullet\!&\!\!^\bullet\!&\!\!^\bullet\!&\!\!^\circ\!&\!\!^\circ\!&\!\!^\circ\!&\!\!^\circ\!&\!\!^\bullet\!&\!\!^\circ\!\vspace*{-1.5mm}\\
\!\!^\circ\!&\!\!^\circ\!&\!\!^\circ\!&\!\!^\circ\!&\!\!^\circ\!&\!\!^\circ\!&\!\!^\circ\!&\!\!^\circ\!&\!\!^\circ\!&\!\!^\circ\!&\!\!^\circ\!&\!\!^\circ\!&\!\!^\circ\!&\!\!^\circ\!&\!\!^\circ\!&\!\!^\circ\!&\!\!^\circ\!&\!\!^\circ\!&\!\!^\circ\!&\!\!^\circ\!&\!\!^\circ\!&\!\!^\circ\!&\!\!^\bullet\!&\!\!^\circ\!&\!\!^\circ\!&\!\!^\circ\!&\!\!^\bullet\!&\!\!^\circ\!&\!\!^\circ\!&\!\!^\circ\!&\!\!^\circ\!&\!\!^\circ\!&\!\!^\circ\!&\!\!^\circ\!&\!\!^\bullet\!&\!\!^\bullet\!&\!\!^\circ\!&\!\!^\circ\!&\!\!^\circ\!&\!\!^\circ\!&\!\!^\bullet\!&\!\!^\circ\!\vspace*{-1.5mm}\\
\!\!\!&\!\!^\circ\!&\!\!\!&\!\!^\circ\!&\!\!^\circ\!&\!\!\!&\!\!^\circ\!&\!\!\!&\!\!^\circ\!&\!\!^\circ\!&\!\!\!&\!\!^\circ\!&\!\!^\circ\!&\!\!^\circ\!&\!\!\!&\!\!^\circ\!&\!\!\!&\!\!^\circ\!&\!\!^\circ\!&\!\!\!&\!\!^\circ\!&\!\!\!&\!\!^\circ\!&\!\!^\bullet\!&\!\!\!&\!\!^\circ\!&\!\!^\bullet\!&\!\!^\circ\!&\!\!\!&\!\!^\circ\!&\!\!\!&\!\!^\circ\!&\!\!^\circ\!&\!\!\!&\!\!^\circ\!&\!\!^\circ\!&\!\!^\bullet\!&\!\!\!&\!\!^\circ\!&\!\!^\circ\!&\!\!^\bullet\!&\!\!^\circ\!\vspace*{-1.5mm}\\
\!\!\!&\!\!\!&\!\!\!&\!\!\!&\!\!^\circ\!&\!\!\!&\!\!\!&\!\!\!&\!\!\!&\!\!^\circ\!&\!\!\!&\!\!\!&\!\!^\circ\!&\!\!^\circ\!&\!\!\!&\!\!\!&\!\!\!&\!\!\!&\!\!^\circ\!&\!\!\!&\!\!\!&\!\!\!&\!\!\!&\!\!^\circ\!&\!\!\!&\!\!\!&\!\!^\circ\!&\!\!^\bullet\!&\!\!\!&\!\!\!&\!\!\!&\!\!\!&\!\!^\circ\!&\!\!\!&\!\!\!&\!\!^\circ\!&\!\!^\bullet\!&\!\!\!&\!\!\!&\!\!^\circ\!&\!\!^\bullet\!&\!\!^\circ\!\vspace*{-1.5mm}\\
\!\!\!&\!\!\!&\!\!\!&\!\!\!&\!\!\!&\!\!\!&\!\!\!&\!\!\!&\!\!\!&\!\!\!&\!\!\!&\!\!\!&\!\!\!&\!\!^\circ\!&\!\!\!&\!\!\!&\!\!\!&\!\!\!&\!\!\!&\!\!\!&\!\!\!&\!\!\!&\!\!\!&\!\!\!&\!\!\!&\!\!\!&\!\!\!&\!\!^\circ\!&\!\!\!&\!\!\!&\!\!\!&\!\!\!&\!\!\!&\!\!\!&\!\!\!&\!\!\!&\!\!^\circ\!&\!\!\!&\!\!\!&\!\!\!&\!\!^\circ\!&\!\!^\bullet\!\vspace*{-1.5mm}\\
\!\!\!&\!\!\!&\!\!\!&\!\!\!&\!\!\!&\!\!\!&\!\!\!&\!\!\!&\!\!\!&\!\!\!&\!\!\!&\!\!\!&\!\!\!&\!\!\!&\!\!\!&\!\!\!&\!\!\!&\!\!\!&\!\!\!&\!\!\!&\!\!\!&\!\!\!&\!\!\!&\!\!\!&\!\!\!&\!\!\!&\!\!\!&\!\!\!&\!\!\!&\!\!\!&\!\!\!&\!\!\!&\!\!\!&\!\!\!&\!\!\!&\!\!\!&\!\!\!&\!\!\!&\!\!\!&\!\!\!&\!\!\!&\!\!^\circ\!\\\hline\hline

\!\!\!&\!\!\!&\!\!\!&\!\!\!&\!\!\!&\!\!\!&\!\!\!&\!\!\!&\!\!\!&\!\!\!&\!\!\!&\!\!\!&\!\!\!&\!\!\!&\!\!\rm{5}\!&\!\!\rm{2}\!&\!\!\rm{3}\!&\!\!\rm{2}\!&\!\!\rm{2}\!&\!\!\rm{4}\!&\!\!\rm{2}\!&\!\!\rm{3}\!&\!\!\rm{2}\!&\!\!\rm{2}\!&\!\!\rm{3}\!&\!\!\rm{2}\!&\!\!\rm{2}\!&\!\!\rm{2}\!&\!\!\rm{4}\!&\!\!\rm{2}\!&\!\!\rm{3}
\!&\!\!\rm{2}\!&\!\!\rm{2}\!&\!\!\rm{3}\!&\!\!\rm{2}\!&\!\!\rm{2}\!&\!\!\rm{2}\!&\!\!\rm{3}\!&\!\!\rm{2}\!&\!\!\rm{2}\!&\!\!\rm{2}\!&\!\!\rm{2}\!\\\hline

\!\!\!&\!\!\!\!\!\!&\!\!\!\!\!\!&\!\!\!&\!\!\!&\!\!\!&\!\!\!&\!\!\!&\!\!\!&\!\!\!&\!\!\!&\!\!\!&\!\!\!&\!\!\!&\!\!^\circ\!&\!\!^\circ\!&\!\!^\circ\!&\!\!^\circ\!&\!\!^\circ\!&\!\!^\circ\!&\!\!^\circ\!&\!\!^\circ\!&\!\!^\circ\!&\!\!^\circ\!&\!\!^\circ\!&\!\!^\circ\!&\!\!^\circ\!&\!\!^\circ\!&\!\!^\bullet\!&\!\!^\circ\!&\!\!^\bullet\!&\!\!^\bullet\!&\!\!^\circ\!&\!\!^\bullet\!&\!\!^\bullet\!&\!\!^\circ\!&\!\!^\circ\!&\!\!^\circ\!&\!\!^\circ\!&\!\!^\bullet\!&\!\!^\bullet\!&\!\!^\circ\!\vspace*{-1.5mm}\\
\!\!\!&\!\!\!&\!\!\!&\!\!&\!\!\!&\!\!\!&\!\!\!&\!\!\!&\!\!\!&\!\!\!&\!\!\!&\!\!\!&\!\!\!&\!\!\!&\!\!^\circ\!&\!\!^\circ\!&\!\!^\circ\!&\!\!^\circ\!&\!\!^\circ\!&\!\!^\circ\!&\!\!^\circ\!&\!\!^\circ\!&\!\!^\circ\!&\!\!^\circ\!&\!\!^\circ\!&\!\!^\circ\!&\!\!^\circ\!&\!\!^\circ\!&\!\!^\circ\!&\!\!^\circ\!&\!\!^\circ\!&\!\!^\circ\!&\!\!^\circ\!&\!\!^\circ\!&\!\!^\bullet\!&\!\!^\circ\!&\!\!^\circ\!&\!\!^\circ\!&\!\!^\circ\!&\!\!^\bullet\!&\!\!^\bullet\!&\!\!^\circ\!\vspace*{-1.5mm}\\
\!\!\!&\!\!\!&\!\!\!&\!\!&\!\!\!&\!\!\!&\!\!\!&\!\!\!&\!\!\!&\!\!\!&\!\!\!&\!\!\!&\!\!\!&\!\!\!&\!\!\!&\!\!^\circ\!&\!\!\!&\!\!^\circ\!&\!\!^\circ\!&\!\!\!&\!\!^\circ\!&\!\!\!&\!\!^\circ\!&\!\!^\circ\!&\!\!\!&\!\!^\circ\!&\!\!^\circ\!&\!\!^\circ\!&\!\!\!&\!\!^\circ\!&\!\!\!&\!\!^\circ\!&\!\!^\circ\!&\!\!\!&\!\!^\circ\!&\!\!^\bullet\!&\!\!^\circ\!&\!\!\!&\!\!^\circ\!&\!\!^\bullet\!&\!\!^\bullet\!&\!\!^\circ\!\vspace*{-1.5mm}\\
\!\!\!&\!\!\!&\!\!\!&\!\!\!&\!\!\!&\!\!\!&\!\!\!&\!\!\!&\!\!\!&\!\!\!&\!\!\!&\!\!\!&\!\!\!&\!\!\!&\!\!\!&\!\!\!&\!\!\!&\!\!\!&\!\!^\circ\!&\!\!\!&\!\!\!&\!\!\!&\!\!\!&\!\!^\circ\!&\!\!\!&\!\!\!&\!\!^\circ\!&\!\!^\circ\!&\!\!\!&\!\!\!&\!\!\!&\!\!\!&\!\!^\circ\!&\!\!\!&\!\!\!&\!\!^\circ\!&\!\!^\circ\!&\!\!\!&\!\!\!&\!\!^\circ\!&\!\!^\circ\!&\!\!^\bullet\!\vspace*{-1.5mm}\\
\!\!\!&\!\!\!&\!\!\!&\!\!\!&\!\!\!&\!\!\!&\!\!\!&\!\!\!&\!\!\!&\!\!\!&\!\!\!&\!\!\!&\!\!\!&\!\!\!&\!\!\!&\!\!\!&\!\!\!&\!\!\!&\!\!\!&\!\!\!&\!\!\!&\!\!\!&\!\!\!&\!\!\!&\!\!\!&\!\!\!&\!\!\!&\!\!^\circ\!&\!\!\!&\!\!\!&\!\!\!&\!\!\!&\!\!\!&\!\!\!&\!\!\!&\!\!\!&\!\!^\circ\!&\!\!\!&\!\!\!&\!\!\!&\!\!^\circ\!&\!\!^\bullet\!\vspace*{-1.5mm}\\
\!\!\!&\!\!\!&\!\!\!&\!\!\!&\!\!\!&\!\!\!&\!\!\!&\!\!\!&\!\!\!&\!\!\!&\!\!\!&\!\!\!&\!\!\!&\!\!\!&\!\!\!&\!\!\!&\!\!\!&\!\!\!&\!\!\!&\!\!\!&\!\!\!&\!\!\!&\!\!\!&\!\!\!&\!\!\!&\!\!\!&\!\!\!&\!\!\!&\!\!\!&\!\!\!&\!\!\!&\!\!\!&\!\!\!&\!\!\!&\!\!\!&\!\!\!&\!\!\!&\!\!\!&\!\!\!&\!\!\!&\!\!\!&\!\!^\circ\!\\\hline\hline

\!\!\!&\!\!\!&\!\!\!&\!\!\!&\!\!\!&\!\!\!&\!\!\!&\!\!\!&\!\!\!&\!\!\!&\!\!\!&\!\!\!&\!\!\!&\!\!\!&\!\!\!&\!\!\!&\!\!\!&\!\!\!&\!\!\!&\!\!\!&\!\!\!&\!\!\!&\!\!\!&\!\!\!&\!\!\!&\!\!\!&\!\!\!&\!\!\!&\!\!\rm{4}\!&\!\!\rm{2}\!&\!\!\rm{3}
\!&\!\!\rm{2}\!&\!\!\rm{2}\!&\!\!\rm{3}\!&\!\!\rm{2}\!&\!\!\rm{2}\!&\!\!\rm{2}\!&\!\!\rm{3}\!&\!\!\rm{2}\!&\!\!\rm{2}\!&\!\!\rm{2}\!&\!\!\rm{2}\!\\\hline

\!\!\!&\!\!\!\!\!\!&\!\!\!\!\!\!&\!\!\!&\!\!\!&\!\!\!&\!\!\!&\!\!\!&\!\!\!&\!\!\!&\!\!\!&\!\!\!&\!\!\!&\!\!\!&\!\!\!&\!\!\!&\!\!\!&\!\!\!&\!\!\!&\!\!\!&\!\!\!&\!\!\!&\!\!\!&\!\!\!&\!\!\!&\!\!\!&\!\!\!&\!\!\!&\!\!^\circ\!&\!\!^\circ\!&\!\!^\circ\!&\!\!^\circ\!&\!\!^\circ\!&\!\!^\circ\!&\!\!^\circ\!&\!\!^\circ\!&\!\!^\circ\!&\!\!^\bullet\!&\!\!^\bullet\!&\!\!^\bullet\!&\!\!^\bullet\!&\!\!^\circ\!\vspace*{-1.5mm}\\
\!\!\!&\!\!\!&\!\!\!&\!\!&\!\!\!&\!\!\!&\!\!\!&\!\!\!&\!\!\!&\!\!\!&\!\!\!&\!\!\!&\!\!\!&\!\!\!&\!\!\!&\!\!\!&\!\!\!&\!\!\!&\!\!\!&\!\!\!&\!\!\!&\!\!\!&\!\!\!&\!\!\!&\!\!\!&\!\!\!&\!\!\!&\!\!\!&\!\!^\circ\!&\!\!^\circ\!&\!\!^\circ\!&\!\!^\circ\!&\!\!^\circ\!&\!\!^\circ\!&\!\!^\circ\!&\!\!^\circ\!&\!\!^\circ\!&\!\!^\circ\!&\!\!^\bullet\!&\!\!^\bullet\!&\!\!^\bullet\!&\!\!^\circ\!\vspace*{-1.5mm}\\
\!\!\!&\!\!\!&\!\!\!&\!\!&\!\!\!&\!\!\!&\!\!\!&\!\!\!&\!\!\!&\!\!\!&\!\!\!&\!\!\!&\!\!\!&\!\!\!&\!\!\!&\!\!\!&\!\!\!&\!\!\!&\!\!\!&\!\!\!&\!\!\!&\!\!\!&\!\!\!&\!\!\!&\!\!\!&\!\!\!&\!\!\!&\!\!\!&\!\!\!&\!\!^\circ\!&\!\!\!&\!^\circ\!\!&\!^\circ\!\!&\!\!\!&\!\!^\circ\!&\!\!^\circ\!&\!^\circ\!\!&\!\!\!&\!\!^\circ\!&\!\!^\circ\!&\!\!^\circ\!&\!\!^\bullet\!\vspace*{-1.5mm}\\
\!\!\!&\!\!\!&\!\!\!&\!\!\!&\!\!\!&\!\!\!&\!\!\!&\!\!\!&\!\!\!&\!\!\!&\!\!\!&\!\!\!&\!\!\!&\!\!\!&\!\!\!&\!\!\!&\!\!\!&\!\!\!&\!\!\!&\!\!\!&\!\!\!&\!\!\!&\!\!\!&\!\!\!&\!\!\!&\!\!\!&\!\!\!&\!\!\!&\!\!\!&\!\!\!&\!\!\!&\!\!\!&\!\!^\circ\!&\!\!\!&\!\!\!&\!\!^\circ\!&\!\!^\circ\!&\!\!\!&\!\!\!&\!\!^\circ\!&\!\!^\circ\!&\!\!^\bullet\!\vspace*{-1.5mm}\\
\!\!\!&\!\!\!&\!\!\!&\!\!\!&\!\!\!&\!\!\!&\!\!\!&\!\!\!&\!\!\!&\!\!\!&\!\!\!&\!\!\!&\!\!\!&\!\!\!&\!\!\!&\!\!\!&\!\!\!&\!\!\!&\!\!\!&\!\!\!&\!\!\!&\!\!\!&\!\!\!&\!\!\!&\!\!\!&\!\!\!&\!\!\!&\!\!\!&\!\!\!&\!\!\!&\!\!\!&\!\!\!&\!\!\!&\!\!\!&\!\!\!&\!\!\!&\!\!^\circ\!&\!\!\!&\!\!\!&\!\!\!&\!\!^\circ\!&\!\!^\bullet\!\vspace*{-1.5mm}\\
\!\!\!&\!\!\!&\!\!\!&\!\!\!&\!\!\!&\!\!\!&\!\!\!&\!\!\!&\!\!\!&\!\!\!&\!\!\!&\!\!\!&\!\!\!&\!\!\!&\!\!\!&\!\!\!&\!\!\!&\!\!\!&\!\!\!&\!\!\!&\!\!\!&\!\!\!&\!\!\!&\!\!\!&\!\!\!&\!\!\!&\!\!\!&\!\!\!&\!\!\!&\!\!\!&\!\!\!&\!\!\!&\!\!\!&\!\!\!&\!\!\!&\!\!\!&\!\!\!&\!\!\!&\!\!\!&\!\!\!&\!\!\!&\!\!^\circ\!\\\hline\hline
\end{array}$$
\caption{Symbolic continuation of Table~\ref{of}}
\label{off}
\end{table}

\begin{theorem}\label{thm1}
If either $b(n)=10\cdots 0$ or $b(n)$ is the endvertex of a thread, then $h(n)=0\in{\mathcal S}$.
\end{theorem}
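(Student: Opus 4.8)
The plan is to translate $h(n)=0$ into a statement about the anchored nest $0F(n)$ by means of Observation~\ref{obs}, and then to read that statement off a single explicit castling step in each of the two cases. Recall that $h(n)$ is obtained from the halfway‑distance floor $\sigma_{\gamma(n)}(n)$ between the two occurrences of the integer $\gamma(n)$ in $0F(n)$ by keeping it unchanged exactly when the pair $k_1k_2$ lies in the block $Y$ of the castling decomposition $F(\rho(n))=W|X|Y|Z$, $F(n)=W|Y|X|Z$ (Observation~\ref{obs}); and that $\sigma_{\gamma(n)}(n)=0$ precisely when those two occurrences are adjacent, i.e.\ when the arch of $g_{f(n)}$ labeled $\gamma(n)$ is a peak. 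So in each case it suffices to describe the relevant castling step explicitly enough to verify simultaneously that $k_1k_2\subseteq Y$ and that the two copies of $\gamma(n)$ become adjacent in $0F(n)$.

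For $b(n)=1\,0^{L-1}$ I would argue directly. Here $\gamma(n)=L$ and $\rho(n)=0$ (Section~\ref{section5}), so the castling step producing $F(n)$ acts on the root nest $F(0)=11$ blown to the length of $F(n)$, namely on $q^{L}(F(0))=1\,2\cdots L\,(L{+}1)\,(L{+}1)\,L\cdots 2\,1$ (Lemma~\ref{z}, Definition~\ref{wesay}). Since $|W|=L-1$ and $|Z|=L$, the middle block is the three entries $L\,(L{+}1)\,(L{+}1)$, so $x=L$, $X$ is the single entry $L$, and $Y=(L{+}1)(L{+}1)=k_1k_2$. The castling $X|Y\to Y|X$ produces
\[F(n)=1\,2\cdots(L{-}1)\;(L{+}1)(L{+}1)\;L\,L\;(L{-}1)\cdots 2\,1,\]
in which the two copies of $\gamma(n)=L$ are adjacent and $k_1k_2=Y$; hence $\sigma_L(n)=0$ and $h(n)=0$ by Observation~\ref{obs}.

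For $b(n)=a_L\cdots a_2a_1$ an endvertex of a thread $P^i$ I would first record that $\gamma(n)=1$, that the last entry is maximal, $a_1=a_2+1$, that $\rho(n)=n-1$, and that $b(\rho(n))=a_L\cdots a_2a_2$ again lies on $P^i$. In the $\gamma=1$ castling step one has $W=\epsilon$, $Z$ equal to the final entry $1_2$ of $F(\rho(n))$, and $F(n)=Y|X|1_2$; I would prove that $X$ ends in the entry $1_1$, so that $1_1$ and $1_2$ are adjacent in $F(n)$, i.e.\ $\sigma_1(n)=0$, while $k_1k_2\subseteq Y$. The tool is the thread structure: by Theorem~\ref{id} only $\sigma_1$ changes along $P^i$, so the floors $\sigma_j$, $j\ge 2$, are constant along $P^i$ while incrementing $a_1$ from $0$ to $a_2+1$ successively flattens the rightmost layer‑$0$ arch of $f(m)$; since $a_1$ equals $r-1$, where $r$ is the number of prime (layer‑$0$) components of $f(m)$ (an identity carried by the correspondence of \cite{D1,D2}), the endvertex is exactly the vertex of $P^i$ at which $r$ is maximal, and there the last prime component of $f(n)$ can only be the single peak $01$. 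That gives $\sigma_1(n)=0$, and in this step the label‑$1$ arch is untouched by blowing, so $k_1k_2\subseteq Y$ and $h(n)=0$ by Observation~\ref{obs}.

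The main obstacle is the second case, specifically the claim that the $\gamma=1$ split of $F(\rho(n))$ puts the single entry $1_1$ at the right end of $X$. Equivalently, one must show that the first occurrence of the value $x+1=r+1$ (where $x=F_1(\rho(n))=r$) immediately follows $1_1$ in $F(\rho(n))$, i.e.\ that the rightmost layer‑$0$ arch of $F(\rho(n))$ has the rightmost layer‑$1$ arch as its first sub‑arch. Proving this cleanly — rather than by the kind of one‑step computation that settles the first case — seems to require an induction along $P^i$ (equivalently, up the parent chain), using the structure forced by $b(\rho(n))=a_L\cdots a_2a_2$ together with the identity $a_1=r-1$; this, and the companion check that the pair $k_1k_2$ really stays out of $X$ in the sense of Observation~\ref{obs}, are the steps I expect to require the most care.
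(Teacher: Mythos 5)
Your first case is correct and is essentially the paper's argument made explicit: the single castling step applied to $q^{L}(F(0))=12\cdots L(L{+}1)(L{+}1)L\cdots 21$ with $|W|=L-1$ and $|Z|=L$ forces $X=L$ and $Y=(L{+}1)(L{+}1)$, so the two copies of $\gamma(n)=L$ become adjacent in $F(n)$ and $h(n)=\sigma_{L}(n)=0$. The paper's own proof of Theorem~\ref{thm1} is a two-sentence assertion (the first case ``insures the presence of all feasible substrings $j_1j_2$'', the second ``insures at least the presence of the substring $1_11_2$''), and for the second case you have correctly identified the same reduction: $h(n)=0$ follows once $1_1$ and $1_2$ are adjacent in $F(n)$. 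Two problems remain, however.

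First, the auxiliary claim ``$k_1k_2\subseteq Y$'' that you impose in the second case is false. Take $n=9$, $b(9)=112$, the endvertex of the thread $\{b(7),b(8),b(9)\}$: the castling decomposition of $F(8)=24421331$ is $X=24421$, $Y=33$, $Z=1$, so $k_1k_2=4_14_2$ sits inside $X$, yet $h(9)=0$ (Table~\ref{dia23}). This shows that Observation~\ref{obs} cannot be read literally as ``$k_1k_2\in X\Rightarrow h(n)<0$''; what actually governs the $k$-dependence of $\sigma_{\gamma(n)}(n)$ is whether the blowing site $k_1k_2$ lies \emph{between} $\gamma(n)_1$ and $\gamma(n)_2$ in $F(n)$. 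Fortunately this makes your extra condition unnecessary: once $\gamma(n)_1$ and $\gamma(n)_2$ are adjacent, nothing --- in particular not $k_1k_2$ --- separates them, so $\sigma_{\gamma(n)}$ is $k$-independent and $h(n)=\sigma_{\gamma(n)}(n)=0$ automatically. Second, the fact you actually need --- that at the endvertex of a thread the block $X$ of the $\gamma=1$ castling step ends in $1_1$, equivalently that the last prime component of $f(n)$ is the peak $01$ --- is exactly the crux, and you leave it unproven, deferring it to ``an induction along $P^i$'' that you do not carry out. Since the paper itself only asserts this fact, your proposal identifies the right statement but does not close the one step that a complete proof of the second case requires.
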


\begin{proof} The first case in the hypothesis insures the presence of all feasible substrings $j_1j_2$ in $F(n)$. The second case insures at least the presence of the substring $1_11_2$ in $F(n)$.
\end{proof}

\begin{theorem}\label{thm2}
Let $n\in\mathbb{N}$. Then, $\exists$ $r>n$ such that $b(r)=1|b(n)$ and
\begin{enumerate}
\item if $h(n)\in\Phi_1$, then $h(r)\in\Phi_1$ and $h(r)=k-h(n)$;
\item if $h(n)\notin\Phi_1$, then $h(r)\notin\Phi_1$ and $h(r)=h(n)$.
\end{enumerate}
\end{theorem}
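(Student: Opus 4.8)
The plan is to prove the statement in three steps: produce $r$, propagate a relation along the tree $T$ via the clone update, and finish with a half‑distance computation that yields the dichotomy.

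\emph{Step 1: existence of $r$.} Since $b(n)$ is a TRGS (or the one‑entry string $0$), its leftmost entry equals $1$ whenever it has more than one entry, so prefixing a further $1$ yields a string whose leftmost entry is again $1$ and in which the freshly created leftmost pair still satisfies the defining inequality of a TRGS, while no reducible pattern is introduced; hence $1\,|\,b(n)$ is again a TRGS, say $b(r)$. Because the order on $\mathcal S$ (see Definition~\ref{prec}) places every string before any strictly longer one and $1\,|\,b(n)$ is strictly longer than $b(n)$, we get $r>n$. We also record $\gamma(r)=\gamma(n)$ when $b(n)\ne 0$ (prefixing leaves the trailing run of zeros untouched), the degenerate case $b(n)=0$, $b(r)=10$ being checked by hand.

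\emph{Step 2: commutation with $\rho$ and induction along $T$.} The structural point is that prefixing $1$ commutes with the parent function away from the roots $b(\mathcal C_k)$: if $b(n)$ is not the least TRGS of its length, then decrementing the $\gamma(r)$‑th entry of $b(r)$ returns $1\,|\,c(n)$, so $b(\rho(r))=1\,|\,b(\rho(n))$; if $b(n)=10\cdots 0$ one has instead $\rho(r)=\mathcal C_{\lambda(n)}$, and this, together with Theorem~\ref{thm1}, serves as the base of the induction. Assuming the statement for $\rho(n)<n$, transport it to $n$ by Theorem~\ref{id}: $\sigma(n)$ and $\sigma(r)$ each differ from their parents' clones in a single coordinate only — the $\gamma(n)$‑th, respectively $\gamma(r)=\gamma(n)$‑th — which is raised by one, the other coordinates being untouched (Observation~\ref{obs}). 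Thus everything reduces to controlling that one coordinate and, by Observation~\ref{obs}, whether the maximal pair $k_1k_2$ (with $k$ the relevant semilength) lies in the moved block $X$ or in the block $Y$ of the castling decomposition $W\,|\,X\,|\,Y\,|\,Z$ at level $r$.

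\emph{Step 3: the dichotomy, and the main obstacle.} By Observation~\ref{obs}, $h(\cdot)$ is negative exactly when $k_1k_2$ sits in the moved block $X$ (with $\sigma_{\gamma}=k+h$) and nonnegative otherwise (with $\sigma_{\gamma}=h$). Prefixing $1$ blows $F(n)$ once more, raising the semilength to $k=\lambda(r)$, in such a way that the two occurrences of the value $\gamma(r)=\gamma(n)$ in $F(r)$ enclose the same inner data as in $F(n)$, augmented by the new pair $k_1k_2$ and the surrounding $1$'s. Computing $\sigma_{\gamma(r)}(r)=\lfloor d(\gamma(r)_1,\gamma(r)_2)/2\rfloor$ and comparing with $\sigma_{\gamma(n)}(n)$ leaves exactly two outcomes: if $k_1k_2$ falls in $X$ at level $r$ — which, unwinding the definitions of Section~\ref{2f} and of the family $\{\Phi_j\}$, is equivalent to $h(n)\in\Phi_1$ — the span of $\gamma(r)$ becomes the complement, within the $k$ available layers, of the span of $\gamma(n)$, giving $h(r)=k-h(n)$, and the sign change places $h(r)$ in $\Phi_1$; if $k_1k_2$ falls in $Y$, that span is unchanged, so $h(r)=h(n)\notin\Phi_1$. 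The crux — and the step I expect to be the real work — is precisely this bookkeeping: verifying that prefixing $1$ to the TRGS corresponds to the asserted ``enclose and blow once'' operation on the Dyck nest, and that the resulting placement of $k_1k_2$ in $X$ versus $Y$ is governed exactly by the membership $h(n)\in\Phi_1$. This means chasing the castling recursion of Section~\ref{sec4} through the commutation of Step 2 while tracking the lengths of $W,X,Y,Z$ and the parity floor in the definition of $\sigma$; the induction of Step 2 together with Theorems~\ref{thm1} and~\ref{id} keeps this a bounded per‑case check rather than an open‑ended computation.
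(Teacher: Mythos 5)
Your overall strategy --- reduce the dichotomy to the position of the pair $k_1k_2$ relative to the two occurrences of the updated index, using Observation~\ref{obs} and Theorem~\ref{id} to localize the change to the single coordinate $\sigma_{\gamma(n)}$ --- is the same mechanism the paper invokes, and your Steps 1 and 2 (existence of the index $r$ of $1|b(n)$, commutation of prefixing with $\rho$ away from the roots) are sound scaffolding that the paper omits. However, Step 3 rests on two claims that are false as stated. First, prefixing $1$ to the TRGS does \emph{not} blow the Dyck nest: $F(1)=2211$ has $q(F(1))=233211$ (Lemma~\ref{z}), whereas $b(3)=1|b(1)=11$ has $F(3)=221331$. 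The passage from $F(n)$ to $F(r)$ is mediated by the castling recursion, not by the operation $q$, so the picture of ``the same inner data augmented by the new pair $k_1k_2$'' between $\gamma(r)_1$ and $\gamma(r)_2$ is not available as a starting point; indeed it contradicts your own subsequent case ``if $k_1k_2$ falls in $Y$, that span is unchanged.''

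Second, and more importantly, your dichotomy criterion is the \emph{absolute} condition ``$k_1k_2$ lies in $X$ at level $r$,'' while the criterion the paper's proof states is the \emph{relative} one: whether $k_1k_2$ changes from one side of $1_1$ to the opposite side in passing from $F(n)$ to $F(r)$. These differ. By Observation~\ref{obs}, ``$k_1k_2\in X$ at level $r$'' amounts to $h(r)<0$; now take $n=11$, $b(11)=121$, $h(11)=-2$ and $r=25$, $b(25)=1121=1|b(11)$, $h(25)=-2$ (Table~\ref{dia23}). Here $k_1k_2\in X$ at both levels, so your criterion would place this case in item 1 and predict $h(25)=k-h(11)$, whereas the data (and item 2) give $h(25)=h(11)=-2$; consequently the claimed equivalence ``$k_1k_2\in X$ at level $r$ iff $h(n)\in\Phi_1$'' cannot hold. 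Finally, you candidly flag that the central verification --- that the change of side is governed by membership in $\Phi_1$ and produces exactly $h(r)=k-h(n)$ --- is left as ``the real work''; that is precisely the content of the theorem, so even after repairing the two points above the proposal does not close the argument (though, in fairness, the paper's own one-sentence proof asserts the relative criterion without carrying out that bookkeeping either).
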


\begin{proof}
Item 1 occurs exactly when the substring $k_1k_2$ in $F(n)$ changes position from one side of $1_1$ to the opposite side in $F(r)$, in the procedure of Section~\ref{sec4} starting at $b(\rho(n))$ and $b(\rho(r))$ and ending at $b(n)$ and $b(r)$, respectively. Otherwise, item 2 happens.
\end{proof}

\begin{table}[htp]
$$\begin{array}{|l|}\hline
^{\xi_2^2=2_1|1_1|1_2;}
_{\xi_2^3=2_2|1_1|1_2|1_3;}\\
^{\xi_2^4=2_3|1_1|1_2|1_3|1_4;}
_{\xi_2^5=2_4|1_1|1_2|1_3|1_4|1_5;}\\
^{\cdots;}
_{\xi_3^3=3_1|1_1|\xi_2^2|\xi_2^3=3_1|1_1|2_11_11_2|2_21_11_21_3;}\\
^{\xi_3^4=3_2|1_1|\xi_2^2|\xi_2^3|\xi_2^4=3_2|1_1|2_11_11_2|2_21_11_21_3|2_31_11_21_31_4;}
_{\xi_3^5=3_3|1_1|\xi_2^2|\xi_2^3|\xi_2^4|\xi_2^5=3_2|1_1|2_11_11_2|2_21_11_21_3|2_31_11_21_31_4|2_41_11_21_31_41_5;}\\
^{\cdots;}
_{\xi_4^4=4_1|1_1|\xi_2^2|\xi_3^3|\xi_3^4=4_1|1_1|2_11_11_2|3_11_12_11_11_22_21_11_21_3|3_21_12_11_11_2|2_21_11_21_3|2_31_11_21_31_4;}\\
^{\xi_4^5=4_2|1_1|\xi_2^2|\xi_3^3|\xi_3^4|\xi_3^5;}
_{\cdots}\\
^{\xi_5^5=5_1|1_1|\xi_2^2|\xi_3^3|\xi_4^4|\xi_4^5;}
_{\xi_5^6=5_2|1_1|\xi_2^2|\xi_3^3|\xi_4^4|\xi_4^5|\xi_4^6;}\\
^{\cdots}
_{\xi_a^{a+b}=a_{a+b}|1_1|\xi_2^2|\cdots|\xi_{a-1}^{a-1}|\xi_{a-1}^a|\cdots|\xi_{a-1}^{a+b}, \; \forall 0<a\in\mathbb{N}, \forall 0<b\in\mathbb{N}.}\\\hline
\end{array}$$
\caption{Introduction of strings $\xi_\gamma^b$, for all pairs $(\gamma,b)\in\mathbb{N}^2$ such that $1<\gamma\le b$.}
\label{tab1}
\end{table}

\begin{example} In the upper-left box of Table~\ref{of}, the values $h(\cdot)$ for the five initial threads $P^i$ ($i=0,1,2,3,4$) are disposed in order to start illustrating Theorems~\ref{thm1}-\ref{thm2}.
Each such thread is shown with a singly underlined heading containing the value $\gamma(\cdot)>1$ of its initial vertex, and in the entries below the heading, 
the subsequent values $h(\cdot)$ for the vertices of the thread. In this upper-left box, the braid $Q^0=P^0\cup P^1$ is over the braid $Q^1\setminus P^4=P^2\cup P^3$. The final column for $P^4$ has its values $h(\cdot)$ also determined by the Theorems~\ref{thm1} and~\ref{thm2}.

In the upper-right box of Table~\ref{of}, similar changes are observed with $Q^0\cup Q^1$ on top of $Q^2\cup Q^3$, and to the right of them, the columns of $Q^4$
for threads $P^{10},P^{11},P^{12},P^{13}$ have their values $h(\cdot)$ determined.

The lower box of Table~\ref{of} shows an upper level composed by threads $P^0,\ldots,P^{13}$, a middle level composed by threads $P^{14},\ldots,P^{25}$ and a  lower level composed by threads $P^{26},\ldots,P^{41}$, disposed vertically as to facilitate verifying our assertions.

Similarly with Table~\ref{off}, where the threads $P^0,\ldots,P^{126}$ are disposed in four levels with their underlined headings $\gamma(\cdot)$ as above and the vertically disposed values of $h(\cdot)$ replaced  by rings ``$\circ$" if $h(\cdot)\notin\Phi_1$ and bullets ``$\bullet$'' if $h(\cdot)\in\Phi_1$. The final five threads here are shown in the box on the right hand side of Table~\ref{quilombo} with the data disposed as in Table~\ref{of}.
\end{example}

\begin{theorem}\label{thm3}
Let $1<j\le k\in\mathbb{N}$ and let $n\in\mathbb{N}$ be such that $b(n)=1\cdots(j-1)(j-1)a_{k-j-1}\cdots a_1$. Then, $\exists r>n$ such that
$b(r)=1\cdots(j-1)ja_{k-j-1}\cdots a_1$ is a vertex of $T$ and
\begin{enumerate}
\item if $h(n)\in\Phi_j$, then $h(r)\in\Phi_j$ and $h(r)=k-h(n)$;
\item if $h(n)\notin\Phi_j$, then $h(r)\notin\Phi_j$ and $h(r)=h(n)$.
\end{enumerate}
\end{theorem}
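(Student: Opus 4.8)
The plan is to prove Theorem~\ref{thm3} by generalizing the argument already sketched for Theorem~\ref{thm2}, which is precisely the case $j=2$ of the present statement (where the suffix $(j-1)(j-1)=11$ is just the entry $a_1=1$, and $b(r)=1\cdots 12$ is obtained by the same castling step). First I would establish the existence of $r$: since $b(n)=1\cdots(j-1)(j-1)a_{k-j-1}\cdots a_1$ is a TRGS with $a_{k-j}=a_{k-j+1}-1$ (the entry just left of the specified $(j-1)$ at position $k-j$ equals $j-1$ while the entry to its left is $j-1$ as well, so the restricted-growth inequality $a_{m+1}\ge a_m+1$ can still be relaxed there), the string obtained by raising that entry from $j-1$ to $j$ is again a legal TRGS, hence a vertex $b(r)$ of $T$ with $r>n$ (lexicographic order increases). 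One must check that this raised entry is exactly at the position $\gamma(r)$, so that $(c(r),b(r))\in E(T)$ with $b(n)$ and $b(r)$ sharing the same parent-chain prefix up to the castling step; this is where Definition~\ref{treee} and the structure of $\Gamma$ in~(\ref{(2)}) enter.

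Next I would track what castling does to the Dyck nests $F(n)$ and $F(r)$. By the procedure of Section~\ref{sec4}, both $F(n)$ and $F(r)$ arise from a common ancestor nest by transposing an inner block $X|Y\to Y|X$, and the difference between the two is localized: raising the position-$(k-j)$ entry by one unit moves one extra "layer boundary" so that the split point between $X$ and $Y$ shifts by exactly the block governed by the value $j-1$. Concretely, in $F(n)$ the substring $(j-1)_1\cdots(j-1)_2$ sits on one side of the eventual $X|Y$ cut and in $F(r)$ it sits on the other; everything outside the affected window is unchanged. The key quantity is the halfway-distance floor $\sigma_{\gamma}(n)=\lfloor d(\gamma_1,\gamma_2)/2\rfloor$ where $\gamma=\gamma(n)$; by Theorem~\ref{id} and Observation~\ref{obs}, whether $h(n)$ equals $\sigma_\gamma(n)$ (the "$Y$" case, $h(n)\ge 0$) or equals $\sigma_\gamma(n)-k$ (the "$X$" case, $h(n)<0$) is dictated by whether the substring $k_1k_2$ lies inside $X$ or inside $Y$. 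So the dichotomy in the theorem is exactly the dichotomy of Observation~\ref{obs} relativized to the level-$j$ substructure, and the membership test $h(n)\in\Phi_j$ is precisely the combinatorial encoding (via the defining properties of $\{\Phi_j\}$ and the braid structure of Section~\ref{2f}) of the statement "$k_1k_2$ lies on the side of the level-$(j-1)$ block that gets carried across the cut."

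From there the two cases are short. In case~1, $k_1k_2$ starts on the side of $(j-1)_1$ in $F(n)$ that the castling swaps, so after the $X\leftrightarrow Y$ transposition its position relative to the level-$j$ window is reflected; the halfway-distance floor of $\gamma(r)$ in $F(r)$ is therefore $k$ minus that of $\gamma(n)$ in $F(n)$, giving $h(r)=k-h(n)$, and the reflection also flips which defining clause of $\Phi_j$ applies, so $h(r)\in\Phi_j$. In case~2, $k_1k_2$ is on the side that is \emph{not} carried across, the distance $d(\gamma_1,\gamma_2)$ is unchanged by the transposition, and so $h(r)=h(n)$ and $h(r)\notin\Phi_j$. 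The main obstacle I anticipate is the bookkeeping in the existence step and in pinning down that the "affected window" behaves for general $j$ exactly as it does for $j=2$ in Theorem~\ref{thm2}: one has to verify that the recursion in the definition of $\Phi_j$ (items 1--4 preceding the theorem) correctly propagates the $X$/$Y$ side-information along the braids $Q^\ell$, and that raising the entry at position $k-j$ never interferes with entries further left in a way that would change $\gamma(r)$ or move $k_1k_2$ outside the window. I would handle this by an induction on $j$ with Theorem~\ref{thm2} as the base case, feeding the inductive hypothesis through the nested structure of the strings $\xi_\gamma^b$ in Table~\ref{tab1}, which record precisely how level-$j$ blocks embed inside level-$(j+1)$ blocks.
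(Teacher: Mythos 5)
Your proposal takes essentially the same route as the paper: the paper's entire proof of Theorem~\ref{thm3} is the single sentence that the argument for Theorem~\ref{thm2} applies again, and that argument is exactly the dichotomy you identify, namely whether castling carries the substring $k_1k_2$ across to the other side of the relevant first-appearance marker (giving $h(r)=k-h(n)$ via Observation~\ref{obs}) or leaves it in place (giving $h(r)=h(n)$). Your write-up is in fact considerably more detailed than the paper's, supplying the existence of $r$ and the induction on $j$ that the paper leaves implicit; the only blemish is the stray claim ``$a_{k-j}=a_{k-j+1}-1$'' where you mean the two entries are equal (both $j-1$), which does not affect the argument.
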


\begin{proof} The argument in the proof of Theorem~\ref{thm2} applies here, too.\end{proof}

\begin{corollary}\label{cor1} Let $n\in\mathbb{N}$ be such that $b(n)=11a_{k-3}\cdots a_1$. Then, $\exists m>n$ such that $b(m)=12a_{k-3}\cdots a_1$ and
\begin{enumerate}
\item if $h(n)\in\Phi_2$, then $h(m)\in\Phi_2$ and $h(m)=k-h(n)$;
\item if $h(n)\notin\Phi_2$, then $h(m)\notin\Phi_2$ and $h(m)=h(n)$.
\end{enumerate}
\end{corollary}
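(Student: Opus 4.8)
The plan is to derive Corollary~\ref{cor1} as the instance $j=2$ of Theorem~\ref{thm3}, so that essentially no new computation is needed; what must be done is a careful match-up of notation, after which the statement is immediate.

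First I would specialize the prefix shape $b(n)=1\cdots(j-1)(j-1)a_{k-j-1}\cdots a_1$ of Theorem~\ref{thm3} to $j=2$. The increasing run ``$1\cdots(j-1)$'' collapses to the single entry $1$, it is followed by one further copy of $j-1=1$, and the tail ``$a_{k-j-1}\cdots a_1$'' becomes ``$a_{k-3}\cdots a_1$''; hence a TRGS of that shape is exactly $b(n)=11a_{k-3}\cdots a_1$, the hypothesis of the corollary, and the total length $(j-1)+1+(k-j-1)=k-1$ checks out. In the same way the target shape $b(r)=1\cdots(j-1)j\,a_{k-j-1}\cdots a_1$ at $j=2$ is $b(r)=12a_{k-3}\cdots a_1$. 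The side condition $1<j\le k$ reads $1<2\le k$, which holds because $b(n)$ already carries the two-entry prefix $11$, so its length is at least $2$ and $k\ge 3$; in particular $b(n)$ is a genuine TRGS of the first kind in the TRGS definition (the exceptional case $k=2$, $a_1\in\{0,1\}$, is irrelevant here). Finally, the subsequence $\Phi_2$ of the corollary is precisely the member $\Phi_j$ of the family $\{\Phi_j\mid j>0\}$ with $j=2$, so the dichotomy ``$h(n)\in\Phi_2$ versus $h(n)\notin\Phi_2$'' is verbatim the dichotomy of Theorem~\ref{thm3} at $j=2$.

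With these identifications, Theorem~\ref{thm3} supplies an index $r>n$ with $b(r)=12a_{k-3}\cdots a_1$ a vertex of $T$; putting $m:=r$ gives the existence claim, and the two items of the corollary ($h(m)=k-h(n)$ with $h(m)\in\Phi_2$ in the first case, $h(m)=h(n)$ with $h(m)\notin\Phi_2$ in the second) are the two items of Theorem~\ref{thm3} transcribed. I do not expect a real obstacle; the only delicate point is the index bookkeeping in the prefix notation --- confirming that ``$1\cdots(j-1)$'' degenerates to the single symbol $1$ (and not to the empty string or to something longer) at $j=2$, and that the $a$-block subscript shifts coherently from $k-j-1$ to $k-3$. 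Should one prefer a self-contained proof, one can instead rerun the argument of Theorem~\ref{thm2}--Theorem~\ref{thm3} with $j=2$: tracking, through the castling steps from $b(\rho(n))$ to $b(n)$ and from $b(\rho(m))$ to $b(m)$, whether the substring $k_1k_2$ of $F(n)$ ends up on the same side of $2_1$ as it does in $F(m)$; the two cases of the corollary correspond exactly to whether that side is preserved, with the value of $h$ then read off via Observation~\ref{obs}.
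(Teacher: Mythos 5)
Your proposal is correct and matches the paper's intent exactly: the paper states Corollary~\ref{cor1} without a separate proof, treating it as the immediate specialization of Theorem~\ref{thm3} to $j=2$, which is precisely the identification you carry out (with the prefix $1\cdots(j-1)$ degenerating to the single entry $1$, the tail subscript becoming $k-3$, and $m:=r$). Your notational bookkeeping and the optional fallback to rerunning the Theorem~\ref{thm2} argument are both consistent with the paper.
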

\begin{example}
Applying Corollary~\ref{cor1} to $n=3,7,8,9$, so that $b(n)=11,110,111,112$, and $h(n)=-2,-3,1,0\in\Phi_2$ yields $r=4,10,11,12$, with $b(r)=12,120,121,122$ and  $h(r)=0,0,-2,-3$.
\end{example}

\begin{corollary}\label{cor2}
Let $n\in\mathbb{N}$ be such that $b(n)=122a_{k-4}\cdots a_1$. Then, $\exists r>n$ such that $b(r)=123a_{k-4}\cdots a_1$ and
\begin{enumerate}
\item if $h(n)\in\Phi_3$, then $h(r)\in\Phi_3$ and $h(r)=k-h(n)$;
\item if $h(n)\notin\Phi_3$, then $h(r)\notin\Phi_3$ and $h(r)=h(n)$.
\end{enumerate}
\end{corollary}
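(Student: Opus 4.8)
~\textbf{Plan of proof.} The statement for Corollary~\ref{cor2} is the instance $j=3$ of Theorem~\ref{thm3}: the hypothesis $b(n)=122\,a_{k-4}\cdots a_1$ is exactly ``$b(n)=1\cdots(j-1)(j-1)a_{k-j-1}\cdots a_1$'' with $j=3$, since $1\cdots(j-1)(j-1)=1\,2\,2$, and the asserted $b(r)=123\,a_{k-4}\cdots a_1$ is ``$b(r)=1\cdots(j-1)j\,a_{k-j-1}\cdots a_1$''. Likewise $\Phi_3$ here is the subsequence $\Phi_j$ with $j=3$. So the plan is simply to invoke Theorem~\ref{thm3} with $j=3$, after checking that the hypothesis $1<j\le k$ is harmless: we may assume $k\ge 3$, for otherwise no TRGS of the form $122\,a_{k-4}\cdots a_1$ exists (a TRGS $a_{k-1}\cdots a_1$ with prefix $122$ forces $k-1\ge 3$).

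First I would observe that the passage from $b(n)=\cdots(j-1)(j-1)\cdots$ to $b(r)=\cdots(j-1)j\cdots$ is the castling move of Section~\ref{sec4} applied at position $\gamma(r)$, where the affected entry increases from $j-1$ to $j$; by Observation~\ref{obs} and Theorem~\ref{id} this changes exactly the $\gamma(r)$-th clone entry, and by the dichotomy in Observation~\ref{obs} the substring $k_1k_2$ either stays on the same side of the relevant marker or crosses it. Second, I would identify $\Phi_3$ as the set on which $k_1k_2$ lies ``outside'' the pair $j_1\cdots j_2$ in the sense tracked by Theorem~\ref{thm3}: membership $h(n)\in\Phi_3$ is precisely the condition under which the castling move swaps $k_1k_2$ across $1_1$ (equivalently, across the innermost nest boundary created by the suffix $22$), giving the reflected value $h(r)=k-h(n)$; otherwise $k_1k_2$ is unaffected and $h(r)=h(n)$. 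These are the two cases in the conclusion.

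Concretely, the argument is: apply Theorem~\ref{thm3} with $j=3$ to get $r>n$ with $b(r)=123\,a_{k-4}\cdots a_1\in V(T)$, together with the two implications. Item~1 of Theorem~\ref{thm3} gives ``$h(n)\in\Phi_3\Rightarrow h(r)\in\Phi_3$ and $h(r)=k-h(n)$'', which is item~1 of the corollary; item~2 gives ``$h(n)\notin\Phi_3\Rightarrow h(r)\notin\Phi_3$ and $h(r)=h(n)$'', which is item~2. The only thing to verify beyond citation is that the TRGS $b(r)=123\,a_{k-4}\cdots a_1$ is legitimately of the shape demanded in Theorem~\ref{thm3}, i.e. that appending the suffix $a_{k-4}\cdots a_1$ to the prefix $123$ still yields a valid TRGS under the growth condition $a_{j+1}\ge a_j+1$; but this is inherited from $b(n)=122\,a_{k-4}\cdots a_1$ being a TRGS, because replacing the third-from-left entry $2$ by $3$ only relaxes the constraint $a_{k-3}\ge a_{k-4}+1$ at the junction (we need $3\ge a_{k-4}+1$, which follows from $2\ge a_{k-4}+1$).

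\textbf{Anticipated obstacle.} Since this corollary is a direct specialization, there is no substantive obstacle; the one subtlety worth a sentence is making sure the indexing matches, namely that the prefix ``$1\cdots(j-1)(j-1)$'' in Theorem~\ref{thm3} with $j=3$ really is the literal string $122$ and not, say, $112$ — it is $122$ because the growth condition forces the entries of a TRGS to be $1,2,\ldots$ up to position $\gamma$, so ``$1\cdots(j-1)$'' is the increasing run $1,2,\ldots,j-1$ and the repeated last entry makes ``$1\cdots(j-1)(j-1)$'' $=1\,2\,2$ for $j=3$. After that bookkeeping, the corollary is immediate.
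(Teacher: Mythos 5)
Your proposal is correct and matches the paper's (implicit) argument: the corollary is exactly the $j=3$ instance of Theorem~\ref{thm3}, which the paper leaves as an unstated specialization. Your added bookkeeping—confirming that ``$1\cdots(j-1)(j-1)$'' is literally $122$ for $j=3$ and that replacing the entry $2$ by $3$ preserves the TRGS growth condition—is a harmless and reasonable elaboration of the same route.
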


\begin{example}
Applying Corollary~\ref{cor2} to $n=12,33,34,35,36$, so $b(n)=122,1220,1221,1222$, $1223$ and $h(\alpha_3)=-3,-4,2,1,0\in\Phi_3$ yields $r=13,37,38,39,40$ with $b(r)=123,1230,1231$, $1232,1233$ and $h(\alpha'_3)=0,0,-2,-3,-4$.
\end{example}

\section{Controlling odd and middle-levels graphs via T}\label{conv}

We introduce strings $\xi_\gamma^b$, for all pairs $(\gamma,b)\in\mathbb{N}^2$ with $1<\gamma\le b$. The entries of each $\xi_\gamma^b$ are integer pairs $(\alpha,\beta)$, denoted $\alpha_\beta$, starting with $\alpha_\beta=1_1$, initial case of the more general notation $1_\beta$, for $\beta\ge 1$. The strings $\xi_\gamma^b$  are determined following Table~\ref{tab1}. The components $\alpha$ in the entries $\alpha_\beta$ represent the indices $\gamma=\gamma(b(n))$ (see Section~\ref{s2&half}) in their order of appearance in ${\mathcal S}$, and $\beta$ is an indicator to distinguish different entries $\alpha_\beta$ with $\alpha$ locally constant.

Next, consider the infinite string $J$ of integer pairs $\alpha_\beta$ formed as the concatenation
\begin{align}\label{J(2)} J=\xi_1^1|\xi_2^2|\cdots|\xi_\gamma^\gamma|\cdots=*|1_1|\xi_2^2|\cdots|\xi_\gamma^\gamma|\cdots,\end{align}
with $\xi_1^1=*|1_1$ standing for the first two lines of Tables~\ref{dia23} and~\ref{signa}, where $*$, representing the root of $T$, stands for the first such line, and $\xi_1^1$ for the second line.

A {\it partition of a string} $A$ is a sequence of substrings $\sigma_1,\sigma_2,\ldots,\sigma_n$ whose concatenation $\sigma_1|\sigma_2|\cdots|\sigma_n$ is equal to $A$.

 We recur to {\it Catalan's reversed triangle} $\Delta'$, whose lines are obtained from Catalan's triangle $\Delta$ (see \cite{D2}) by reversing its lines, so that
 they may be written as in Table~\ref{tab4} that shows the first eight lines $\Delta'_k$ of $\Delta'$, for $k\in[0,7]$. 
Each $\xi_\gamma^b$ in the statement of Theorem~\ref{alfin} is presented in the Tables~\ref{dia23}-\ref{signa} in $\gamma(n)$-columnwise disposition.

\begin{theorem}\label{alfin} Let $k>0$.
The vertex subset of $T$ representing the $\mathbb{Z}_{2k+1}$-classes of $V(O_k)$ and the $\mathbb{D}_{2k+1}$-classes of $V(M_k)$ is given in $J=*|\gamma({\mathcal S}\setminus\{b(0)\})=*|\Gamma=*|\gamma(\mathbb{N})$ by the prefix $$\xi_k^k=\xi_1^1|\xi_2^2|\cdots |\xi_{k-1}^{k-1}|\xi_{k-1}^k,\mbox{ with partition }\{\xi_1^1,\xi_2^2,\ldots,\xi_{k-1}^{k-1},\xi_{k-1}^k\},$$ further refined by splitting the last column $\xi_{k-2}^k$ of $\xi_{k-1}^k$ into the sets $\xi_{k-2}^k$ of its first $k-1$ entries and $X_{k-2}^k$ of its last entry, $a_{k-1}a_{k-2}\cdots a_1=12\cdots (k-1)$. The sizes $|\xi_1^1|$, $|\xi_2^2|$, $\ldots$, $|\xi_{k-1}^{k-1}|$, $|\xi_{k-2}^k|$, $|X_{k-2}^k|$ form the line $\Delta'_{k-1}$ of $\Delta'$.
\end{theorem}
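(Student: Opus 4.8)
The plan is to prove Theorem~\ref{alfin} by tracking, via the castling procedure of Section~\ref{sec4}, exactly which TRGSs $b(n)$ arise as representatives of the $\mathbb{Z}_{2k+1}$-classes of $V(O_k)$ (equivalently the $\mathbb{D}_{2k+1}$-classes of $V(M_k)$), and then identifying that set of $n$'s with the prefix $\xi_k^k$ of $J$. The starting point is the fact, established around Example~\ref{catalan} and Definition~\ref{wesay}, that the $k$-blown Dyck words (and hence the anchored $k$-blown Dyck words representing the required classes, by Lemma~\ref{anch}) are in bijective correspondence with the prefix $[0,\mathcal{C}_k-1]$ of $\mathbb{N}$, i.e.\ with the subtree $T_k\subseteq T$ of RGSs of length $k-1$. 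So the first step is to describe $T_k$ combinatorially: a vertex $b(n)$ of $T$ lies in $T_k$ precisely when $\lambda(n)\le k$, equivalently when $\mathcal{C}_{k-1}>n$ is not forced, i.e.\ when $n<\mathcal{C}_k$, using Observation part (A) ($\rho^{-1}(0)=\{\mathcal{C}_k\}$) to pin down the boundary.

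Next I would reconcile the enumeration of $T_k$ with the string $J$. By construction (display~\eqref{J(2)}), $J=*\,|\,\gamma(\mathbb{N})$, so a prefix of $J$ of length $N+1$ records the indices $\gamma(1),\ldots,\gamma(N)$, i.e.\ the ``shape data'' of $b(1),\ldots,b(N)$ along $\mathcal{S}$. The claim $\xi_k^k=\xi_1^1|\xi_2^2|\cdots|\xi_{k-1}^{k-1}|\xi_{k-1}^k$ is then the assertion that this prefix has length exactly $\mathcal{C}_k$ and decomposes in the stated way. The key computation is that $|\xi_\gamma^b|$ equals the number of TRGSs of the appropriate ``level'', and that the partition sizes $|\xi_1^1|,|\xi_2^2|,\ldots,|\xi_{k-1}^{k-1}|,|\xi_{k-2}^k|,|X_{k-2}^k|$ reproduce the reversed Catalan triangle line $\Delta'_{k-1}$. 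For this I would use the recursive definition of $\xi_a^{a+b}$ in Table~\ref{tab1}, namely $\xi_a^{a+b}=a_{a+b}|1_1|\xi_2^2|\cdots|\xi_{a-1}^{a-1}|\xi_{a-1}^a|\cdots|\xi_{a-1}^{a+b}$, together with the standard fact that the entries of Catalan's triangle $\Delta$ (hence of $\Delta'$) sum, along line $k-1$, to $\mathcal{C}_k$ and satisfy the ballot-number recursion; one then checks by induction on $k$ that the lengths $|\xi_\gamma^\gamma|$ match the $\Delta'$-entries, the inductive step being exactly the ballot recursion $\mathcal{C}_{k+1}=\sum_{j=0}^k \mathcal{C}_j\mathcal{C}_{k-j}$ cited in Example~\ref{catalan}. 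The refinement splitting $\xi_{k-2}^k$ into its first $k-1$ entries and the last entry $X_{k-2}^k$ corresponding to $b(\mathcal{C}_k-1)=12\cdots(k-1)$ is then just the observation that the last TRGS in $T_k$ is the maximal increasing string $a_{k-1}\cdots a_1=12\cdots(k-1)$ (this is the lexicographically largest RGS of length $k-1$, cf.\ the pattern in~\eqref{(1)}), which occupies the final slot of the last column.

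Concretely the argument proceeds: (i) show $V(T_k)=\{b(n):0\le n<\mathcal{C}_k\}$, using Definition~\ref{wesay} and the bound there and Observation~(A); (ii) show that the $\gamma$-data of $b(1),\ldots,b(\mathcal{C}_k-1)$ is read off $J$ as the prefix obtained by concatenating the blocks $\xi_1^1,\ldots,\xi_{k-1}^{k-1},\xi_{k-1}^k$, by comparing the block structure of Table~\ref{tab1} against the column-wise (by $\gamma(n)$) listing of Tables~\ref{dia23}--\ref{signa}; (iii) verify the length bookkeeping against $\Delta'_{k-1}$ by induction, with the Catalan/ballot recursion as the inductive engine; (iv) peel off the last entry of the last column as $X_{k-2}^k$, corresponding to $b(\mathcal{C}_k-1)=12\cdots(k-1)$. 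I expect the main obstacle to be step (ii): making precise, and then rigorously checking, the claim that the recursively-built strings $\xi_\gamma^b$ of Table~\ref{tab1} are \emph{literally} the $\gamma$-columns of the tree $T$ as enumerated by castling — that is, that the self-similar block decomposition of $J$ coincides with the branching structure of $T_k$ under $\rho$. Once that identification is set up cleanly, the length count and the $\Delta'_{k-1}$ statement follow from the well-known combinatorics of the Catalan triangle, and the correspondence with $V(O_k)$ and $V(M_k)$ is exactly Lemma~\ref{anch} together with the $\mathbb{Z}_{2k+1}\leftrightarrow\mathbb{D}_{2k+1}$ dictionary of Section~\ref{2f}.
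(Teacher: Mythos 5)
Your proposal is correct and follows essentially the same route as the paper: the paper's entire proof consists of the single observation that the statement encodes the truncated tree $T[{\mathcal S}\cap b([0,{\mathcal C}_k-1])]$ via the prefix $\xi_k^k$ of $J$ and the line $\Delta'_{k-1}$ of $\Delta'$, which is exactly your step (i) together with a bare assertion of your steps (ii)--(iv). The inductive length bookkeeping against the Catalan/ballot recursion and the identification of the recursive blocks $\xi_\gamma^b$ with the $\gamma$-columns of $T$ -- the point you rightly flag as the real work -- are not carried out in the paper either, so your plan is, if anything, more complete than the published argument.
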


\begin{table}[htp]
$$\begin{array}{|c|c|c|c|c|c|c|c|c|}\hline
&&&&&&&&\tau_0^0=1\\
&&&&&&&\tau_1^1=1&\tau_0^1=1\\
&&&&&&\tau_2^2=2&\tau_1^2=2&\tau_0^2=1\\
&&&&&\tau_3^3=5&\tau_2^3=5&\tau_1^3=3&\tau_0^3=1\\
&&&&\tau_4^4=14&\tau_3^4=14&\tau_2^4=9&\tau_1^4=4&\tau_0^4=1\\
&&&\tau_5^5=42&\tau_4^5=42&\tau_3^5=28&\tau_2^5=14&\tau_1^5=5&\tau_0^5=1\\
&&\tau_6^6=132&\tau_5^6=132&\tau_4^6=90&\tau_3^6=48&\tau_2^6=20&\tau_1^6=6&\tau_0^6=1\\
&\tau_7^7=429&\tau_6^7=429&\tau_5^7=297&\tau_4^7=165&\tau_3^7=75&\tau_2^7=27&\tau_1^7=7&\tau_0^7=1\\
\cdots\cdots&\cdots\cdots&\cdots\cdots&\cdots\cdots&\cdots\cdots&\cdots\cdots&\cdots\cdots&\cdots\cdots&\cdots\cdots\\\hline
\end{array}$$
\caption{An initial detailed portion of Catalan's reversed triangle $\Delta'$.}
\label{tab4}
\end{table}

\begin{proof}  
The statement represents the set of vertices of the induced truncated tree\\ $T[{\mathcal S}\cap b([0,{\mathcal C}_k-1])]$, ($1\le k\in\mathbb{N}$)
via the prefix $\xi_k^k$ of $J$ and the line $\Delta'_{k-1}$ of $\Delta'$. \end{proof}

\begin{theorem}\label{fidel}
The sequence $h({\mathcal S}\!\setminus\!b(0))$ can be recreated by stepwise generation of the induced truncated trees $T[{\mathcal S}\cap b([0,{\mathcal C}_k-1])]$, ($1\le k\in\mathbb{N}$). In the $k$-th step, the determinations specified in {\rm Theorems~\ref{thm1}-\ref{thm2}} are performed in the natural order of the TRGS's.
The $k$-step completes those determinations, namely $(n,h(n))\rightarrow(r,h(r))$,
for the lines of $\Delta'$ corresponding to the sets $\xi_j^j$  ($j=1,\ldots,k-1$),
and ends up with the determinations $(n,h(n))\rightarrow(r,h(r))$ for $j=k$ in the line corresponding to $\xi_{k-2}^k$ and $(n,h(n))\rightarrow(r,h(r))$ in the final line for $j=k+1$, corresponding to $X_{k-2}^k$.
\end{theorem}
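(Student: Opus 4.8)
\smallskip
\noindent\emph{Proof plan.} The plan is to argue by induction on $k$, the induction hypothesis being that the values $h(n)$ for every $b(n)\in\mathcal S\cap b([0,\mathcal C_{k-1}-1])\setminus\{b(0)\}$ have already been produced, in the natural order of the TRGS's, by the determinations $(n,h(n))\to(r,h(r))$ of Theorems~\ref{thm1}--\ref{thm3} (and Corollaries~\ref{cor1}--\ref{cor2}), laid out blockwise as the line $\Delta'_{k-2}$ of Catalan's reversed triangle prescribes via Theorem~\ref{alfin}. The base cases are immediate: the only vertex of $T_2$ that carries an $h$-value is $b(1)=1$, which is of the form $10\cdots 0$ (and is also a thread endvertex), so $h(1)=0$ by Theorem~\ref{thm1}.

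\smallskip
For the inductive step I would first use Theorem~\ref{alfin} to identify the non-root vertices of $T[\mathcal S\cap b([0,\mathcal C_k-1])]$ with the prefix $\xi_k^k=\xi_1^1|\xi_2^2|\cdots|\xi_{k-1}^{k-1}|\xi_{k-1}^k$ of $J$, so that the vertices gained in passing from $T_{k-1}$ to $T_k$ are exactly those in the final block $\xi_{k-1}^k$, namely the length-$(k-1)$ TRGS's, whose indices $r$ satisfy $\mathcal C_{k-1}\le r<\mathcal C_k$. Reading these in increasing order of $r$, I would show that each $b(r)$ is produced by (at least) one of the following rules, all applicable ones giving the same value: (i) if $b(r)=10^{k-2}=b(\mathcal C_{k-1})$ or $b(r)$ is an endvertex of a thread of $T$, then $h(r)=0$ by Theorem~\ref{thm1}; (ii) if $b(r)=1\,|\,b(n)$ for a length-$(k-2)$ TRGS $b(n)$ with $n>0$, then $n<\mathcal C_{k-1}$, so $h(n)$ is known by the hypothesis and $h(r)$ follows from it by Theorem~\ref{thm2}; (iii) if $b(r)=1\cdots(j-1)j\,a_{k-j-1}\cdots a_1$ is obtained by incrementing the entry immediately following the staircase prefix of $b(n)=1\cdots(j-1)(j-1)\,a_{k-j-1}\cdots a_1$ for some $2\le j\le k$, then $n<r$, so $b(n)$ has already been treated (it lies in $T_{k-1}$ or is an earlier length-$(k-1)$ TRGS) and $h(r)$ follows from $h(n)$ by Theorem~\ref{thm3}, which specializes to Corollaries~\ref{cor1}--\ref{cor2} when $j=2,3$. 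Grouping the determinations executed at step $k$ by the parameter $j$ should then reproduce, in order, the blocks $\xi_1^1,\ldots,\xi_{k-1}^{k-1}$ (for $j=1,\ldots,k-1$), the new block $\xi_{k-2}^k$ of length $k-1$ (for $j=k$), and the terminal single entry $X_{k-2}^k$ carrying $b(r)=12\cdots(k-1)=b(\mathcal C_k-1)$ (for $j=k+1$); by Theorem~\ref{alfin} the sizes of these blocks form the line $\Delta'_{k-1}$, which is the assertion.

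\smallskip
The hard part will be the combinatorial verification that the rules of Theorems~\ref{thm1}--\ref{thm3} jointly exhaust the length-$(k-1)$ TRGS's, that the source $b(n)$ of every non-base $b(r)$ precedes $b(r)$ in the natural order (so that the natural ordering of the TRGS's is a valid evaluation order for the dependency digraph of the updates), and that the partition by $j$ coincides with the $\xi$-refinement of Theorem~\ref{alfin}. Here the thread/braid decomposition of Section~\ref{2f} and the defining properties~1--4 of the families $\{\Phi_j\}$ do the real work: property~1 isolates the base vertices with $\gamma(\cdot)>1$ feeding Theorem~\ref{thm1}; properties~2--3 control the prepend-by-$1$ transitions and the increments interior to a braid; and property~4 pins down the last increment of each braid, which is precisely the determination that produces the terminal entry $X_{k-2}^k$. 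Once this is in place, the tally of determinations carried out at step $k$, grouped by type, matches line $\Delta'_{k-1}$ of $\Delta'$ as recorded in Theorem~\ref{alfin}, the induction closes, and the whole sequence $h(\mathcal S\setminus b(0))$ is recreated stepwise in the stated order.
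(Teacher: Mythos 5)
Your plan follows the same route as the paper's own proof, which merely asserts that the sequence $h({\mathcal S}\setminus b(0))$ is obtained by integrating stepwise applications of Theorem~\ref{thm3} in the prescribed order and that this yields the correspondence with the lines of $\Delta'$; your inductive framing, base case, and case split (i)--(iii) via Theorems~\ref{thm1}--\ref{thm3} are a more explicit rendering of exactly that argument. The verification you defer as ``the hard part'' (exhaustiveness of the rules, well-foundedness of the update order, and the match with the $\xi$-refinement of Theorem~\ref{alfin}) is likewise left implicit in the paper's two-sentence proof, so your proposal neither diverges from nor falls short of the published argument.
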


\begin{proof}
Theorem~\ref{fidel} is used to express the stepwise nature of the generation of the sequence $h({\mathcal S}\!\setminus\!b(0))$. The methodology in the statement is obtained by integrating steps applying Theorem~\ref{thm3} in the way prescribed, that yields the correspondence with the lines of $\Delta'$.\end{proof}

\begin{corollary}\label{coro} Let $v\in V(O_n)$ (resp., $v\in V(M_k)$).
The sequence of quadruples \begin{eqnarray}\label{13}(b,\gamma,\rho,h)({\mathbb N})=(b({\mathbb N}),\gamma({\mathbb N}),\rho({\mathbb N}),h({\mathbb N}))\end{eqnarray} allows to retrieve $v$ by locating either its oriented $(2k+1)$- (resp., $(4k+2)$-) cycle in the uniform 2-factor ({\rm Section~\ref{1f}}) or in a specific ${\mathbb Z}_{2k+1}$- (resp., ${\mathbb D}_{2k+1}$-) class ({\rm Sections~\ref{section6}-\ref{section8}}) and then locating $v$ in such cycle or class by departing from its only anchored Dyck word. The sequence {\rm(\ref{13})} allows to enlist all vertices $v$ by ordering their cycles or classes, including all vertices in each such cycle or class, starting with its anchored Dyck word.
\end{corollary}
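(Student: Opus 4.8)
The plan is to assemble the corollary from the constructions of the earlier sections, exhibiting an explicit reconstruction algorithm and then invoking a counting argument for completeness. First I would fix $k>0$ and restrict attention to the prefix $b([0,{\mathcal C}_k-1])$ of ${\mathcal S}$, which by Theorem~\ref{alfin} indexes exactly the $\mathbb Z_{2k+1}$-classes of $V(O_k)$ (equivalently, the $\mathbb D_{2k+1}$-classes of $V(M_k)$) via the prefix $\xi_k^k$ of $J$ and the line $\Delta'_{k-1}$. From the quadruple sequence $(b,\gamma,\rho,h)(\mathbb N)$ I would recover, for each such $n$: the TRGS $b(n)$ itself; its position $\gamma(n)$ of divergence from the parent $b(\rho(n))$ in $T$; and the single updated clone entry, since by Observation~\ref{obs} and Theorem~\ref{id} one has $\sigma_{\gamma(n)}(n)=k+h(n)$ or $\sigma_{\gamma(n)}(n)=h(n)$ according to the sign regime, while all remaining entries of $\sigma(n)$ coincide with those of $\sigma(\rho(n))$. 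Running this along $T$ in the natural order of the TRGS's, as prescribed by Theorem~\ref{fidel}, produces the clone $\sigma(n)$, and hence by the bijection of Theorem~\ref{signest} the tight Dyck nest $F(n)$, for every $n<{\mathcal C}_k$.

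Next I would blow $F(n)$ to its $k$-blown form $F^k(n)$ by iterating the $q(\cdot)$-operation of Lemma~\ref{z} and Definition~\ref{wesay}, read off the associated $k$-blown Dyck word $f^k(n)$, and anchor it to $0f^k(n)$; by Lemma~\ref{anch} the strings $\{0f^k(n):0\le n<{\mathcal C}_k\}$ give exactly one representative per $\mathbb Z_{2k+1}$-class of $V(O_k)$. To enlist a whole class one applies the cyclic shifts $^{\ell}\!F$, $\ell\in[0,2k]$, of (\ref{^0F}), which lists the $2k+1$ vertices of the class in a fixed order starting from the anchored word; the corresponding $\mathbb D_{2k+1}$-class of $V(M_k)$ is obtained through $\Psi_k^{-1}$ by adjoining the pairs $\{w,\aleph(w)\}$, giving its $4k+2$ vertices. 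For the alternative, graph-theoretical partition I would instead attach to $v=0f^k(n)$ the $2k$-permutation $\pi$ of Section~\ref{section8}, grow the oriented path $\overrightarrow{P^{2k}_v}$ traversing at each stage the arc coloured by the entry of $rev(\pi)$, and close it with the arc $\overrightarrow{e_v}\in{\mathcal F}_0$ exactly as in Section~\ref{1f}, obtaining the oriented $(2k+1)$-cycle $\overrightarrow{C_v^k}$ and its lift $\overrightarrow{_MC_v^k}$ of length $4k+2$; traversing these cycles from $v$ enlists all their vertices.

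It then remains to check completeness: there are exactly ${\mathcal C}_k$ classes (resp. cycles), each of size $2k+1$ in $O_k$ (resp. $4k+2$ in $M_k$), and ${\mathcal C}_k\,(2k+1)=\binom{2k+1}{k}=|V(O_k)|$, so the classes (resp. cycles) partition the vertex set; ordering the indices $n$ lexicographically and, within each block, ordering the vertices as above enlists every $v$ exactly once, which is the assertion. The main obstacle I anticipate is not a single deep step but the bookkeeping needed to verify that the reconstruction is genuinely deterministic end to end — in particular that the sign regime of Observation~\ref{obs}, hence the choice between $\sigma_{\gamma(n)}(n)=k+h(n)$ and $=h(n)$, is itself determined by data already recovered, so that nothing beyond $(b,\gamma,\rho,h)(\mathbb N)$ is covertly used; this is handled by noting that whether the substring $k_1k_2$ lies in $X$ or in $Y$ in the castling step of Section~\ref{sec4} is read off from $F(\rho(n))$ together with $\gamma(n)$, both of which are in hand at stage $n$.
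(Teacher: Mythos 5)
Your proposal is correct and follows essentially the same route as the paper's own (much terser) proof: use $\gamma(n)$ and $h(n)$ to update the parent clone along $T$, invoke the clone--nest bijection to recover the Dyck nest, then blow, anchor, and locate $v$ within its class or cycle. Your version is more careful on two points the paper glosses over --- you correctly attribute the clone-to-nest recovery to Theorem~\ref{signest} (the paper's proof cites Theorem~\ref{id} for this step) and you supply the counting argument ${\mathcal C}_k(2k+1)=\binom{2k+1}{k}$ needed to justify the ``enlist all vertices'' claim --- but these are refinements, not a different approach.
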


\begin{proof}
Let $n\in{\mathbb N}$. Then, $\gamma(n)$ yields the required update location in the TRGS $b(n)\in{\mathcal S}$ with respect to the parent TRGS $b(\rho(n))\in{\mathcal S}$, while $h(n)$ yields the specific update, as determined in Theorems~\ref{alfin}-\ref{fidel}. This produces the corresponding clone. Then, Theorem~\ref{id} allows to recover the original Dyck word from that clone, and thus the corresponding vertex of $O_k$ (resp., $M_k$) by local translation in its containing cycle in the cycle factor of Section~\ref{1f}, or cyclic (resp., dihedral) class (as pointed out in Section~\ref{2f}).
\end{proof}

\section{Hamilton cycles in odd and middle-levels graphs}\label{Ham}

A {\it flippable tuple} \cite{Hcs} in a vertical list $L(n)$ is a pair $FT(n,j)$ of contiguous lines in $L(n)$ having its $k$-supplementary entry pair at the $j$-th position counted from the right ($j\in[0,2k]$). Let $2<\kappa\in\mathbb{Z}$.  A {\it flipping $\kappa$-cycle} \cite{Hcs} is a finite sequence of pairwise different vertical lists $L(n_j)$, ($n=1,\ldots,\kappa)$ determining a $2\kappa$-cycle in $O_k$ containing successive pairwise disjoint edges whose endvertex pairs $\{\chi_0^j,\chi_1^j\}$ are flippable tuples in their corresponding vertical lists $L(n_j)$ ($n=1,\ldots,\kappa$), with the
vertical pairs of number $k$-supplementary entries happening at pairwise different coordinate positions.

\begin{example}\label{k=3-310} As mentioned for $k=3$,  the five right columns in Table~\ref{uniform} contain the lists $L(n)$ ($n\in[0,4]$). These lists contain in those five final columns additional information that allows to assemble the flipping triples $\tau_0=(L(0),L(1),L(2))$ and $\tau_1=(L(0),L(3),L(4))$, with the initial line $0F^3(n)$ of each such $L(n)$ having a sole underlined entry per flippable tuple corresponding to the underlined entries of its two constituent contiguous $(2k+1)$-tuples, say  $\chi_0,\chi_1$. Here, $\chi_1$ is provided with a superindex containing: {\bf(i)} an index $z\in\{0,1\}$ relating to a sole associated triple $\tau_z$ ($z\in\{0,1\}$); {\bf(ii)} a vertical arrow indicating a definite orientation of the edge $\chi_0\chi_1$ which determines an arc $\chi'\chi''$ with $\{\chi_0,\chi_1\}=\{\chi',\chi''\}$; {\bf(iii)} an index $n'$ such that $L(n')$ is in $\tau_z$ and contains a flippable tuple determining an arc $\chi'''\chi''''$ so that the arc $\chi''\chi'''$ is in $\tau_z$. By considering the three flippable tuples obtained in this way and the additional neighbor adjacencies, an oriented 6-cycle is obtained.
The triangles $\tau_0,\tau_1$ form the two hyperedges of a connected acyclic hypergraph on the vertex set $\{L(n)|i\in[0,4]\}$ that yields the simplest case of the construction of a Hamilton cycle in the odd graphs, in this case in $O_3$. Each of $\tau_0$ and $\tau_1$ yields a 21-cycle in $O_3$ by means of symmetric differences. The presence of $L(0)$ in both $\tau_0$ and $\tau_1$ then allows to transform both 21-cycles into the claimed Hamilton cycle of $O_3$ by means of corresponding flippable tuples in $L(0)$, one for $\tau_0$ and the other one for $\tau_1$.
\end{example}

Consider the following Dyck-word collections (triples, quadruples, etc.):
\begin{eqnarray}\label{!}\begin{array}{lllllr}
S_1(w)&=&\{\xi_{1^w}^1=0w001\ul{1}1,&\xi_{1^w}^2=0w\ul{0}1101,&\xi_{1^w}^3=0w0101\ul{1}&\},\\
S_2&=& \{\xi_2^1\hspace{2.0mm}=0\ul{0}110011,&\xi_2^2\hspace{2.0mm}=0010011\ul{1},&\xi_2^3\hspace{2.0mm}=00010\ul{1}11&\},\\
S_3&=& \{\xi_3^1\hspace{2.0mm}=00011\ul{1},&\xi_3^2\hspace{2.0mm}=0100\ul{1}1,&\xi_3^3\hspace{2.0mm}=\ul{0}10101&\},\\
S_4&=&\{\xi_4^1\hspace{2.0mm}=00011\ul{1},&\xi_4^2\hspace{2.0mm}=0010\ul{1}1,&\xi_4^3\hspace{2.0mm}=01\ul{0}011,&\xi_4^4=\ul{0}10101\},
\end{array}\end{eqnarray}

\noindent (based on \cite[display~(4.2)]{Hcs}) where $w$ is any (possibly empty) Dyck word. Consider also the sets $\ul{S}_1(w)$, $\ul{S}_2$, $\ul{S}_3$, $\ul{S}_4$ obtained respectively from $S_1(w)$, $S_2$, $S_3$, $S_4$ by having their component Dyck paths
$\ul{\xi}_{1^w}^j$, $\ul{\xi}_2^j$, $\ul{\xi}_3^j$, $\ul{\xi}_4^j$
defined as the complements of the reversed strings of the corresponding Dyck paths
$\xi_{1^w}^j$, $\xi_2^j$, $\xi_3^j$, $\xi_4^j$.
Note that each Dyck word in the subsets of display~(\ref{!}) has just one underlined entry. By denoting
\begin{eqnarray}\label{denot}\xi_{1^w}^j=x_sx_{s-1}\cdots x_2x_1x_0\;\mbox{ and }\;\xi_i^j=x_sx_{s-1}\cdots x_2x_1x_0,\mbox{ for }i=2,3,4,\end{eqnarray}
where $j=1,2,3$ for $i=1,2,3$ and $j=1,2,3,4$ for $i=4$ and adequate $s$ in each case, the barred positions in~(\ref{!}) are the targets of the following correspondence $\Phi$:
\begin{eqnarray}\label{!!}\begin{array}{llll}
\Phi(\xi_{1^w}^1)=1,&\Phi(\xi_{1^w}^2)=4,&\Phi(\xi_{1^w}^3)=0,&\\
\Phi(\xi_2^1)\hspace{2.0mm}=6,&\Phi(\xi_2^2)\hspace{2.0mm}=0,&\Phi(\xi_2^3)\hspace{2.0mm}=2,&\\
\Phi(\xi_3^1)\hspace{2.0mm}=0,&\Phi(\xi_3^1)\hspace{2.0mm}=1,&\Phi(\xi_3^3)\hspace{2.0mm}=5,&\\
\Phi(\xi_4^1)\hspace{2.0mm}=0,&\Phi(\xi_4^2)\hspace{2.0mm}=1,&\Phi(\xi_4^3)\hspace{2.0mm}=3,&\Phi(\xi_4^4)=5.\\
\end{array}\end{eqnarray}
The correspondence $\Phi$ is extended over the Dyck words $\ul{\xi}_{1^w}^j$, $\ul{\xi}_2^j$, $\ul{\xi}_3^j$, $\ul{\xi}_4^j$ with their barred positions taken reversed with respect to the corresponding barred positions in $\xi_{1^w}^j$, $\xi_2^j$, $\xi_3^j$, $\xi_4^j$, respectively.

Adapting from \cite{Hcs}, we define an hypergraph $H_k$ with $V(H_k)$ formed by the lists $L(i)$ with $b(i)\in V(T)\cap b([0,{\mathcal C}_k-1])$ and as hyperedges the subsets

$$\{L(i_j)|j\in\{1,2,3\}\}\subset V(H_k)\mbox{ and }\{L(i_j)|j\in\{1,2,3,4\}\}\subset V(H_k)$$

\noindent whose member lists $L(i_j)$ have the Dyck words $f(i_j)$ corresponding to their initial rows, the Dyck nests $F(i_j)$, for $j=1,2,3\mbox{ or }j=1,2,3,4$, containing respective Dyck subwords as members in the collection (indexed by $j$)
$$\{\xi_{1^w}^j, \xi_2^j, \xi_3^j, \xi_4^j, \ul{\xi}_{1^w}^j, \ul{\xi}_2^j, \ul{\xi}_3^j, \ul{\xi}_4^j\}$$ in the same 6 or 8 fixed positions $x_i$ (for specific indices $i\in\{0,1,\ldots,s\}$ in~(\ref{denot})) and forming respective subsets
$$\{\xi_{1^w}^j|j=1,2,3\},\; \{\ul{\xi}_{1^w}^j|j=1,2,3\},\; \{\xi_4^j|j=1,2,3,4\},\; \{\ul{\xi}_4^j|j=1,2,3,4\},$$
$$\{\xi_i^j|j=1,2,3\}\;\mbox{ and }\;\{\ul{\xi}_i^j|j=1,2,3\},\;\mbox{ for both }i=2\mbox{ and }3.$$

\begin{example}\label{ex6} Two hyperedges $h_0,h_1$ of $H_3$ are obtained from the six indices $z\in\{0,1\}$ in items (i)-(iii) (see Example~\ref{k=3-310}) of corresponding superindices in Table~\ref{uniform}. These two hyperedges are obtained from the triples $\tau_0,\tau_1$ in Example~\ref{k=3-310}, with substrings $\xi_{1(\epsilon)}^j$ or $\xi_i^j$ ($j=3,4$) realized via the last six entries of the initial lines $F(i)$ of the lists $L(i)$ ($i\in[0,4]$), where the involved flippable tuples have those six entries overlined in the lower one of its two component lines. For $h_0$ (resp., $h_1$), $f(0)$, $f(1)$, $f(2)$ (resp., $f(0)$, $f(3)$, $f(4)$) are Dyck words
$\xi_{1^\epsilon}^1$, ${\ul{\xi}}_3^2$, $\xi_{1^\epsilon}^3$ (resp., $\xi_{1^\epsilon}^3$, $\xi_3^2$, $\xi_4^3$), with their entries in positions $\Phi(\xi_{1^\epsilon}^1)=1$, $\Phi({\ul{\xi}}_4^2)=4$, $\Phi(\xi_{1^\epsilon}^3)=0$ (resp., $\Phi(\xi_{1^\epsilon}^3)=0$, $\Phi(\xi_3^2)=1$, $\Phi(\xi_4^3)=5$) in thick trace. Then $H_3$ contains the connected subhypergraph $H_3'$ formed by $h_0,h_1$, that are both incident to the vertex $L(0)$ of $H_3$. This is used to construct the Hamilton cycle mentioned in Example~\ref{k=3-310}; $H'_3$ is schematically represented over the lower right in Table~\ref{dia26}.  \end{example}

\begin{example}\label{ex7} For $k=4$,
in a likewise manner to that of Example~\ref{ex6}, Table~\ref{dia26} shows
the 14 columns $L(i)$, for $i\in[0,12]$, where we use hexadecimal notation with $a=10,b=11,c=12,d=13$. The information contained in the table leads to a subhypergraph $H'_4$ of $H_4$ represented
in its lower-right corner with the hyperedges
$$h_0=(0,2,a),\; h_1=(8,7,5),\; h_2=(7,6,a),\; h_3=(1,4,6),\; h_4=(1,9,d),\; h_5=(3,b,c,d).$$
The respective triples of Dyck words $\xi_{1^w}^j$ or $\ul{\xi}\,_{1^w}^j$ or $\xi_i^j$ or $\ul{\xi}\,_i^j$ ($j=2,3,4$) may be expressed as follows by replacing the Greek letters $\xi$ by the values of the correspondence $\Phi$:
$$
(\ul{5}_3^1,\ul{4}_3^2,\ul{0}_3^3),\;
(6_2^1,0_2^2,2_2^3), \;
(1_{1^{01}}^1,4_{1^{01}}^2,0_{1^{01}}^3),\;
(1_{1^\epsilon}^1,4_{1^\epsilon}^2,0_{1^\epsilon}^3),\;
(0_3^1,1_3^2,5_3^3),\;
(0_4^1,1_4^2,3_4^3,5_4^4),$$

\begin{table}[htp]
$$\begin{array}{|l|l|l|l|l|}\hline
^{k=4}_{i=0}&^{k=4}_{i=1}&^{k=4}_{i=2}&^{k=4}_{i=3}&^{k=4}_{i=4}\\
00\ol{{\ul0}00111}1|\ol{\xi}_4^1&0\ol{00011{\ul1}}01|\xi_4^1&00\ol{0{\ul0}1101}1|\xi_{1^\epsilon}^2& 001\ol{00011{\ul1}}|\xi_4^1&0\ol{0{\ul0}1101}01|\xi_{1^\epsilon}^2\\
                 &0\ol{0001{\ul1}1}01|\xi_{1^\epsilon}^1                 &                                                     &                                           &                                              \\\hline
0{\ul1}{2}344321|_{1000}\!&02344{3}{2}{\ul1}1|_{7101}\!&0{\ul1}3{4}43221|_{1202}\!&022{\ul1}3443{1}|_{3303}\!&03{4}4322{\ul1}1|_{7404}\\
4332101{\ul2}4|_{7550}&43210123{\ul4}|_{8041}&433102{\ul2}14|_{5b42}&4213310{\ul2}4|_{7863}&43102213{\ul4}|_{8334}\\
110{\ul3}44322|_{3420} &1{\ul2}3443210|_{1081}&110{\ul3}32442|_{3922}&14410{\ul3}322|_{5c73}&13443{\ul2}210|_{5284}\\
32211{\ul0}443|_{5d50}&32210{\ul1}344|_{5241}&32\ol{21{\ul4}410}3|_{4c72}^{0\uparrow 0}&310221{\ul4}43|_{6c23}&321012{\ul3}44|_{6034}\\
2211{\ul0}4433|_{4d40}&2\ol{21{\ul4}4{3}3}10|_{3b81}^{3\uparrow 6}&22{\ul1}103443|_{2442}&2442{\ul1}1033|_{4963}&2{\ul3}4432110|_{1184}\\
211044{\ul3}32|_{6d30}&2110{\ul3}3244|_{4931}&2134431{\ul0}2|_{7372}&21013443{\ul2}|_{8223}&211{\ul0}24433|_{3634}\\
33{\ul2}210144|_{2a50}&33{\ul2}214410|_{2c81} &332101{\ul2}44|_{6542}&3{\ul4}4\ol{321012}|_{1063}^{5\uparrow b}&33{\ul2}442110|_{2984}\\
10\ol{234432}{\ul1}|_{8110}^{0\downarrow a}&102442{\ul1}33|_{6811}&10344322{\ul1}|_{8412}&10{\ul1}234432|_{2013}&1\ol{022{\ul1}44}33|_{4b14}^{3\uparrow 1}\\
{\ul4}43210123|_{0050}&{\ul4}\ol{421013}32|_{0741}^{4\uparrow d}&{\ul4}43102213|_{0342}&{\ul4}43321012|_{0563}&{\ul4}41033221|_{0c34}\\\hline
^{k=4}_{i=5}&^{k=4}_{i=6}&^{k=4}_{i=7}&^{k=4}_{i=8}&^{k=4}_{i=9}\\
0\ol{00010{\ul1}11}|\xi_2^3&0\ol{00101{\ul1}}01|\xi_{1^\epsilon}^3& 0\ol{0010011{\ul1}}|\xi_2^2& 0\ol{0{\ul0}110011}|\xi_2^1& 0\ol{0100{\ul1}1}01|\xi_3^2\\
                  &                        0\ol{001{\ul0}1101}|{\ul\xi}_{1^{01}}^3& 0\ol{001001{\ul1}1}|\xi_{1^{01}}^1&      &\\\hline
0{\ul1}2443{3}21|_{1505}&0244{3}3{2}{\ul1}1|_{7606}&0{\ul1}33244{2}{1}|_{1707}&02{4}42{\ul1}331|_{5808}&03324{4}2{\ul1}1|_{7909}\\
4332110{\ul2}4|_{7665}&43211023{\ul4}|_{8156}&433{\ul2}21014|_{3a67}&4210133{\ul2}4|_{7738}&43221013{\ul4}|_{8259}\\
11044{\ul3}322|_{5d25}&1{\ul2}4433210|_{1586}&11{\ul0}234432|_{2127}&134431{\ul0}22|_{6368}&133{\ul2442}10|_{3789}\\
322101{\ul4}43|_{6a45}&322{\ul1}10344|_{3456}&3244210{\ul1}3|_{7767}&32101244{\ul3}|_{8538}&31022{\ul1}344|_{5329}\\
2\ol{21{\ul4}41033}|_{3c65}^{1\uparrow 8}&22{\ul1}344310|_{2386}&2\ol{2101{\ul4}433}|_{5a37}^{2\uparrow a}&2{\ul3}4432101|_{1078}&2\ol{442{\ul1}{3}3}10|_{4889}^{4\uparrow 1}\\
2110{\ul3}4432|_{4435}&21331{\ul0}244|_{5856}&214410{\ul3}32|_{6c47}&211{\ul0}23443|_{3138}&210133{\ul2}44|_{6729}\\
33{\ul2}211044|_{2d65}&3310{\ul2}4421|_{4846}&3310{\ul2}2144|_{4b37}&33{\ul2}442101|_{2778}&3{\ul4}4310221|_{1359}\\
10244332{\ul1}|_{8615}&1\ol{03322{\ul1}44}|_{6c16}^{2\uparrow 8}&10332442{\ul1}|_{8c17}&1\ol{022{\ul1}3443}|_{4318}^{1\downarrow 7}&10{\ul1}344322|_{2219}\\
{\ul4}43211023|_{0155}&{\ul4}\ol{421103}32|_{0956}^{3\downarrow 4} &{\ul4}\ol{43221013}|_{0267}^{1\uparrow 5}&{\ul4}42133102|_{0878}&{\ul4}43310221|_{0b59}\\\hline
^{k=4}_{i=a}&^{k=4}_{i=b}&^{k=4}_{i=c}&^{k=4}_{i=d}&H'_3:\\
0\ol{0010101{\ul1}}|\xi_{1^{01}}^3&  001\ol{0010{\ul1}1}|\xi_4^2&001\ol{01{\ul0}011}|\xi_4^3 &  0\ol{{\ul0}10101}01|\xi_4^4&\hspace*{10mm}^{1\hspace{9mm}3}_{|\;>\;0\;<\;|}\\
00\ol{01010{\ul1}}1|{\ul\xi}_4^4&                                &                           &                              001\ol{{\ul0}10101}|\xi_4^4& \hspace*{10mm}^{2\hspace{9mm}4}\\\hline
0{\ul1}44332{2}{1}|_{1a0a}&022{\ul1}443{3}1|_{3b0b}&03322{\ul{1}}441|_{5c0c}&0{4}4{3}322{\ul1}1|_{7d0d}&H'_4:\\
433{\ul2}21104|_{3d7a}&41033{\ul2}214|_{5c2b}&421\ol{1033{\ul2}4}|_{794c}^{5\uparrow d}&43221103{\ul4}|_{846d}&\\
12{\ul0}244331|_{262a}&1331{\ul0}2442|_{484b}&144331{\ul0}22|_{6b6c}&14433{\ul2}210|_{5a8d}&                        \hspace*{10mm}^5_{|\;>\;8}\\
32442{\ul1}103|_{597a}&3102442{\ul1}3|_{785b}&32110244{\ul3}|_{864c}&321102{\ul3}44|_{614d}&                         \hspace*{10mm}^{7\hspace{9mm}0}_{|\;>\;a\;<\;|}\\
2210{\ul1}3443|_{423a}&244\ol{210{\ul1}33}|_{675b}^{5\uparrow c}&244{\ul3}32101|_{357c}&244{\ul3}32110|_{368d}& \hspace*{10mm}^{6\hspace{9mm}2}_{|\;>\;4}\\
2144331{\ul0}2|_{7b7a}&21014433{\ul2}|_{8a2b}&2101{\ul2}3443|_{402c}&210\ol{1{\ul2}4433}|_{452d}^{5\uparrow c}& \hspace*{10mm}^1_{|\;>\;9}\\
33\ol{2110\ul{2}4}4|_{665a}^{0\uparrow 2}&3{\ul4}4321102|_{117b}&3{\ul4}4322101|_{127c}&3{\ul4}4322110|_{148d}&  \hspace*{10mm}^{d\;-\;3}_{|\hspace*{5mm}|}\\
10443322{\ul1}|_{8d1a}&10{\ul1}244332|_{251b}&10{\ul1}332442|_{271c}&1\ol{0{\ul1}4433}22|_{2a1d}^{4\uparrow 9}& \hspace*{10mm}^{c\;-\;b}\\
{\ul4}\ol{43221103}|_{047a}^{2\uparrow 6}&{\ul4}43321102|_{067b}&{\ul4}43322101|_{0a7c}&{\ul4}43322110|_{0d8d}&\\\hline
\end{array}$$
\caption{Uniform 2-factors provided by permutations $\pi$ for $k=4$.}
\label{dia26}
\end{table}

\noindent where we can also write $(\ul{5}_3^1,\ul{4}_3^2,\ul{0}_3^3)=\ul{(0_3^1,1_3^2,5_3^3)}$.
 The flippable tuples $FT(i,j)$ allow to compose five flipping $6$-cycles and one flipping $8$-cycle, allowing to integrate by symmetric differences a Hamilton cycle in $O_4$.

We represent $H_k$ as a simple graph $\psi(H_k)$ with $V(\psi(H_k))=V(H_k)$ by replacing each hyperedge $e$ of $H_k$ by the clique $K(e)=K(V(e))$ so that $\psi(H_k[e])=K(e)$, being such replacements the only source of cliques of $\psi(H_k)$. A {\it tree} $T$ of $H_k$ is a subhypergraph of $H_k$ such that: {\bf(a)} $\psi(T)$ is a connected union of cliques $K(V(e))$; {\bf(b)} for each cycle $C$ of $\psi(H_k)$, there exist a unique clique $K(V(e))$ such that $C$ is a subgraph of $K(e)$. A {\it spanning tree} $T$ of $H_K$ is a tree of $H_k$ with $V(T)=V(H_k)$. Clearly, the subhypergraphs $H'_k$ of $H_k$  for $k=3$ and 4 are corresponding spanning trees.

A subset $G$ of hyperedges of $H_k$ is said to be {\it conflict-free} \cite{Hcs} if: {\bf(a)} any two hyperedges of $G$ have at most one vertex in common; {\bf(b)} for any two hyperedges $g,g'$ of $G$ with a vertex in common, the corresponding images by $\Phi$ (as in display~(\ref{!!})) in $g$ and $g'$ are distinct. A proof of the following final result is included, as our viewpoint and notation differs from that of its proof in \cite{Hcs}.
\end{example}

\begin{theorem}\label{L6} A conflict-free spanning tree of $H_k$ yields a Hamilton cycle of $O_k$, for every $k\ge 3$. Moreover, distinct conflict-free spanning trees of $H_k$ yield distinct Hamilton cycles of $H_k$, for every $k\ge 6$.
 \end{theorem}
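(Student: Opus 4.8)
The plan is to follow the \emph{2-factor-plus-flips} strategy of M\"utze et al.\ \cite{Hcs,u2f}, recast in the present notation. I start from the uniform 2-factor $\mathcal{C}$ of $O_k$ of Section~\ref{1f}: the disjoint union of the $\mathcal{C}_k$ oriented $(2k+1)$-cycles indexed by the lists $L(n)$, $0\le n<\mathcal{C}_k$, which are exactly the vertices of $H_k$. To each hyperedge $e=(L(n_1),\dots,L(n_\kappa))$ of $H_k$ (a flipping $\kappa$-cycle) I attach its \emph{flip cycle} $Z_e$: the $2\kappa$-cycle of $O_k$ that alternates between the $\kappa$ flippable-tuple edges, one lying on each cycle $L(n_j)$ at the position recorded by $\Phi$, and the $\kappa$ connecting arcs produced by items (i)--(iii) of Example~\ref{k=3-310}; \emph{flipping along} $e$ replaces the current spanning $2$-regular subgraph $\mathcal{F}$ by $\mathcal{F}\triangle Z_e$. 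The first step is the local lemma, read off from the shape of $Z_e$: if $L(n_1),\dots,L(n_\kappa)$ lie in $\kappa$ pairwise distinct components of $\mathcal{F}$ and the $\kappa$ flippable-tuple edges of $Z_e$ all belong to $\mathcal{F}$, then $\mathcal{F}\triangle Z_e$ is again a $2$-factor in which those $\kappa$ components have been amalgamated into one cycle (deleting one edge from each of the $\kappa$ cycles opens them into $\kappa$ paths, and the $\kappa$ connecting arcs re-link these paths into a single cycle); $2$-regularity and the vertex set are visibly preserved.

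For the implication, let $T$ be a conflict-free spanning tree of $H_k$ with hyperedges $e_1,\dots,e_t$. By conditions (a)--(b) in the definition of a tree of $H_k$, the cliques $K(e_i)$ are the blocks of the connected graph $\psi(T)$, glued in a tree-like fashion; hence the $e_i$ can be ordered so that for every $i>1$ the set $V(e_i)\cap\bigl(V(e_1)\cup\cdots\cup V(e_{i-1})\bigr)$ is a single vertex $L(n^{*}_i)$, while $\bigcup_i V(e_i)=V(H_k)$. I would then prove, by induction on $i$, that after flipping along $e_1,\dots,e_i$ in this order the current subgraph is a $2$-factor consisting of one cycle spanning precisely the $(2k+1)$-cycles indexed by $V(e_1)\cup\cdots\cup V(e_i)$, together with the still untouched cycles $L(n)$ for the remaining indices. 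In the inductive step every cycle of $e_{i+1}$ other than $L(n^{*}_{i+1})$ is untouched, so its flippable-tuple edge lies in $\mathcal{F}$; at the shared cycle $L(n^{*}_{i+1})$ conflict-freeness (distinct $\Phi$-images along $e_{i+1}$ and every earlier hyperedge through $L(n^{*}_{i+1})$) forces $e_{i+1}$ to use a flippable tuple at a position not yet removed, so that edge still lies on the current big cycle; the local lemma then amalgamates the big cycle with the $\kappa_{i+1}-1$ fresh $(2k+1)$-cycles of $e_{i+1}$. Since flipping along $e_i$ drops the number of components by $\kappa_i-1$ and the tree-of-cliques identity gives $\sum_i(\kappa_i-1)=|V(H_k)|-1=\mathcal{C}_k-1$, after step $t$ the $2$-factor has a single component: a Hamilton cycle of $O_k$.

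For the injectivity statement, fix the $2$-factor $\mathcal{C}$ once and for all; if $T$ is a conflict-free spanning tree with resulting Hamilton cycle $H=\mathcal{C}\triangle Z_{e_1}\triangle\cdots\triangle Z_{e_t}$, consider $D:=H\triangle\mathcal{C}$. The plan is to prove that for $k\ge 6$ the flip cycles $\{Z_e:e\in T\}$ of a conflict-free family are pairwise edge-disjoint; then $D=\bigsqcup_{e\in T}Z_e$, each connected component of $D$ is exactly one flip cycle $Z_e$, and from the flippable-tuple edges of $Z_e$ — each of which lies on a unique $(2k+1)$-cycle $L(n_j)$ at a $\Phi$-prescribed position — one reads off the hyperedge $e$, and hence $T$. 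This exhibits $T\mapsto H$ as injective, so distinct conflict-free spanning trees of $H_k$ give distinct Hamilton cycles of $O_k$.

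I expect the pairwise edge-disjointness of the flip cycles to be the main obstacle, and this is precisely where the hypothesis $k\ge 6$ enters (mirroring \cite{Hcs}). Two flip cycles $Z_e,Z_{e'}$ can meet a common cycle $L(n)$ only when $e$ and $e'$ share the vertex $L(n)$ of $H_k$, and there conflict-freeness already forces distinct positions, hence distinct flippable-tuple edges of $L(n)$; so the genuine work is to rule out coincidences among the \emph{connecting arcs} of distinct flip cycles. This uses the explicit form of the Dyck words $\xi_i^j,\underline{\xi}_i^j$ of display~(\ref{!}), the invariance of their six or eight marked coordinates, and the freedom in the auxiliary Dyck word $w$ in $S_1(w)$ — freedom that disappears when $2k+1$ is too small, which is the reason the second assertion is stated only for $k\ge 6$. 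Everything else (the local lemma, the block-tree bookkeeping, the admissibility of each successive flip) should be routine once the structure of the flip cycles $Z_e$ and of $H_k$ is in place.
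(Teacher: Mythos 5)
Your route is genuinely different from the paper's. You prove the implication in the statement directly: starting from the uniform $2$-factor of Section~\ref{1f}, you order the hyperedges of the conflict-free spanning tree along its block (tree-of-cliques) structure, establish a local amalgamation lemma for the symmetric difference with a flip cycle, and induct, using conflict-freeness to guarantee that the flippable-tuple edge at the shared list $L(n^{*}_{i+1})$ is still present. The paper's proof does none of this: it is a recursive \emph{construction} of one explicit spanning tree $H'_k$ of $H_k$, via the partition $D_k=F_k\cup 01E_{k-1}\cup 01F_{k-1}$, the auxiliary sets $\mathcal{E}_k,\mathcal{F}_k,\mathcal{T}_k$ of displays~(\ref{rec1})--(\ref{rec2}), the inner lemma on spanning trees of $H_k[F_k^j]$ with formula~(\ref{formula}), and the connecting triple $\tau$ of display~(\ref{rec4}); the fact that a conflict-free spanning tree produces a Hamilton cycle is inherited from \cite{Hcs} rather than re-derived. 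So your argument supplies exactly what the paper leaves implicit (and what the theorem literally asserts), while the paper supplies the concrete witness $H'_k$ that drives Examples~\ref{ex6}--\ref{ex7} and Corollary~\ref{the-end}; the two are complementary rather than competing.

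Two caveats on your side. First, the forward direction is sound, but make the point explicit that connecting arcs of a flip cycle join vertices on \emph{distinct} lists $L(n)$ while flippable-tuple edges lie \emph{within} a single list, so earlier flips cannot have deleted the edge you need at $L(n^{*}_{i+1})$; combined with conflict-freeness (distinct $\Phi$-positions at a shared list) this closes the inductive step. Second, the injectivity claim for $k\ge 6$ is still only a plan: you have not proved edge-disjointness of the connecting arcs of distinct flip cycles, and even granting it, two edge-disjoint flip cycles may share a vertex, so a connected component of $H\triangle\mathcal{C}$ need not be a single flip cycle; recovering the grouping of flippable tuples into hyperedges (hence $T$) from $H$ requires an additional argument, e.g., tracing $H\triangle\mathcal{C}$ as an edge-disjoint union of cycles and using the $\Phi$-positions to resolve shared vertices. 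That is the one genuine hole; note that the paper's own proof does not address the $k\ge 6$ assertion at all, so filling it would be a real addition.
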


\begin{proof} Let $D_k$ be the set of all Dyck words of length $2k$ and, recalling display (\ref{!}), let
\begin{eqnarray}\label{rec1}\begin{array}{lll}
E_2=\{0101\}, \!&\! E_3=S_4,\!&\! E_k=01D_{k-1}, \forall k>3,\\
F_2=\{0011\}, \!&\! F_3=D_3\setminus E_3=\{001101\}.					     \!&\!F_k=D_k\setminus 01D_{k-1}, \forall k>3.
\end{array}\end{eqnarray}
In particular, $0101(01)^{k-2}\in E_k$ and $0011(01)^{k-2}\in F_k$. Now, let
\begin{eqnarray}\label{rec2}\begin{array}{llll}
{\mathcal E}_2=\emptyset, & \mathcal{E}_3=\{S_4\},       & \mathcal{T}_3=\{S_1(\epsilon),S_3\},&\mathcal{E}_k=01\mathcal{T}_{k-1},  \forall k>3,\\
{\mathcal F}_2=\emptyset, & \mathcal{F}_3=\emptyset, & \mathcal{F}_4=\{S_1(01),S_2,0\ul{S}_31,S_1(\epsilon)01\}.&
\end{array}\end{eqnarray}
Let us set $\mathcal{F}_k$ as a function of $\mathcal{E}_2,\ldots,\mathcal{E}_{k-1},\mathcal{F}_2,\ldots,\mathcal{F}_{k-1},\mathcal{T}_{k-2}$, as follows:
For $1<j\le k$, let $F_k^j=\cup_{i=2}^j\{0\ul{u}1v|u\in D_{i-1},v\in D_{k-1}\}$. Since $F_k=F_k^k$, then the following implies the existence of a spanning tree of $H_k[F_k]$.

\begin{lemma} For every $1<j\le k$, there exists a spanning tree $\mathcal{F}_k^j$ of $H_k[F_k^j]$.
\end{lemma}

\begin{proof}
Lemma 7 \cite{Hcs} asserts that if $\tau$ is a flippable tuple and $u,v$ are Dyck words, then: {\bf(i)} $u\tau v$ is a flippable tuple if $|u|$ is even; {\bf(ii)} $u\ul{\tau}v$ is a flippable tuple if $|u|$ is odd. Lemma 8 \cite{Hcs} insures that the collections in (\ref{!}) are flippable tuples. Using those two lemmas of \cite{Hcs}, we define
$\Psi$ as the set of all the flippable tuples $u\tau v$ and $u\ul{\tau}v$ arising from (\ref{!}).
Moreover, we define $\Psi_2=\emptyset$ and $\Psi_{k}=\Psi\cap D_{k}$, for ${k}>2$.

Since $F_{k}^2=0011D_{{k}-2}$, we let $\mathcal{F}_{k}^2=0011\mathcal{T}_{{k}-2}$. Assuming $2<j\le{k}$, since $D_{j-2}=E_{j-1}\cup F_{j-1}$ is a disjoint union, then we have the following partition:
\begin{eqnarray}\label{quelio}F_{k}^j=F_{k}^{j-1}\cup_{v\in D_{{k}-j} }(0\ul{D}_{j-1}1v)=F_{k}^{j-1}\cup_{v\in D_{{k}-j} }((0\ul{E}_{j-1}1v)\cup(0\ul{F}_{j-1}1v)).\end{eqnarray}
For every $v\in D_{{k}-j}$, the elements of $\tau(v)=S_1((01)^{j-3})v\in\Psi_{k}$, are:
\begin{eqnarray}\label{rec3}\begin{array}{ccc}
0(01)^{j-3}001\ul{1}1v\in 0\ul{F}_{j-1}1v, & 0(01)^{j-3}0101\ul{1}v\in 0\ul{E}_{j-1}1v, & 0(01)^{j-3}\ul{0}1101v\in F_{k}^{j-1}.\end{array}\end{eqnarray}
Now, we let
\begin{eqnarray}\label{formula}\mathcal{F}_{k}^j=\mathcal{F}_{k}^{j-1}\cup(\cup_{v\in D_{{k}-j}}(\{\tau(v)\}\cup (0\ul{\mathcal{E}}_{j-1}1v)\cup (0\ul{\mathcal{F}}_{j-1}1v))),\end{eqnarray} which defines a spanning tree of $H_{k}[F_{k}^j]$. \end{proof}

Now, the elements of  $\tau=S_3(01)^{{k}-3}\in\Psi_{k}$ are:
\begin{eqnarray}\label{rec4}\begin{array}{ccc}
00011\ul{1}(01)^{{k}-3}\in F_{k}$, (${k}>3), & 0100\ul{1}1(01)^{{k}-3}\in 01E_{{k}-1}, & \ul{0}10101(01)^{{k}-3}\in 01F_{{k}-1}.\end{array}\end{eqnarray}

The sets $F_{k}$, $01E_{{k}-1}$ and $01F_{{k}-1}$ form a partition of $D_{k}$. We take the spanning trees of the subhypergraphs induced by these three sets and connect them into a single spanning tree of $H_{k}$ by means of the triple $\tau$, that is:
\begin{eqnarray}\label{rec5}H'_{k}=\mathcal{F}_{k}\cup\{\tau\}\cup 01\mathcal{E}_{{k}-1}\cup 01\mathcal{F}_{{k}-1}.\end{eqnarray}\end{proof}

\begin{example} Example~\ref{ex6} uses $H'_3$ in display~(\ref{rec2}), with $S_1(\epsilon)=012$ and $S_3=034$ yielding the hypergraph $H'_3$ depicted over the lower-right enclosure of Table{dia26}. Example~\ref{ex7} uses $H'_{k}$ in display~(\ref{rec5}) for ${k}=4$, $\mathcal{F}_4$ and $\mathcal{E}_3$ in display~(\ref{rec2}) and $\tau$ in display~(\ref{rec4}), with $S_1(01)=67a$, $S_2=875$, $0\ul{S}_31=02a$, $S_1(\epsilon)=146$, being these four triples the elements in $\mathcal{F}_4$; $01S_4=3bcd$, this one as the only element of $01\mathcal{E}_3$, (while $\mathcal{F}_3=\emptyset$); and $\tau=02a$, yielding the hypergraph $H'_4$ depicted in the lower-right enclosure of Table~\ref{uniform}.\end{example}

\begin{corollary}\label{the-end}  To each Hamilton cycle in $O_k$ produced by {\rm Theorem~\ref{L6}} corresponds a Hamilton cycle in $M_k$.
\end{corollary}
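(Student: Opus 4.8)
The plan is to transport the whole flipping construction behind Theorem~\ref{L6} along the $2$-covering $\Psi_k\colon M_k\to O_k$. The organizing principle is the elementary observation that, since $M_k$ is bipartite with parts $L_k,L_{k+1}$ while $\Psi_k$ restricts to the identity on $L_k$ and to $\aleph$ on $L_{k+1}$, each edge of $O_k$ lifts to a pair of edges of $M_k$ joining $L_k$ to $L_{k+1}$; hence a cycle of $O_k$ lifts through $\Psi_k^{-1}$ to a single (double-length) cycle of $M_k$ precisely when it has \emph{odd} length, and to two disjoint copies of itself when its length is even. First I would write the Hamilton cycle supplied by Theorem~\ref{L6} as $H=\mathcal C_O\,\triangle\,Z_1\triangle\cdots\triangle Z_m$ (iterated symmetric difference of edge sets), where $\mathcal C_O=\bigcup_{n=0}^{\mathcal C_k-1}\overrightarrow{C^k_{v(n)}}$ is the uniform $2$-factor of $O_k$ of Section~\ref{1f} and $Z_j$ is the flipping $\kappa_j$-cycle attached to a hyperedge $e_j$ of a conflict-free spanning tree $H'_k$ of $H_k$. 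Each $\overrightarrow{C^k_{v(n)}}$ has odd length $2k+1$, so $\Psi_k^{-1}$ sends it to the single $2(2k+1)$-cycle $\widetilde C^k_{v(n)}:=\Psi_k^{-1}\bigl(\overrightarrow{C^k_{v(n)}}\bigr)$ of Section~\ref{1f}, and $\mathcal C_M:=\Psi_k^{-1}(\mathcal C_O)$ is the uniform $2$-factor of $M_k$. Each $Z_j$ has even length $2\kappa_j$, so $\Psi_k^{-1}(Z_j)$ is a disjoint union of two $2\kappa_j$-cycles, each mapped isomorphically onto $Z_j$; I fix one of them, $\widetilde Z_j$, and put
\[
H_M\ :=\ \mathcal C_M\,\triangle\,\widetilde Z_1\triangle\cdots\triangle\widetilde Z_m .
\]
It is essential to take a single lift of each $Z_j$ rather than its full preimage, since $\mathcal C_O\triangle Z_j$ contains a merged cycle of length $\kappa_j(2k+1)$ which, when $\kappa_j$ is even (as for the $8$-cycle hyperedge already when $k=4$), would lift disconnectedly. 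The asserted correspondence is $H\mapsto H_M$.

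Then I would verify that $H_M$ is a Hamilton cycle of $M_k$ by replaying, \emph{verbatim}, the argument of the proof of Theorem~\ref{L6} that $H'_k$ turns $\mathcal C_O$ into $H$ — now in $M_k$, with $\mathcal C_M$ and the lifted flips. The points to check are: (i) each flippable-tuple edge of $Z_j$ lies in one of the $\kappa_j$ pairwise distinct cycles $\overrightarrow{C^k_{v(n_i)}}$ indexed by the vertices $L(n_i)$ of $e_j$, alternating around $Z_j$ with its $\kappa_j$ connecting edges; (ii) since $\widetilde Z_j\cong Z_j$ and $\widetilde C^k_{v(n_i)}$ carries both lifts of each of its edges, $\widetilde Z_j$ meets each $\widetilde C^k_{v(n_i)}$ in exactly one edge and threads $\widetilde C^k_{v(n_1)},\dots,\widetilde C^k_{v(n_{\kappa_j})}$ in the same cyclic order as $Z_j$ threads the $\overrightarrow{C^k_{v(n_i)}}$, so $\mathcal C_M\triangle\widetilde Z_j$ deletes one edge from each of these $\kappa_j$ cycles and splices the resulting paths into one; (iii) the conflict-free hypotheses — two hyperedges share at most one vertex $L(n)$, and then have distinct $\Phi$-images (display~(\ref{!!})), hence flippable edges at distinct positions of $\overrightarrow{C^k_{v(n)}}$ — lift intact, so the $\widetilde Z_j$ are pairwise edge-disjoint and each successive flip meets $\kappa_j$ \emph{distinct} current components. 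Running the $m$ flips therefore reduces the number of components by $\sum_j(\kappa_j-1)=\mathcal C_k-1$ (as $H'_k$ spans $H_k$), leaving a single cycle; since $|V(H_M)|=2|V(O_k)|=|V(M_k)|$, that cycle is Hamiltonian in $M_k$, and any choice of the lifts $\widetilde Z_j$ works, so the corollary follows for all $k\ge 3$.

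I expect the main obstacle to be steps (ii)–(iii): making rigorous, from the explicit arc-factorization and covering data of Sections~\ref{arcf}--\ref{1f} and the flippable-tuple formalism of Section~\ref{Ham}, that $\Psi_k^{-1}$ carries a flippable tuple of $O_k$ to a genuine flippable configuration of $M_k$ sitting inside a single lifted cycle $\widetilde C^k_v$, and that the ``tree of merges'' reading of a conflict-free spanning tree of $H_k$ (conditions (a)--(b) on $\psi(H_k)$) survives the covering so that the proof of Theorem~\ref{L6} transfers line for line. Everything else — commutation of $\Psi_k^{-1}$ with $\triangle$, the odd-versus-even lifting dichotomy, and the component count — is then routine.
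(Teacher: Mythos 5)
Your proposal is correct and follows essentially the same route as the paper: both transport the flip construction of Theorem~\ref{L6} through the $2$-covering $\Psi_k$, using that the odd uniform-$2$-factor cycles lift to single cycles of doubled length while the flipping cycles are lifted and spliced in sheet by sheet. Your version makes explicit the odd/even lifting dichotomy and the choice of a single lift of each (even) flipping cycle --- points the paper's proof only alludes to with the phrases ``twice as long'' and ``adequate pullback liftings.''
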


\begin{proof}
For each vertical list $L(i)$, let $L^M(i)$ be a corresponding vertical list in $M_k$ which is obtained from $L(i)$. Then, Theorem~\ref{L6} can be adapted to producing Hamilton cycles in the $M_k$ by repeating the argument in its proof in replacing the lists $L(\alpha)$ by lists $L^M(\alpha)$, since they have locally similar behaviors, being the cycles provided by the lists $L^M(\alpha)$ twice as long as the corresponding lists $L(\alpha)$, so the said local behavior happens twice around opposite (rather short) subpaths. Combining Dyck-word triples and quadruples as in display~(\ref{!}) into adequate pullback liftings (of the covering graph map $M_k\rightarrow O_k$ in the lists $L^M(\alpha)$ of those parts of the lists $L(\alpha)$ in which the necessary symmetric differences take place to produce the Hamilton cycles in $O_k$ will produce corresponding Hamilton cycles in $M_k$.
\end{proof}

\end{document}